\definecolor{dkgreen}{rgb}{0,0.6,0}
\definecolor{gray}{rgb}{0.5,0.5,0.5}
\definecolor{mauve}{rgb}{0.58,0,0.82}
\tiny\color{gray},
\definecolor{amaranth}{rgb}{0.9, 0.17, 0.31}
\let\oldtocsection=\tocsection
\let\oldtocsubsection=\tocsubsection
\let\oldtocsubsubsection=\tocsubsubsection
\renewcommand{\tocsection}[2]{\hspace{0em}\oldtocsection{#1}{#2}}
\renewcommand{\tocsubsection}[2]{\hspace{1em}\oldtocsubsection{#1}{#2}}
\renewcommand{\tocsubsubsection}[2]{\hspace{2em}\oldtocsubsubsection{#1}{#2}}
\newtheorem{theorem}{Theorem}[section]
\newtheorem{proposition}[theorem]{Proposition}
\theoremstyle{definition}
\newtheorem{definition}[theorem]{Definition}
\newtheorem{example}[theorem]{Example}
\theoremstyle{remark}
\newtheorem{remark}[theorem]{Remark}
\newtheorem*{notation}{Notation}
\numberwithin{equation}{section}
\numberwithin{figure}{section}
\newcommand{\bbfamily}{\fontencoding{U}\fontfamily{bbold}\selectfont}
\newcommand{\textbb}[1]{{\bbfamily#1}}
\newcommand {\lfor} {\mbox{\textbb{[}}}
\newcommand {\rfor} {\mbox{\textbb{]}}}
\newcommand{\shP} {\mathcal{P}}
\newcommand{\ZZ} {\mathbb{Z}}
\newcommand{\RR} {\mathbb{R}}
\newcommand{\CC} {\mathbb{C}}
\renewcommand{\AA} {\mathbb{A}}
\newcommand {\lra} {\longrightarrow}
\newcommand\restr[2]{{
  \left.\kern-\nulldelimiterspace 
  #1 
  \vphantom{\big|} 
  \right|_{#2} 
  }}
\newcommand {\ul} {\underline}
\newcommand {\ol} {\overline}
\newcommand\C{\mathbb C}
\newcommand\NN{\mathbb N}
\newcommand\PP{\mathbb P}
\newcommand\R{\mathbb R}
\newcommand\Z{\mathbb Z}
\newcommand\cP{\mathcal P}
\newcommand\Spec{\operatorname{Spec}\,}
\newcommand\Hom{\operatorname{Hom}}
\newcommand {\out}  {\mathrm{out}}
\newcommand{\foD} {\mathfrak{D}}
\newcommand{\fod} {\mathfrak{d}}
\newcommand {\fom}  {\mathfrak{m}}
\newcommand {\Sing} {\operatorname{Sing}}
\newtheoremstyle{cited}%
  {3pt}
  {3pt}
  {\itshape}
  {}
  {\bfseries}
  {.}
  {.5em}
  {\thmname{#1} \thmnumber{#2} \thmnote{\normalfont#3}}
\theoremstyle{cited}
\newtheorem{citedthm}{Theorem}
\newcommand {\gp}  {{\operatorname{gp}}}
\newcommand {\scrP}  {\mathscr{P}}
\renewcommand{\P}  {\mathscr{P}}
\newcommand {\Div}  {\operatorname{Div}}
\newcommand{\heart}{\ensuremath\heartsuit}
\begin{document}

\title[The heart of canonical wall structures]{
Equations of mirrors to log Calabi--Yau pairs \\
\small{via the heart of canonical wall structures} \\ }
\author{H\"ulya Arg\"uz}
\address{University of Georgia, Dept of Mathematics, 
Athens, GA 30602}
\email{Hulya.Arguz@uga.edu}

\date{\today}

\maketitle
\vspace{-0.5cm}
\begin{abstract}
Gross and Siebert developed a program for constructing in arbitrary dimension a mirror family to a log Calabi--Yau pair $(X,D)$, consisting of a smooth projective variety $X$ with a normal-crossing anti-canonical divisor $D$ in $X$. 
In this paper, we provide an algorithm to practically compute explicit equations of the mirror family in the case when $X$ is obtained as a blow-up of a toric variety along hypersurfaces in its toric boundary, and $D$ is the strict transform of the toric boundary. The main ingredient is \emph{the heart of the canonical wall structure} associated to such pairs $(X,D)$, which is constructed purely combinatorially, following our previous work with Mark Gross. 
In the case when we blow up a single hypersurface we show that our results agree with previous results computed symplectically by Aroux--Abouzaid--Katzarkov. In the situation when the locus of blow-up is formed by more than a single hypersurface, due to infinitely many walls interacting, writing the equations becomes significantly more challenging. We provide the first examples of explicit equations for mirror families in such situations.

\end{abstract}

\setcounter{tocdepth}{2}
\tableofcontents

\section{Introduction}
\label{introduction}

\subsection{Overview}

Gross and Siebert developed a program for constructing mirror families to Calabi--Yau varieties algebro-geometrically \cite{GSCanScat}. More recently, this construction has been extended to the set up of log Calabi--Yau pairs $(X,D)$, given by a smooth projective variety $X$ along with a reduced normal--crossings anticanonical divisor $D$. 
The construction of the mirror family to $(X,D)$ -- or rather to the complement $X \setminus D$ -- uses a
\emph{canonical wall structure} on an affine manifold with singularities arising as the tropicalization of $(X,D)$ \cite{GSCanScat}. Roughly put, such a structure is a combinatorial gadget incorporating tropical analogues of all rational stable (log) maps to $(X,D)$, with a specified marked point mapping to $D$. 
Such maps, referred to as \emph{$\AA^1$-curves} throughout this paper, give rise to well defined invariants of $(X,D)$, and fit into the more general framework of punctured log Gromov--Witten invariants defined by Abramovich--Chen--Gross--Siebert \cite{ACGSI,ACGSII}. 

For a toric log Calabi--Yau pair $(X_{\Sigma},D_{\Sigma})$, given by a smooth toric variety $X_{\Sigma}$ associated to a complete fan $\Sigma$ in $\RR^n$, along with the toric boundary divisor $D_{\Sigma}$, the construction of the mirror family is pretty straightforward as there are no $\AA^1$-curves in $(X_{\Sigma},D_{\Sigma})$ -- any curve in a toric variety touching the boundary at one point would necessarily touch also other boundary components. Thus, toric log Calabi--Yau pairs $(X_{\Sigma},D_{\Sigma})$ form 
form an immediate class of examples where we know how to write explicit equations for the mirror family. Beyond this, so far there have been very few examples of explicit equations of mirrors. Particularly, in dimension two explicit equations for mirror families to few log Calabi--Yau surfaces surfaces could be computed using computer algebra \cite{barrott2018explicit}, and in dimension three only in one case, a three dimensional analogue of the del Pezzo surface of degree $7$, the mirror is understood \cite{ducat20213}. 

A particular challenge to compute equations of mirror families to log Calabi--Yau pairs $(X,D)$ in bigger generality arises due to the fact
computing counts of $\AA^1$-curves which appear in the construction of the canonical wall structure is technically difficult. In our joint with Mark Gross \cite{AG}, generalizing previous results of Gross--Pandharipande--Siebert \cite{GPS} in dimension two to higher dimensions, we show that for particular log Calabi--Yau pairs $(X,D)$, there is a purely algebraic algorithm to capture the data of $\AA^1$-curves appearing in the construction of the canonical wall structure. Such a log Calabi--Yau pair $(X,D)$, which we study in \cite{AG}, is given by a blow-up
\begin{equation}
\label{Eq: blow up}
    X \longrightarrow X_{\Sigma}
\end{equation}
of a toric log Calabi--Yau pair $(X_\Sigma, D_\Sigma)$ along hypersurfaces of the toric boundary $D_\Sigma$, and where $D$ is the strict transform of $D_{\Sigma}$,  
The algebraic algorithm giving the counts of $\AA^1$-curves of such a pair uses a degeneration of $X$ into the union of the toric variety $X_{\Sigma}$ and some simpler components obtained as blow-ups of $\PP^1$ bundles over the toric boundary. Working with such a degeneration enables us to reduce the complicated enumerative geometry of $(X,D)$ to a toric situation, which amounts to pulling singularities out from the canonical wall structure and working with a simpler wall structure in $\RR^n$. In this paper, we describe a wall structure associated to a log Calabi--Yau pair $(X,D)$, obtained from this simpler wall structure in $\RR^n$, by eliminating from it all classes of curves which are not in $X$. We then show that the resulting wall structure, which we call the \emph{heart of the canonical wall structure associated to $(X,D)$}, produces the correct mirror family as in \cite{GSCanScat}.

The advantage of working with the heart of the canonical wall structure is that it is constructed purely combinatorially, and thus provides a combinatorial recipe to write explicitly equations for mirror families. As a particular application, we write explicit equations of mirrors to three dimensional log Calabi--Yau pairs obtained by non-toric blow-ups of $\PP^3$
along unions of hypersurfaces contained in the toric boundary. This provides the first non-trivial examples of mirror families to log Calabi--Yau pairs in dimension bigger than two. 
In the situation where one considers the blow up of a toric variety along only a single hypersurface the mirror has been constructed earlier in the work of Aroux--Abouzaid--Katzarkov \cite{AAK} using symplectic geometric tools. We prove in \S \ref{Eq AAK mirror} that our mirror construction agrees with the one of \cite{AAK}, restricted to this situation.

\subsection{Background}
Associated to a log Calabi--Yau pair $(X,D)$ is its tropicalization, given by a polyhedral complex $B$ defined similarly as in the two dimensional case in \cite[\S1.2]{GHK}. This polyhedral complex carries the structure of an integral affine manifold with singularities, with singular locus $\Delta\subset B$. To define the canonical wall structure, one first fixes a submonoid $Q\subset N_1(X)$ containing all effective curve classes, where $N_1(X)$ denotes the abelian group generated by projective irreducible curves in $X$ modulo numerical equivalence \cite[Defn 1.8]{GHS}. The canonical wall structure associated to $(X,D)$ is then given by pairs
\[ \foD_{(X,D)} := \{(\fod,f_{\fod})\} \]
of codimension one subsets $\fod \subset B$ called walls, along with attached functions $f_{\fod}$, called wall-crossing functions, that are elements of the completion of $\mathbf{k}[\shP_x^+]$ at the ideal generated by $Q\setminus\{0\}$, where $\shP_x^+=\Lambda_x\times Q$, 
$x\in \mathrm{Int}{\fod}$ is a general point and $\Lambda$ is the local
system of integral vector fields on $B\setminus\Delta$. These functions $f_{\fod}$ are explicitly given by
\begin{equation}
\label{Eq: canonical wall}
    f_{\fod}= \exp(k_{\tau}N_{\tilde\tau}t^{\ul{\beta}} z^{-u})
\end{equation}
where $\tilde\tau=(\tau,\ul{\beta})$ ranges over types of $\dim X-2$-dimensional families
of tropicalizations of $\AA^1$-curves in $(X,D)$ of class $\ul{\beta} \in H_2(X,\ZZ)$. The contact order of the image of such an $\AA^1$-curve is tropically recorded in the tangent vector $u\in \Lambda_x$, for a general point $x\in \fod$, and $k_{\tau}$ is a positive integer depending
only on the tropical type $\tau$ as in \cite[\S2.4]{GSUtah} or 
\cite[(3.10)]{GSCanScat}. The term $t^{\ul{\beta}}z^{-u}$ denotes
the monomial in $\textbf{k}\lfor \Lambda_x \times Q \rfor$ associated to $(-u,\ul{\beta})$, and the number $N_{\tilde\tau}$ is an invariant of $(X,D)$, defined via counts of all $\AA^1$-curves of contact order $u$, and type $\tau$ \cite{ACGSI,ACGSII}.

A key result in our joint work with Mark Gross \cite{AG} shows that when $(X,D)$ is a log Calabi--Yau pair obtained as a blow-up of a toric log Calabi--Yau pair $(X_{\Sigma},D_{\Sigma})$ 
with center a union of general hypersurfaces of the toric boundary, the canonical wall structure can be constructed combinatorially, without using 
the enumerative invariants given by counts of $\AA^1$-curves. We do this by following \cite{GPS}, and considering a degeneration $(\widetilde X, \widetilde D)$ of $(X,D)$ obtained from a blow-up of the degeneration to the normal cone of $X_{\Sigma}$, with general fiber $(X,D)$. We then investigate the canonical wall structure associated to $(\widetilde X, \widetilde D)$, which has support in the tropicalization $\widetilde B$
of $(\widetilde X,\widetilde D)$. This tropicalization comes naturally
with a projection map $\widetilde p: \widetilde B \to \RR_{\geq 0}$.
Hence, we obtain a wall structure $\foD^1_{(\widetilde X, \widetilde D)}$ supported on $\widetilde B_1:= \widetilde p^{-1}(1)$, which is an integral affine manifold with singularities away from the origin. 
Localizing to the origin $0\in \widetilde B_1$ we obtain a wall structure 
\begin{equation}
\label{Eq T0 intro}
T_0\foD^{1}_{(\widetilde X,\widetilde D)}:=\{(T_0\fod, f_{\fod})\,|\,
(\fod,f_{\fod})\in \foD^{1}_{(\widetilde X,\widetilde D)}, \quad 0\in \fod\}
\end{equation}
on $T_0\widetilde B_1$, the tangent space to $0\in \widetilde B_1$. We then relate this wall structure via piecewise linear isomorphisms both to the canonical wall structure associated to $(X,D)$, and to a combinatorially constructed wall structure on $(\RR^n,\Sigma)$. In this paper, following \cite{AG} we take as a starting point the description of $T_0\foD^{1}_{(\widetilde X,\widetilde D)}$. By suitably modifying it to eliminate the classes of curves in $\widetilde X \setminus X$ which appear in the wall crossing functions, we construct the heart of the canonical wall structure. We give a more detailed overview of the construction and its consequences in what follows.

\subsection{Outline of the paper and main results}
One of the objectives of this paper is to provide readers who are not familiar with working with computations using wall-structures many examples, starting from easy ones going to technically involved ones. Therefore, we first review the construction of the coordinate ring $\mathcal{R}_{(X_{\Sigma},D_{\Sigma})}$ of a mirror family to a toric log Calabi--Yau pair $(X_{\Sigma},D_{\Sigma})$, by adopting the general construction of \cite{GSCanScat} to this primitive case where there are no walls -- or all walls carry trivial wall crossing functions given by identity. We explain how the associated ring $\mathcal{R}_{(X_{\Sigma},D_{\Sigma})}$ is generated by \emph{theta functions} (see \S\ref{Sec: mirrors to toric}). We then review the general construction of the theta functions generating the coordinate ring of the mirror to a log Calabi--Yau pair $(X,D)$ using \emph{broken lines} in the canonical wall structure (see \S \ref{sec:canonical scattering}). These are piecewise linear analogues of holomorphic discs on $(X,D)$, given by proper continuous maps $\beta : (-\infty, 0] \to B$
ending at $\beta(0)$, which carry monomials and allow us to trace how these monomials change each time the image of $\beta$ crosses a wall while approaching $\beta(0)$ (see \S\ref{sec: theta functions defined by broken lines}). In \S \ref{Sec: the heart of the canonical wall structure} we introduce the heart of the canonical wall structure, and prove our main result showing that the theta functions generating the mirror family to $(X,D)$ can be defined using broken lines in the heart. In the final section, as the main application of this construction, we compute the equations of the mirror in several three dimensional examples. In the remaining part we provide more details on the results we prove along the way. 

First recall that to describe the canonical wall structure associated to a log Calabi--Yau pair in $(X,D)$, one a priori fixes a monoid $Q \subset N_1(X)$, containing all effective curve classes, and the base of the corresponding mirror family is the formal completion of $\Spec \mathbf{k}[Q]$ at the maximal ideal $Q \setminus \{0\}$. However, it follows from the construction of the mirror family \cite{GHK,GSCanScat}, that it actually lives over a smaller base where $Q$
is replaced by the \emph{relevant monoid } $Q(X,D)$ defined as the set of integral points of the \emph{relevant cone of curves},
 \[\mathcal{C}(X,D) \subset N_1(X) \otimes \RR \]
generated by the union of $\AA^1$-curves in $(X,D)$ and curves in the boundary $D$, see Definition \ref{QXD}.

If $X$ is of dimension two, we show that the relevant cone of curves is simply the Mori cone of effective curves:

\begin{citedthm}[(=Theorem \ref{Theorem relevant Mori})]
Let $(X,D)$ be a generic log Calabi--Yau pair as in Definition \ref{Def: generic log CY}. Then, the 
relevant cone of curves $\mathcal{C}(X,D)$ in Definition \ref{QXD} is isomorphic to the Mori cone $\mathrm{NE}(X)$. 
\end{citedthm}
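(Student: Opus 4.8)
The plan is to prove the two inclusions of cones in $N_1(X)\otimes\RR$. The inclusion $\mathcal{C}(X,D)\subseteq\mathrm{NE}(X)$ is immediate: the $\AA^1$-curves and the irreducible components of $D$ are effective curves, so their classes, and the closed convex cone they generate, lie in the Mori cone. Thus the content is the reverse inclusion $\mathrm{NE}(X)\subseteq\mathcal{C}(X,D)$, and since $\mathrm{NE}(X)$ is generated as a closed convex cone by classes of irreducible curves, it suffices to show $[C]\in\mathcal{C}(X,D)$ for every irreducible curve $C\subset X$. If $C$ is a component of $D$ this holds by definition, so assume $C\not\subseteq D$; then $C\cdot D\geq 0$ because $D$ is effective. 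I would then split into the cases $C^2<0$ and $C^2\geq 0$.

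Case $C^2<0$. Since $D=-K_X$, adjunction gives $2p_a(C)-2=C^2+K_X\cdot C=C^2-C\cdot D$; with $C^2<0$, $C\cdot D\geq 0$ and $p_a(C)\geq 0$ this forces $p_a(C)=0$ and $C\cdot D\in\{0,1\}$. If $C\cdot D=0$ then $C$ is a complete curve in the interior $X\setminus D$; as $X_\Sigma\setminus D_\Sigma\cong(\CC^*)^2$ contains no complete curve and the blow-up centers are taken generically in the sense of Definition~\ref{Def: generic log CY}, this does not occur. So $C\cdot D=1$ and $p_a(C)=0$, i.e.\ $C$ is a smooth rational curve meeting $D$ transversally in one point, which is necessarily a smooth point of $D$ (passing through a node of $D$ would give $C\cdot D\geq 2$). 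Hence $C$ is an $\AA^1$-curve and $[C]\in\mathcal{C}(X,D)$.

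Case $C^2\geq 0$. Here I would use the toric birational morphism $\pi\colon X\to X_\Sigma$ defining $(X,D)$, together with the classical fact that $\mathrm{NE}(X_\Sigma)=\sum_i\RR_{\geq 0}[D_{\Sigma,i}]$ is generated by the torus-invariant curves. Genericity of the centers gives that each exceptional curve $E_j$ is an $\AA^1$-curve and $\pi^{*}[D_{\Sigma,i}]=[D_i]+\sum_{j:\,p_j\in D_{\Sigma,i}}[E_j]$, where $D_i$ is the strict transform of $D_{\Sigma,i}$; hence $\pi^{*}\mathrm{NE}(X_\Sigma)+\sum_j\RR_{\geq 0}[E_j]\subseteq\mathcal{C}(X,D)$. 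Combining this with the standard description of the Mori cone of a surface --- $\mathrm{NE}(X)$ is the closure of $\overline{P_X}+\sum_{C^2<0}\RR_{\geq 0}[C]$, where $\overline{P_X}=\{x:x^2\geq 0,\ x\cdot A\geq 0\}$ is the positive cone ($A$ ample) and the sum runs over irreducible curves of negative self-intersection --- and with the previous case (which puts every such negative ray in $\mathcal{C}(X,D)$), the problem reduces to showing $\overline{P_X}\subseteq\mathcal{C}(X,D)$. For this one takes $x\in\overline{P_X}$, pushes it forward to $\pi_{*}x\in\mathrm{NE}(X_\Sigma)=\sum_i\RR_{\geq 0}[D_{\Sigma,i}]$, chooses an effective toric decomposition of $\pi_{*}x$, lifts it along $\pi^{*}$, and absorbs the exceptional-divisor terms, using the freedom in the toric decomposition and the genericity of the centers (which, if necessary, also produces rational curves highly tangent to $D$ as further $\AA^1$-curves).

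The main obstacle is precisely this last step. The naive lift along $\pi$ fails, because the strict transform of a curve through a blown-up point picks up a \emph{negative} multiple of the corresponding exceptional class; these defects must be balanced against the exceptional classes hidden in $\pi^{*}[D_{\Sigma,i}]$, and showing this can always be done is exactly where the genericity hypothesis of Definition~\ref{Def: generic log CY} is essential. That same hypothesis is what rules out complete curves in the interior $X\setminus D$ used in the case $C^2<0$, as well as other potential obstructions. Once $\overline{P_X}\subseteq\mathcal{C}(X,D)$ is established, the two inclusions combine to give $\mathcal{C}(X,D)=\mathrm{NE}(X)$.
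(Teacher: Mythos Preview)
Your proposal has two genuine gaps. First, you repeatedly invoke a toric birational morphism $\pi\colon X\to X_\Sigma$ ``defining $(X,D)$'', but the hypothesis of the theorem is only that $(X,D)$ is a generic two-dimensional log Calabi--Yau pair in the sense of Definition~\ref{Def: generic log CY}; no toric blow-up presentation is assumed. Both your treatment of the subcase $C\cdot D=0$ (where you appeal to $X_\Sigma\setminus D_\Sigma\cong(\CC^*)^2$) and your entire $C^2\geq 0$ case rest on this unjustified assumption. Second, even granting such a presentation, you yourself flag that the final step --- showing $\overline{P_X}\subseteq\mathcal{C}(X,D)$ by lifting toric decompositions along $\pi^*$ and absorbing exceptional defects --- is not carried out; this is not a technicality but the heart of the matter, and your sketch does not indicate how genericity would actually close it.

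The paper's argument avoids both issues by organising the case analysis around the sign of $C\cdot K_X$ rather than of $C^2$. If $C\cdot K_X>0$ then $C\cdot D<0$ forces $C\subseteq D$, so $C$ is a boundary curve. If $C\cdot K_X=0$ then $C$ is disjoint from $D$ (as $C\not\subseteq D$ and $C\cdot D=0$), hence $\mathcal{O}_X(C)\in D^\perp$ has $\phi_X(\mathcal{O}_X(C))=1$, contradicting genericity directly via the period map --- this is the correct way to use Definition~\ref{Def: generic log CY}, and it replaces your incomplete appeal to the toric interior. If $C\cdot K_X<0$, Mori's cone theorem says the $K_X$-negative part of $\mathrm{NE}(X)$ is generated by extremal rays, which on a surface other than $\PP^2$ or a Hirzebruch surface are classes of $(-1)$-curves; by adjunction each such curve is rational with $C\cdot D=1$, hence an $\AA^1$-curve. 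This last step is what entirely supplants your unfinished $C^2\geq 0$ analysis: rather than decomposing big classes by hand, one lets the cone theorem produce the generators.
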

The generalization of  Theorem \ref{Theorem relevant Mori} to higher dimensions is wrong -- see Remark \ref{Rem: cant go higher} for a counterexample.

Now assume we are given a log Calabi--Yau pair $(X,D)$ obtained by a blow-up from a toric log Calabi--Yau pair. 
Comparing the cones of relevant curves associated to $(X,D)$ and its degeneration $(\widetilde{X}, \widetilde{D})$ discussed above, we see that $Q(\widetilde{X},\widetilde{D})$ is contained in the monoid generated by the union of $Q(X,D)$, the fiber classes $\pm F_i$'s and classes of exceptional curves $\pm E_i^j$'s. Moreover as there are no relations between the fiber classes $F_i's$ and the classes in $Q(X,D)$, we have a well defined morphism of monoids $Q(\widetilde{X},\widetilde{D}) \to Q(X,D)$ given by setting $ \pm F_i=0$. Hence, by setting all the classes $ \pm F_i=0$ in the wall structure $T_0\foD^{1}_{(\widetilde X,\widetilde D)}$, we obtain a consistent wall structure defined over the localization of $Q(X,D)$ at classes of exceptional curves  (see Definition \ref{Def: M}). We call this wall structure the \emph{heart of the canonical wall structure associated to $(X,D)$} and denote it by $\foD^{\heart}_{(X,D)}$ -- see Definition \ref{def: heart}. 

The advantage of passing to the heart of the canonical wall structure is that it is supported on $\RR^n$ rather than $B$ which carries affine singularities. Particularly, keeping track of broken lines in $\foD^{\heart}_{(X,D)}$ is more convenient. 
Our main result shows that the broken lines on the heart of the canonical wall structure $\foD^{\heart}_{(X,D)}$ define the correct theta functions generating the mirror to $(X,D)$:
\begin{citedthm}[(= Theorem \ref{thm: heart})]
The ring of theta functions defined by broken lines in $\foD_{(X,D)}^{\heart}$ is isomorphic to the coordinate ring of the mirror to $(X,D)$.
\end{citedthm}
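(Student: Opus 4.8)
The plan is to deduce the statement from the comparison of wall structures established in \cite{AG} together with the fact, due to \cite{GSCanScat}, that the ring of theta functions built from broken lines in the canonical wall structure $\foD_{(X,D)}$ on $B$ is the coordinate ring of the mirror to $(X,D)$ --- a family which, as recalled above, already descends from $\Spec\mathbf{k}[Q]$ to $\Spec\mathbf{k}[Q(X,D)]$, where $Q(X,D)$ is the relevant monoid of Definition~\ref{QXD}. Concretely, $\foD^{\heart}_{(X,D)}$ is obtained from $T_0\foD^{1}_{(\widetilde X,\widetilde D)}$ by the base change $\pi$ that kills the fibre classes $\pm F_i$, and by \cite{AG} the latter wall structure is, up to a piecewise-linear isomorphism that absorbs the affine singularities of $B$ into the walls, identified with $\foD_{(X,D)}$ after the very same pushforward of curve classes. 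Thus it suffices to show that the broken-line construction of theta functions is compatible with (a) the base change $\pi$ and (b) the piecewise-linear isomorphism of \cite{AG}; granting this, the ring of theta functions of $\foD^{\heart}_{(X,D)}$ is identified with that of $\foD_{(X,D)}$, hence with the mirror coordinate ring.

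For (a), I would first note that $\pi\colon Q(\widetilde X,\widetilde D)\to Q(X,D)_{\mathrm{loc}}$ is a well-defined morphism of monoids, because there are no relations between the fibre classes and the classes in $Q(X,D)$ (Definition~\ref{Def: M}); it is compatible with the natural filtrations, so it induces a continuous ring homomorphism of the relevant completions, applying which term by term to the wall-crossing functions of $T_0\foD^{1}_{(\widetilde X,\widetilde D)}$ produces $\foD^{\heart}_{(X,D)}$. Consistency passes to $\foD^{\heart}_{(X,D)}$: around each joint, consistency is the identity $\prod\theta_{\fod}=\mathrm{id}$ in an automorphism group functorial in the coefficient ring, and the $\maxid$-adic set-up still makes this (now possibly infinite) ordered product converge. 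For broken lines, the key observation is that $f^{\heart}_{\fod}=\pi(f_{\fod})$ and that $\pi$ acts only on curve-class exponents, not on $\Lambda$-components: every monomial occurring in a power of $f^{\heart}_{\fod}$ is the $\pi$-image of a sum of monomials, with the same $\Lambda$-component, occurring in the corresponding power of $f_{\fod}$, and the bending rule commutes with $\pi$ since the wall-crossing automorphisms do. Propagating this along a broken line, a broken line $\delta$ in $\foD^{\heart}_{(X,D)}$ carries exactly $\pi$ applied to the sum of the monomials carried by all broken lines of $T_0\foD^{1}_{(\widetilde X,\widetilde D)}$ that lift $\delta$; summing over all broken lines with a fixed asymptotic direction shows that the ring of theta functions of $\foD^{\heart}_{(X,D)}$ is the image under $\pi$ of the ring of theta functions of $T_0\foD^{1}_{(\widetilde X,\widetilde D)}$, with basepoint-independence and the ring structure inherited. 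Moreover each resulting theta function, a priori defined over the localization of $\mathbf{k}[Q(X,D)]$ at the exceptional classes, actually has coefficients in $\mathbf{k}[Q(X,D)]$, since under the identification of \cite{AG} it agrees with a theta function of $\foD_{(X,D)}$, which by \cite{GSCanScat} and the relevant-monoid reduction has coefficients there.

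For (b), the piecewise-linear isomorphisms of \cite{AG} carry walls to walls and wall-crossing functions to wall-crossing functions and are integral affine away from the singular loci; since broken lines in $\foD_{(X,D)}$ are generic and, by definition, disjoint from the singular locus $\Delta\subset B$, they are transported bijectively --- together with the monomials they carry --- to broken lines on $\RR^n$, and vice versa. Combining (a) and (b) with \cite{GSCanScat} then identifies the ring of theta functions of $\foD^{\heart}_{(X,D)}$ with the coordinate ring of the mirror to $(X,D)$, as claimed.

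The step I expect to be the main obstacle is the transport of broken lines in (b): the identification of \cite{AG} is not affine, it trades the affine singularities of $B$ for the flat manifold $\RR^n$ underlying $\foD^{\heart}_{(X,D)}$ by absorbing $\Delta$ into the walls, so one must check carefully that this change of charts neither creates nor destroys broken lines nor alters the monomials they carry, and that all the bookkeeping stays finite modulo each power of $\maxid$ even though infinitely many walls already interact once more than one hypersurface is blown up. Pinning this down is where the explicit combinatorial description of $T_0\foD^{1}_{(\widetilde X,\widetilde D)}$ from \cite{AG} and the $\maxid$-adic convergence of the wall-crossing functions enter in an essential way.
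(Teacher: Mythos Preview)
Your step (a) is fine and matches the paper: setting $F_i=0$ is a monoid homomorphism that pushes forward wall functions, consistency, and broken lines. The problem is step (b), which you yourself flag as the main obstacle but do not actually overcome. The piecewise-linear isomorphism $\mu$ (or $\Upsilon$) of \cite{AG} is \emph{only} piecewise linear with respect to $\Sigma$: its derivative jumps precisely along those codimension-one cones $\rho$ of $\Sigma$ that contain a ray $\rho_i$, and these are exactly the cones that carry the incoming walls and across which the affine monodromy of $B$ is concentrated. A broken line in $\foD^{\heart}_{(X,D)}$ that crosses such a $\rho$ has its direction vector transformed by two different linear maps on the two sides, so the naive pushforward need not be a broken line in $\foD_{(X,D)}$ at all; the appeal to ``disjoint from $\Delta$'' does not help, since $\Delta$ on the $B$ side sits at the origin while the discontinuity of $d\mu$ sits along the full cone $\rho$. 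In \cite{AG} the map $\Upsilon$ is used to match wall structures, with a delicate recipe (different for incoming and non-incoming walls) for how to transform the attached functions; no statement about transport of broken lines is proved there, and there is no reason to expect one.

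The paper's proof avoids this issue by never transporting broken lines through $\mu$. Instead it works inside the single wall structure $\foD^{1,F_i=0}_{(\widetilde X,\widetilde D)}$ on $\widetilde B_1$, whose localization at the origin is $\foD^{\heart}_{(X,D)}$ and whose asymptotic part is $\foD_{(X,D)}$. It then takes a general point $p$ and radially rescales the endpoint $sp$: by consistency the theta functions at different $s$ are related by parallel transport, and the radiant property of the wall structure (Theorem~\ref{Thm rays}) guarantees that, away from the discriminant locus, broken lines simply rescale. The genuine content is the local analysis at the discriminant locus (Proposition~\ref{Prop: monodormy} together with the relation \eqref{Eq: relating fs} between $f_y$ and $f_{y'}$ and the kink relation \eqref{eq kinks change}), which shows that the set of possible continuations of a broken line after crossing $\rho_0$ is exactly the parallel transport of the set of continuations after crossing $\rho_\infty$. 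This gives a continuous, bijective deformation of broken lines from $s\gg 1$ (computing $\foD_{(X,D)}$) to $s\ll 1$ (computing $\foD^{\heart}_{(X,D)}$). That parallel-transport calculation is the missing idea in your argument; without it, step (b) is a genuine gap.
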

As a particular application using the heart of the canonical wall structure we compute explicit equations for mirror families to a log Calabi--Yau pair $(X,D)$ in dimension three, obtained as the blow-up of $\PP^3$ along a disjoint union of hypersurfaces. In the case when we consider the blow-up along more than one hypersurface, writing an explicit equation for the mirror is significantly challenging, since the walls formed using the tropicalizations of the hypersurfaces intersect and at each such intersection there are new walls formed. We show that even in the simplest case, when the center of blow-up is a union of two disjoint lines, we have infinitely many new walls. Nonetheless, we observe that the product of the wall crossing functions on these walls converge and we obtain concrete equations for the mirror -- see \S\ref{Sec: examples}.


We prove that the equation for the mirror, in the situation when one considers the blow-up of a toric variety along a single hypersurface, agrees with previous results of Abouzaid--Auroux--Katzarkov computed from the symplectic point of view \cite[Theorem~1.5]{AAK}:

\begin{citedthm}[(= Theorem \ref{Thm AAK agrees AG})]
Let $X$ be the blow-up of a toric variety along a hypersurface $H$ of its toric boundary and $D$ be the strict transform of the toric boundary divisor. Let $E$ be the class of an exceptional fiber over $H$. Then, the restriction of the mirror $Y\to \mathrm{Spec}\mathbf{k}[Q(X,D)]$ to the locus $\CC^* = \mathrm{Spec}\CC[t^{\pm E}] \subset \mathrm{Spec}\mathbf{k}[Q(X,D)]$ is isomorphic to the mirror constructed in \cite{AAK}.
\end{citedthm}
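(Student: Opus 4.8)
The plan is to match both mirror families by producing explicit coordinate rings on each side and comparing them along the stated $\CC^*$-locus. On the Gross--Siebert side, the coordinate ring of the mirror to $(X,D)$ is, by Theorem \ref{thm: heart}, generated by theta functions computed from broken lines in the heart $\foD^{\heart}_{(X,D)}$, which in the single-hypersurface case is supported on $\RR^n$ with only finitely many ``incoming'' walls: one wall emanating in each direction associated to the hypersurface $H\subset D_\rho$ for the relevant ray $\rho$ of the fan $\Sigma$, with wall-crossing function of the form $1+t^E z^{-v_\rho}$ (or a suitable product over the intersection points of $H$ with the lower-dimensional strata). The first step is therefore to run the combinatorial scattering algorithm in this case, observe that the resulting consistent wall structure is essentially the standard ``one-wall'' (or finitely many parallel walls) scattering, and write down the finitely many theta functions $\vartheta_m$ explicitly as Laurent polynomials in $z^{m'}$ with coefficients in $\mathbf{k}[Q(X,D)]$.

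Second, I would recall the presentation of the AAK mirror from \cite[Theorem~1.5]{AAK}: it is a hypersurface (or Landau--Ginzburg-type total space) in $(\CC^*)^{n-1}\times \CC^2$ cut out by an equation of the shape $uv = f_H(z_1,\dots,z_{n-1})$, where $f_H$ is a defining Laurent polynomial of the hypersurface $H$ in the torus of the toric boundary stratum, with the Novikov/Kähler parameter tracking exactly the class $E$ of the exceptional fiber over $H$. Specializing the Gross--Siebert side to $\Spec\CC[t^{\pm E}]$ means setting all other curve-class parameters in $Q(X,D)$ to their ``boundary'' values; under this specialization the only surviving contributions to the broken-line counts are those crossing the $H$-walls, so the structure constants of the theta-function algebra collapse to exactly the coefficients of $f_H$, and the two distinguished theta functions playing the role of $u$ and $v$ satisfy precisely the relation $\vartheta_u\vartheta_v = \sum (\text{monomials from } H) = f_H$, up to the $t^E$ normalization. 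The identification of variables is dictated by the fan: the $(\CC^*)^{n-1}$ factor corresponds to theta functions coming from the unbounded directions of $B$ transverse to the wall, and $u,v$ to the two theta functions associated to the primitive generators $\pm v_\rho$ modified by the wall.

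Third, I would check compatibility of the bases: that the ring generated by the theta functions on the heart is freely generated as a $\mathbf{k}[Q(X,D)]$-module by $\{\vartheta_m\}_{m\in B(\ZZ)}$ (this is part of the general structure theory quoted from \cite{GSCanScat} and re-derived for the heart in Theorem \ref{thm: heart}), so that after specialization we get a free $\CC[t^{\pm E}]$-module with the same basis; then exhibiting the single defining relation $\vartheta_u\vartheta_v - f_H(\vartheta_{e_1},\dots,\vartheta_{e_{n-1}}) = 0$ among generators shows that $\Spec$ of this ring is the AAK hypersurface. Finally one records that the $\CC^*$-actions and the maps to the respective bases match, so the isomorphism is over $\Spec\CC[t^{\pm E}]$ as claimed.

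The main obstacle I expect is the bookkeeping in the broken-line computation: one must verify that, when $H$ meets several lower-dimensional toric strata, the finitely many walls produced by the scattering, together with the broken lines bending across them, reproduce \emph{exactly} the polynomial $f_H$ --- including signs and the correct exponents of $t^E$ --- rather than some polynomial with the same Newton polytope but different coefficients. This is where the precise normalization $k_\tau N_{\tilde\tau}$ in \eqref{Eq: canonical wall} and the genericity hypothesis on $H$ enter, and it requires carefully tracking how each monomial of $f_H$ arises from a unique broken line. A secondary, more routine difficulty is spelling out the piecewise-linear change of coordinates (already established in \cite{AG}) identifying the tropicalization $B$ near the relevant stratum with the domain $\RR^n$ on which $\foD^{\heart}_{(X,D)}$ lives, so that the theta functions on the two sides are literally the same functions after specialization; I would isolate this as a lemma and defer its verification to the explicit examples in \S\ref{Sec: examples}, which already carry out the $n=3$ instance in full.
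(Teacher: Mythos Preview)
Your overall strategy—compute explicit generators for both coordinate rings and match them after restricting to $\Spec\CC[t^{\pm E}]$—is the same as the paper's. But there are two substantive discrepancies.

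First, you mischaracterize the AAK side. In the constant-family situation treated here (blow-up of $\PP^1\times V$ along $\{0\}\times H$), the AAK mirror $Y^\circ$ is \emph{not} presented as a conic bundle $uv=f_H$; rather, it is the complement of $w_0^{-1}(0)$ inside the affine toric variety $Y$ whose moment polytope is the overgraph $\{\eta\ge\varphi_H(\xi)\}$ of the PL function $\varphi_H$. The generators of $\CC[Y^\circ]$ are the toric monomials $v_0=z^{(1,0)}$, $v_i=z^{(\varphi_H(m_i),m_i)}$, together with $(-t^E+t^Ev_0)^{-1}$. The defining polynomial $f_H$ never appears—only the intersection numbers $\kappa_\tau=H\cdot C_\tau$, through $\varphi_H$. (The conic-bundle description $uv=f_H$ in \cite{AAK} is that of $X^0$, the variety whose mirror is being sought, not of $Y^\circ$.) So your program of ``recovering $f_H$ with the correct coefficients'' from broken lines is aiming at the wrong target.

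Second, you overstate the difficulty of the heart side and miss the key step. In the $\PP^1\times V$ setup every wall of $\foD^\heart_{(X,D)}$ has direction $(1,0)$; the walls are $\RR\oplus\tau$ with function $(1+t^{-E}z^{-(1,0)})^{\kappa_\tau}$, they are pairwise parallel, and there is \emph{no scattering at all}. The theta functions are therefore immediate:
\[
\vartheta_0=z^{-(1,0)},\qquad \vartheta_0'=z^{(1,0)},\qquad \vartheta_i=z^{(0,m_i)}\prod_j(1+t^{-E}z^{-(1,0)})^{(n_{\tau_j},m_i)\kappa_{\tau_j}}.
\]
The crucial idea you do not anticipate is that both $\CC[Y^\circ]$ and $\mathcal{R}_{(X,D)}$ embed in the same rational function field $\CC(\ZZ\oplus\ZZ^n)$, and the single automorphism
\[
z^{(1,0)}\longmapsto 1+t^{-E}z^{-(1,0)},\qquad z^{(0,m)}\longmapsto z^{(0,m)}
\]
sends $v_0\mapsto 1+t^{-E}\vartheta_0$, $v_i\mapsto\vartheta_i$, and $w_0^{-1}\mapsto\vartheta_0'$, hence restricts to an algebra isomorphism. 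This one-line birational change of coordinates replaces the hypersurface-relation matching you propose, and avoids the coefficient bookkeeping entirely.
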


We note that in some situations the pairs $(X,D)$ obtained by a blow-up of $\PP^3$ with center a disjoint union of hypersurfaces of degrees $d_1$ and $d_2$ are Fano (for instance when $d_1=d_2=1$, or $d_1=d_2=2$). In these cases, the sum of the theta functions we compute, which generate the mirror family, agree with the Landau--Ginzburg superpotential as computed by Coates--Corti--Galking--Kasprzyk \cite{coates2016quantum} (see Remark \ref{Rem CC}). This verifies that the mirror families we compute in these situations are the ones expected from the point of view of Landau--Ginzburg mirror symmetry.


\subsection{Acknowledgements} I thank Dan Abramovich, Pierrick Bousseau, Tom Coates, Mark Gross, and Bernd Siebert for many useful discussions. I am particularly grateful to Tom Coates, who provided the magma code to carry the wall crossing computations in dimension three. During the preparation of this paper, I received funding
from the European Research Council (ERC) under the European Union’s Horizon 2020
research and innovation programme (grant agreement No. 682603), from Fondation
Math\'ematique Jacques Hadamard and from IST Austria.

\textbf{Conventions.} For any variety $X$, we denote by $N_1(X)$ the abelian group generated by projective irreducible curves in $X$ modulo numerical equivalence. Moreover, we denote by $\mathrm{NE}(X) \subset N_1(X) \otimes_{\ZZ} \RR$ the Mori cone, which is the cone generated by effective curves. We use the notation $\langle \rho_1, \ldots ,\rho_n \rangle$
for a cone in $\RR^n$ whose set of ray generators is $\{ \rho_1, \ldots ,\rho_n \}$. 

\section{Mirrors to log Calabi--Yau pairs: the toric case}
\label{Sec: mirrors to toric}
In this section we review the construction of the mirror to  log Calabi--Yau pairs in the context of the Gross--Siebert program \cite{GSCanScat,GHS}, by restricting attention to \emph{toric log Calabi--Yau pairs} $(X_{\Sigma}$,$D_{\Sigma})$ given by an $n$-dimensional toric variety $X_{\Sigma}$ associated to a complete toric fan $\Sigma \subset M_{\RR}$, and the toric boundary divisor $D_{\Sigma}$, that is, the anti-canonical divisor formed by the union of divisors that are invariant under the torus action. To construct the mirror to such a pair we need the following data:
\begin{itemize}
    \item The \emph{tropicalization} of $(X_{\Sigma}$,$D_{\Sigma})$: this is given by the pair $(\RR^n,\Sigma)$,
where $\Sigma$ is naturally viewed as a \emph{polyhedral subdecomposition} of $\RR^n$.
    \item The monoid $Q$ of integral points of the Mori cone $\mathrm{NE}(X_{\Sigma})$, and a convex piecewise linear (PL) function $\overline{\varphi}: \RR^n \rightarrow Q^{\gp}_\RR$, that is, a function whose restriction to each maximal cone of $\Sigma$ is a linear function. Such a function is uniquely determined, up to a linear function, by specifying its \emph{kinks} along codimension one cells of $\Sigma$. For any codimension one cell $\rho$ of $\Sigma$ the kink of $\overline{\varphi}$ which we denote by $\kappa_{\rho}$, up to a choice of sign, is given by the change of slopes of the restriction of $\overline{\varphi}$ to the maximal cells adjacent to $\rho$ -- see \cite[Def.\ 1.6, Prop.\ 1.9]{GHS}. There is a canonical choice for the kinks of $\overline{\varphi}$, which we use in what follows, given along each codimension one cell $\rho$ by the corresponding curve class in $X_{\Sigma}$. In general, by the assumption of convexity of $\overline{\varphi}$ we ensure the kinks are elements of $Q$, rather than $Q^{\gp }$ -- see \cite[Definition~1.10]{GHS}
\end{itemize}
\begin{example}
Let $X_{\Sigma}$ be the complex projective plane $\PP^2$. The Mori cone in this case is given by $Q = \NN = \langle [L] \rangle$. The three rays in $\Sigma$ of the toric fan correspond to lines in $\PP^2$, for which we denote the associated curve class by $[L]$. Let $\overline{\varphi}$ be the PL function defined by 

\begin{equation}
\label{Eq: PL vanishing at positive octant P2}
\overline{\varphi}(x,y) = \begin{cases} 
      0 & \mathrm{on} ~ \langle (1,0),(0,1) \rangle \\
      -y [L] & \mathrm{on} ~ \langle (1,0),(-1,-1) \rangle  \\ 
      -x [L] & \mathrm{on} ~ \langle (0,1),(-1,-1) \rangle 
   \end{cases}
\end{equation}
The PL function $\overline{\varphi}$ has kinks $[L]$ along each of the rays of $\Sigma$. We note that specifying the kinks along each ray, determines uniquely $\overline{\varphi}$ only up to a linear function, as we can always add a multiple of a linear function and the kinks of the resulting PL function will still be the same. However, note that in addition to specifying the kinks, if we ask the PL function to vanish at a given maximal cell, then the choice is unique. We illustrate the three PL functions with kinks $L$, and which vanish on a maximal cell in Figure \ref{Fig: PL}. 
\begin{figure}
\resizebox{.9\linewidth}{!}{\input{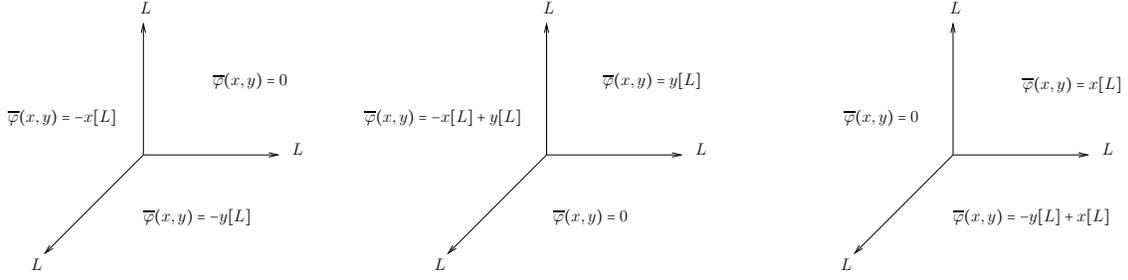}}
\caption{The possible $Q$-valued PL functions on the fan $\Sigma$ of $\PP^2$ with kinks $L$, and which vanish along a maximal cone.}
\label{Fig: PL}
\end{figure}
\end{example}

In the remaining part of this section, by applying the general recipe developed in \cite{GSCanScat} to toric varieties, we explain how to construct the mirror family to a toric log Calabi--Yau pair $(X_{\Sigma},D_{\Sigma})$ as an affine toric variety. In this situation the mirror arises as a family with total space a toric variety, whose momentum map image is given by the polytope formed by the upper convex hull of the graph of $\overline{\varphi}$. More precisely, we define the monoid $P$ of integral points lying above the graph of $\overline{\varphi}$ by
\[ P   := \{(m, \overline{\varphi}(m) +q) ~ | ~ m ~\in M, q \in Q \} \subset M \oplus Q^{\gp} \]
The natural inclusion $Q \hookrightarrow P$ gives rise to a family
\[  \Spec \mathbf{k}[P]  \longrightarrow  \Spec \mathbf{k}[Q],  \]
which is declared to be the mirror family to $(X_{\Sigma}$,$D_{\Sigma})$. Here, the ring $\mathbf{k}[P]$ is called \emph{the ring of theta functions} \cite{GHS}. Indeed, for any integral point $m \in M$, we have a regular function, referred to as a theta function, 
\[\vartheta_m =  z^{(m , \overline \varphi(m) )} \in \mathbf{k}[P]\] on $\Spec \mathbf{k}[P]$. Moreover, the set of theta functions $\{ \vartheta_m \}_{m \in M}$ form a basis for $\mathbf{k}[P] $ as a $\mathbf{k}[Q]$ module. In the situation when the toric variety $X_{\Sigma}$ is smooth, a generating set for  $\mathbf{k}[P]$ as a $\mathbf{k}[Q]$ algebra is given by particular theta functions $\{\vartheta_{m_i}\}_{i\in I}$, where the set of vectors $\{ m_i ~ | ~ i \in I\}$ correspond to the set of primitive generators of rays of the fan $\Sigma$. To write these functions, we fix a general point $p \in M_{\RR}$ contained in the interior of a maximal cell of $\Sigma$, and define $\overline{\varphi}$ to be the PL function which vanishes in the maximal cell containing $p$. Then, we set
\begin{equation}
    \label{Eq: vartheta}
    \vartheta_{m_i}(p) := z^{(m_i, \overline{\varphi}(m_i) )} = z^{m_i} t^{\overline{\varphi}(m_i)} \in \mathbf{k}[P] = \mathbf{k}[M \oplus Q^{\gp}].
\end{equation}
Here we denote for the element $(m_i, \overline{\varphi}(m_i) ) \in P$, the corresponding element in the monid algebra by $z^{(m_i, \overline{\varphi}(m_i) )} \in \mathbf{k}[P]$. Note that we have a natural splitting $P = M \oplus Q^{\gp}$ since the point $p$ is chosen in the interior of a maximal cell $\Sigma$. To distinguish between the elements of the monoid algebras associated to $M$ and $Q^{\gp}$, following the notational convention of \cite{AG} for $m \in M$ we denote the corresponding element in the monoid algebra by $z^m \in \mathbf{k}[M]$, and for $q \in Q^{\gp}$ the corresponding element in the monoid algebra is $t^q \in \mathbf{k}[Q^{\gp}]$.

\begin{example}
\label{Ew: mirror to P2}
For $X_{\Sigma} = \PP^2$, recall we have $Q:=\NN = \langle L \rangle$, where $[L]$ is the class of a line in $\PP^2$. We let $p\in \Sigma$ be a point in the positive octant. Then the Q-valued PL function $\overline{\varphi}$ vanishing at $p$ is defined in \eqref{Eq: PL vanishing at positive octant P2}. For the generators $(1,0)$ and $(0,1)$ of the monoid $M = \ZZ^2$, we denote the corresponding elements in the monoid algebra $\mathbf{k}[M]$ by
\[  z^{(1,0)}=x, ~ \mathrm{and} ~ z^{(0,1)} =y \,. \]
Then, by \eqref{Eq: vartheta}, the theta functions generating the coordinate ring for the mirror to $(X_{\Sigma},D_{\Sigma})$ in this case are given by
\begin{align}
\label{Eq: thetas for P2}
    \vartheta_{(1,0)} & = z^{(1,0)} t^{\overline{\varphi}(1,0)} = x \,, \\
    \nonumber
    \vartheta_{(0,1)} & = z^{(0,1)} t^{\overline{\varphi}(0,1)} =  y \,,  \\
\nonumber
 \vartheta_{(-1,-1)} & = z^{(-1,-1)} t^{\overline{\varphi}(-1,-1)} = x^{-1}y^{-1}  t^{[L]} \,.
\end{align}
It follows from \eqref{Eq: thetas for P2} that the mirror to $(X_{\Sigma},D_{\Sigma})$ is 
\[  \mathrm{Spec}\mathbf{k}[\langle L \rangle ][\vartheta_{(1,0)} ,  \vartheta_{(0,1)} ,  \vartheta_{(-1,-1)}] / (  \vartheta_{(1,0)} \vartheta_{(0,1)} \vartheta_{(-1,-1)}  = t^{L} ) \]
\end{example}

Before proceeding, we give another example of the mirror to a toric log Calabi--Yau pair in dimension three.
\begin{example}
\label{Ew: mirror to P3}
Let $X_{\Sigma} = \PP^3$. For the generators $(1,0,0)$, $(0,1,0)$ and $(0,0,1)$ of the monoid $M = \ZZ^3$, we denote the corresponding elements in the monoid algebra $\mathbf{k}[M]$ by
\[  z^{(1,0,0)}=x, ~ z^{(0,1,0)}=y, ~ \mathrm{and} ~ z^{(0,0,1)} =z\]
We fix a point $p\in \Sigma$ in the interior of the positive octant, and a PL function $\overline{\varphi}$ vanishing at $p$ defined by
\begin{equation}
\label{Eq: PL vanishing at positive octant}
\overline{\varphi}(x,y,z) = \begin{cases} 
      0 & \mathrm{on} ~ ~ \langle (1,0,0),(0,1,0),(0,0,1) \rangle  \\
      -x [L] & \mathrm{on} ~ \langle (0,1,0),(0,0,1),(-1,-1,-1) \rangle \\ 
      -y [L] & \mathrm{on} ~ \langle (1,0,0),(0,0,1),(-1,-1,-1) \rangle \\ 
      -z [L] & \mathrm{on} ~ \langle (1,0,0),(0,1,0),(-1,-1,-1) \rangle \\ 
   \end{cases}
\end{equation}
Then, using \eqref{Eq: vartheta}, we write the theta functions generating the coordinate ring for the mirror to $(X_{\Sigma},D_{\Sigma})$ which in this case are: 
\begin{align}
\label{Eq: thetas for P3}
    \vartheta_{(1,0,0)} & = z^{(1,0,0)} t^{\overline{\varphi}(1,0,0)} = x \,, \\
    \nonumber
    \vartheta_{(0,1,0)} & = z^{(0,1,0)} t^{\overline{\varphi}(0,1,0)} =  y \,, \\
\nonumber
 \vartheta_{(0,0,1)} & = z^{(0,0,1)} t^{\overline{\varphi}(0,0,1)} =  z \,, \\
\nonumber
 \vartheta_{(-1,-1,-1)} & = z^{(-1,-1,-1)} t^{\overline{\varphi}(-1,-1,-1)} = x^{-1}y^{-1}z^{-1}  t^{[L]} \,.
\end{align}
It follows from \eqref{Eq: thetas for P3} that the mirror to $(X_{\Sigma},D_{\Sigma})$ is 
\[  \mathrm{Spec}\mathbf{k}[\langle L \rangle ][\vartheta_{(1,0,0)} ,  \vartheta_{(0,1,0)} , \vartheta_{(0,0,1)}, \vartheta_{(-1,-1,-1)}] / (  \vartheta_{(1,0,0)} \vartheta_{(0,1,0)}\vartheta_{(0,0,1)}\vartheta_{(-1,-1,-1)}  = t^{L} ) \]
\end{example}

\section{Mirrors to log Calabi--Yau pairs: the general case}
\label{sec:canonical scattering}
To construct mirrors to log Calabi--Yau pairs which are not toric, we need a generalization of the notion of a momentum polytope image of the mirror to a toric log Calabi--Yau pair. This is provided by the \emph{canonical wall structure}, or the \emph{canonical scattering diagram}  \cite{GHK,GHS,GSCanScat}. Before describing the canonical wall structure, we first review the general definition of wall structures\footnote{Compare with the most general set-up of \cite{GHS}, we are making some simplifying assumptions which will always be satisfied for the examples considered in this paper: $B$ is taken to be a manifold rather than a general pseudomanifold, and we assume that $\Delta$ is contained in a union of codimension two cells of $\P$.}. 

\subsection{Data for wall-structures}
\label{sec_data}
To define a wall structure we need 
to fix the following data:
\begin{itemize}
    \item $(B,\P)$: {an integral affine manifold with singularities} $B$, along with a polyhedral decomposition $\P$, such that the \emph{discriminant locus} $\Delta$ of the affine structure is contained in a union of codimension two cells of $\P$. In what follows we refer to cells of $\P \subset B$ which are of dimensions $0$, $1$ and $n$ as
\emph{vertices}, \emph{edges} and \emph{maximal cells}. The set of $k$-cells are denoted by $\P^{[k]}$ and we write $\P^{\max}:=\P^{[n]}$ for the set of maximal cells.
We allow $B$ to be a manifold with boundary $\partial B$, that is required to be a union of codimension one cells of $\P$.
Cells of
$\P$ contained in $\partial B$ are called a \emph{boundary cell}, and
cells of $\P$ not contained in $\partial B$ are called \emph{interior}.
We denote by $\mathring{\P}\subseteq \P$ the set of interior cells of $\P$. We denote by $\Lambda$ the sheaf of integral tangent vectors on $B \setminus \Delta$, and for every cell $\sigma$ of $\cP$, we denote by $\Lambda_\sigma$ the space of integral tangent vectors to $\sigma$.
    \item A \emph{toric monoid} $Q$. Recall that a toric monoid $Q$ is a finitely generated, integral, saturated monoid which in addition satisfies that $Q^{\gp}$ is torsion-free. We denote by $Q_{\RR} \subseteq Q_{\RR}^{\gp}$ the corresponding cone, that is, $Q = Q^{\gp} \cap Q_{\RR}$. We denote $I_0:= Q \setminus Q^\star$ the maximal monoid ideal of $Q$, where $Q^\star$ is the set of invertible elements. We also fix a monoid ideal $I$ of $Q$ with radical $I_0$.

\item A \emph{multi-valued piecewise linear} (MVPL) function  $\varphi$ on $B \setminus \Delta $ with values in $Q^{\gp}_{\RR}$:
We define a \emph{multi-valued
piecewise linear} (MVPL) function $\varphi$ on $B\setminus\Delta$ with values in $Q^{\gp}_{\RR}$ as in \cite[Def.\ 1.4]{GHS}. 
On the open star $\mathrm{Star}(\rho)$ of each codimension one cell $\rho\in\mathring\P$,
we have a piecewise
linear function $\varphi_{\rho}$, well-defined up to
linear functions.
Such a MVPL function is determined by specifying its \emph{kinks} $\kappa_{\rho}
\in Q^{\gp}$ for each codimension one cone $\rho\in \mathring{\P}$ defined as follows (see \cite[Def.\ 1.6, Prop.\ 1.9]{GHS}): Let $\rho\in \mathring{\P}$ be a codimension one cone and let $\sigma,\sigma'$ be the two maximal cells
containing $\rho$, and let $\varphi_{\rho}$ be a piecewise linear
function on $\mathrm{Star}(\rho)\subset
B\setminus\Delta$. An affine chart at $x\in\mathrm{Int}\rho$ thus
provides an identification $\Lambda_\sigma=
\Lambda_{\sigma'}=:\Lambda_x$. Let $\delta:\Lambda_x\to\ZZ$ be the quotient by
$\Lambda_\rho \subseteq\Lambda_x$. Fix signs by requiring that $\delta$
is non-negative on tangent vectors pointing from $\rho$ into
$\sigma'$. Let $n,n'\in \check\Lambda_x\otimes Q^\gp$ be the slopes of
$\varphi_{\rho}|_\sigma$, $\varphi_{\rho}|_{\sigma'}$, respectively. Then
$(n'-n)(\Lambda_\rho)=0$ and hence there exists $\kappa_{\rho}\in Q^\gp$
with
\begin{equation}
\label{Eqn: kink}
n' -n =\delta \cdot\kappa_{\rho}.
\end{equation}
We refer to $\kappa_{\rho}$ as the \emph{kink} of $\varphi_{\rho}$ along $\rho$.
Thus, if $\varphi$ is an MVPL function, it has a well-defined kink
$\kappa_{\rho}$ for each such $\rho$, and these kinks determine $\varphi$. We also assume that the MVPL function $\varphi$ is \emph{strictly convex} in the sense that $\kappa_\rho
\in I_0$ for all $\rho$.
\item An \emph{order zero function} $f_{\underline{\rho}} \in (\mathbf{k}[Q]/I_0) [\Lambda_\rho]$ for each codimension one cell $\rho$ of $\mathring{\P}$. 
\end{itemize}

The choice of the MVPL function $\varphi$ gives rise to a local system $\shP$ fitting into the exact sequence
    \begin{equation}
\label{eq:Q exact}
0\rightarrow \ul{Q}^{\gp}\rightarrow \shP\rightarrow \Lambda\rightarrow 0 .
\end{equation}
Here $\ul{Q}^{\gp}$ is the constant sheaf with stalk $Q^{\gp}$. The sheaf $\shP$ contains via \cite[Def.\ 1.16]{GHS}, a subsheaf $\shP^+\subseteq\shP$.

\begin{itemize}
    \item For a generic point $x\in B$ in the interior of a maximal cell $\P^{\mathrm{max}} \in \P$, the stalk of $\shP^+$ is $\shP_x^+=\Lambda_x\times Q$, whereas the stalk of $\shP$ is $\shP_x=\Lambda_x\times Q^{\gp}$.
 \item For a point $x$ lies in the interior of a codimension one cell
$\rho$ which is not a boundary cell, 
\begin{equation}
\nonumber
\label{Eq: Pplus description}
\shP^+_x=\big\{\big(m,(d\varphi_{\rho}|_{\sigma})(m)+q\big)\,\big |\,\rho\subseteq\sigma
\in\scrP^{\max},\, m\in T_x\sigma\cap\Lambda_x,\,q\in Q\big\}.
\end{equation}
Here $T_x\sigma$ denotes the tangent wedge to $\sigma$ at $x$.
\end{itemize}
For an element $m\in \shP_x$, we write $\bar m\in\Lambda_x$ for its image
under the projection of \eqref{eq:Q exact}.


\subsection{Wall-structures}
Now we are ready to define a wall structure. 
\begin{definition}
\label{Def: walls}
Fix an integral affine manifold with singularities along with a polyhedral decomposition $(B,\P)$, a toric monoid $Q$, a strictly convex MVPL function $\varphi$, and order zero functions $f_{\underline{\rho}}$ as in \S \ref{sec_data}. A \emph{wall} on $(B,\P)$ is a codimension one rational polyhedron
$\fod\not\subseteq\partial B$ contained in some maximal cone $\sigma$ of $\P$, 
along with an element
\begin{equation}
\label{Eq: canonical fncs}
    f_{\fod}=\sum_{m\in\shP^+_x, \bar m\in \Lambda_{\fod}} c_m z^m 
\in \mathbf{k}[\shP^+_x]/I_x,
\end{equation}
referred to as a \emph{wall crossing function}, where $c_m\in \mathbf{k}$.  
Here $x\in \mathrm{Int}(\fod)$ and
$\Lambda_{\fod}$ is the lattice of integral tangent vectors to
$\fod$. We require that $m \in \shP^+_x$ for all $y \in \fod \setminus \Delta$ when $c_m \neq 0$. We say a wall $\fod$ has \emph{direction} $v \in \Lambda_{\fod}$ if the attached function $f_\fod$, given as in \eqref{Eq: canonical fncs}, satisfies 
$\bar m=-kv$ for some $k\in \NN$ whenever $c_m\not=0$. We call a wall with direction $v$ \emph{incoming} if $\fod=\fod-\RR_{\ge 0} v$. A \emph{wall structure} or a \emph{scattering diagram} on $(B,\P)$ over $Q$ is a finite set $\foD$ of walls on $B$ given as in \eqref{Eq: canonical fncs}, and satisfying the following conditions:
\begin{itemize}
    \item If $\fod \cap \mathrm{Int} \sigma \neq \emptyset$  then $f_{\fod} \equiv 1$ modulo $I_0$, and
    \item For every codimension one cell $\rho$ of $\mathring{\P}$, and every point $x \in \rho$, denote by $f_{\rho,x}$ the product of $f_\fod$ over all the walls $\fod$ containing $x$ and contained in $\rho$. Then, we have $f_{\rho,x} \equiv f_{\underline{\rho}}$ modulo $I_0$.
\end{itemize}
\end{definition}

If $\foD = \cup (\fod,f_{\fod})$ is a wall structure, we define the \emph{support} and the \emph{singular locus} in $\foD$ respectively by
\begin{eqnarray}
\nonumber
\mathrm{Supp}(\foD) & := & \bigcup_{\fod } \fod, \\
\nonumber
\mathrm{Sing}(\foD) & := & \Delta \cup \bigcup_{\fod } \partial\fod
\cup \bigcup_{\fod,\fod'} (\fod\cap\fod') \,,
\end{eqnarray}
where the last union is over all pairs of walls $\fod,\fod'$ with
$\fod\cap\fod'$ codimension at least two. In particular,
$\mathrm{Sing}(\foD)$ is a codimension at least two subset of $B$.

\subsection{The canonical wall structure}
\label{sec: wall structures}
To define the canonical wall structure associated to a log Calabi--Yau pair $(X,D)$ we first describe the tropicalization $(B,\P)$ of $(X,D)$, then the monoid $Q(X,D)$ associated to $(X,D)$ along with a $Q^{\gp}_{(X,D)}$-valued PL function on $(B,\P)$. 
\subsubsection{The tropicalization $(B,\P)$ of $(X,D)$}
The tropical space associated to $(X,D)$, or the \emph{tropicalization of $(X,D)$}, is a pair $(B,\P)$ consisting of an integral affine manifold with singularities $B$, along with a polyhedral decomposition $\P$. We describe $(B,\P)$ from the data of the intersection numbers of irreducible components of $D$. For this, first consider $\Div(X)$, which denotes the group of divisors on $X$, and $\Div_D(X)\subseteq \Div(X)$, the subgroup of divisors supported
on $D$. Moreover, we set
\[\Div_D(X)_{\RR}=\Div_D(X)\otimes_{\ZZ}\RR.\] 
Let $D=\bigcup_{i=1}^m D_i$ be the decomposition of $D$ into
irreducible components, and write $\{D_i^*\}$ for the dual basis of
$\Div_D(X)_{\RR}^*$. We assume throughout that for any index subset
$I\subseteq \{1,\ldots,m\}$, if
non-empty, $\bigcap_{i\in I} D_i$ is connected. Define the polyhedral decomposition $\P$ to be the collection of cones
\begin{equation}
    \label{Eq: polyhedral decomposition}
    \P:=\left
\{\sum_{i\in I} \RR_{\ge 0} D_i^*\,|\, \hbox{$I\subseteq \{1,\ldots,m\}$
such that $\bigcap_{i\in I}D_i\not=\emptyset$}\right\}.
\end{equation}
Then we set
\[
B:=\bigcup_{\tau\in\P}\tau \subseteq \Div_D(X)^*_{\RR}.
\]

Generally, we view the tropicalization $(B,\P)$ of a log Calabi--Yau pair $(X,D)$ as a topological manifold described as above, together with the data of an affine structure with singularities -- see \cite{AG}[\S2.1.1]. 


\begin{example}
\label{Ex: trop X}
Let $X$ be a del Pezzo surface of degree $8$. Thus, $X$ is isomorphic to the blowup $X \to \PP^2$ in a single point, which we assume to lie in the interior of a component of the toric boundary divisor $D_{\PP^2} \subset \PP^2$. We set $D$ to be the strict transform of $D_{\PP^2}$. Then, $(X,D)$ defines a log Calabi--Yau pair. In this case, $D$ has $3$ irreducible components with self-intersection numbers given by the tuple $(1,1,0)$. The associated tropical space $B$ has three maximal two dimensional cones, whose set of rays are given respectively by $\{\rho_1,\rho_2\}$, $\{\rho_2,\rho_3\}$ and $\{\rho_3,\rho_1\}$, where $\rho_1$ has direction $(1,0)$, $\rho_2$ has direction $(0,1)$, and $\rho_3$ has direction $(-1,-1)$. We denote the cone with rays $\{\rho_i,\rho_j\}$ by $C_{i,j}$.  For this consider an open cover of $\RR^2 \setminus \{ 0 \}$ given by the union of the three subsets 
\begin{align*}
    U_1 & = C_{1,2} \cup  C_{1,3} \setminus \{ \rho_2,\rho_3 \}, \\
    U_2 & = C_{1,3} \cup  C_{2,3} \setminus \{ \rho_1,\rho_3 \}, \\
    U_3 & = C_{1,2} \cup  C_{2,3} \setminus \{ \rho_1,\rho_3 \}, \\
\end{align*}
and define the charts for the affine structure by setting $\Psi_1 : U_1 \hookrightarrow \mathbb{R}^2$ $\Psi_2 : U_2 \hookrightarrow \mathbb{R}^2$ to be restrictions of the identity map on $U_1$ and $U_2$ respectively. We then define $\Psi_3:U_3 \to \mathbb{R}^2$ by
\[ 
\Psi_3(x,y) = \begin{cases} 
      (x,y) & \mathrm{on} ~ C_{1,2}  \setminus \{ \rho_1 \} \\
      (x,y-x) & \mathrm{on} ~ C_{1,2}  \setminus \{ \rho_3 \}
   \end{cases}
\]
as illustrated in Figure \ref{Fig: affine}. Note that the matrix for the change of coordinate transformation in this case is conjugate to
\begin{equation*}
\begin{pmatrix}
1 & 1  \\
0 & 1  
\end{pmatrix}
\end{equation*}
which represents the standard \emph{focus-focus singularity} -- see for instance \cite{leung2010almost} for further discussion on such singularities in dimension two and the affine monodromy. This endows $B$ with an integral affine structure with a singularity at the origin.
\begin{figure}[h!]
\resizebox{.9\linewidth}{!}{\input{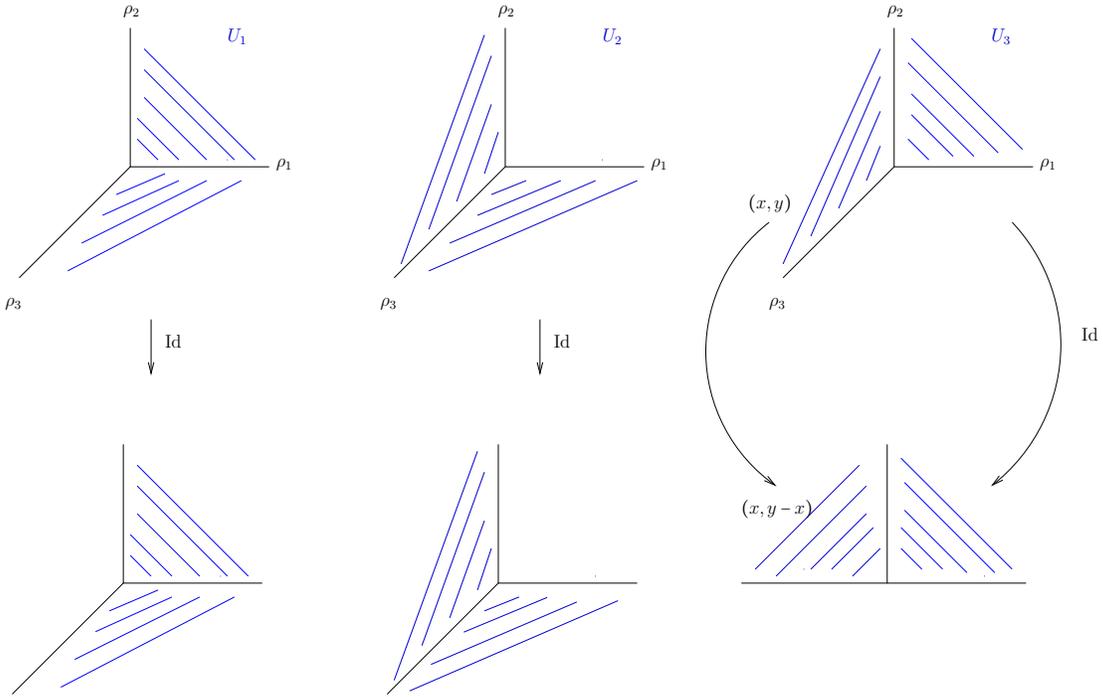}}
\caption{The three charts defining the integral affine structure on $B \setminus \{  0 \}$}
\label{Fig: affine}
\end{figure}
\end{example}

The next ingredient we need to define the canonical wall structure associated to a log Calabi--Yau pair $(X,D)$ is the toric monoid, which we denote by $Q(X,D)$ and refer to as the relevant monoid, and the data of a MVPL function with values in $Q^{\gp}_{\RR}(X,D)$, which is specified by its kinks in $Q(X,D)$. 
\subsubsection{The relevant monoid $Q(X,D)$}
To define $Q(X,D)$, we first need the description of $\AA^1$-curves and boundary curves on $(X,D)$.
\begin{definition}
\label{A1curve}
An \emph{$\AA^1$-curve} on a log Calabi--Yau pair $(X,D)$ is the image of a genus zero stable map to $X$, such that the intersection of $C$ with $D$ is a single point.
\end{definition}
Observe that by the description of the tropicalization of $(X,D)$, it automatically follows that in the situation  $(X,D)$ is a blow-up of a toric log Calabi--Yau pair as in \eqref{Eq: blow up}, any codimension one stratum on $B$ corresponds to a rational curve in $X$ contained in $D$. More generally, for any log Calabi--Yau pair $(X,D)$ since by definition $D$ has simple normal crossing singularities, such a strata corresponds to a smooth curve.  
\begin{example}
Let $X$ be the blow-up of a non-toric point in the interior of the toric boundary divisor in $\PP^2$. Then, an exceptional curve with class $E$ as well as a curve with class $L-E$, where $L$ is the class of a general line in $X$ as illustrated in Figure \ref{Fig:0} are examples of $\AA^1$-curves.
\end{example}

To describe the relevant monoid, in addition to $\AA^1$-curves, we also consider \emph{boundary curves} in $(X,D)$, which are curves contained in $D$.


\begin{definition}
\label{QXD}
Let $(X,D)$ be a log Calabi--Yau pair.
The \emph{relevant cone of curves} $\mathcal{C}(X,D)$ is the cone in $N_1(X) \otimes \RR$ generated by the union of all $\AA^1$-curves and boundary curves.
The \emph{relevant monoid $Q(X,D)$ associated to $(X,D)$} is the monoid of integral points in $\mathcal{C}(X,D)$:
\begin{equation}
    \label{eq: monoid for XD}
Q(X,D) := \langle  [ C ] ~ | ~ C \mathrm{~is ~  an ~} \AA^1-\mathrm{curve \mathrm{~ or ~  a  ~ boundary ~ curve ~}} \rangle_{\ZZ}.
\end{equation}
Here we us the notation $[C]$ to denote the class of a curve $C$.
\end{definition}
Before proceeding, we show that in the two dimensional situation, the relevant cone of curves agrees with the Mori cone ``generically''. To describe the notion of \emph{genericity} for a log Calabi--Yau pair in dimension two, we need the following definition, which can be found in \cite[Definition~1.5]{gross2015moduli}:
\begin{definition}
\label{Def: generic}
Let $(X,D)$ be a log Calabi--Yau pair and assume that $X$ is of dimension two. Denote by $D^{\perp} \subset \mathrm{Pic}(X)$ be the sublattice of the Picard group of $X$, defined by  
\[ D^{\perp} := \{ \alpha \in \mathrm{Pic}(X) ~ | ~ \alpha \cdot [D_i] = 0 ~ \mathrm{for ~ all ~ i} \}.\]
There is a natural \emph{period map} 
\begin{align}
     \label{Eq: period}
    \phi_X :  D^{\perp} & \longrightarrow \mathrm{Pic}^0(D) \cong \CC^* \\
    \nonumber
    \mathcal{L} & \longmapsto  \mathcal{L}|_{D}
\end{align}
defined by restricting a line bundle on $X$ to $D$.
\end{definition}
A key result in \cite{gross2015moduli} shows that the deformation space of a log Calabi--Yau pair $(X,D)$, where $X$ is of dimension two, is locally isomorphic to $\mathrm{Hom}(D^{\perp},\CC^*)$. Therefore, it makes sense to define a generic log Calabi--Yau pair as follows.
\begin{definition}
\label{Def: generic log CY}
A log Calabi--Yau pair $(X,D)$, where $X$ is of dimension two, is called generic if $\phi_X(\alpha)  \neq 1$
for all $\alpha \in D^{\perp} $ where $\phi_X$ is the period map defined in \eqref{Eq: period}.
\end{definition}
\begin{remark}
The definition of genericity we provide here is slightly different than in \cite[Definition~1.4]{gross2015moduli}, which is equivalently stated in \cite[Corollary~3.5]{gross2015moduli} as the condition $\phi_X(\alpha)  \neq 1$ for all $\alpha \in D^{\perp} $ which have self-intersection $-2$. Here we require this condition for all $\alpha$ regardless of the self-intersection. It is also worthwhile mentioning that, frequently the term \emph{generic} is used for the complement of finitely many objects, while here we have not finite but countably many objects, as the condition $\phi_X(\alpha) = 1$ defines in most cases an infinite union of hypersurfaces in $\mathrm{Hom}(D^{\perp}, \CC^*)$. If we would consider only $\alpha$ with self intersection $-2$ we would still have countably many hypersurfaces, rather than finite. So, we inherit the abuse of the term ``generic'' here from \cite{gross2015moduli}.
\end{remark}

\begin{theorem}
\label{Theorem relevant Mori}
Let $(X,D)$ be a generic log Calabi--Yau pair, where $X$ is of dimension two. Then, the 
relevant cone of curves $\mathcal{C}(X,D)$ in Definition \ref{QXD} is isomorphic to the Mori cone $\mathrm{NE}(X)$. 
\end{theorem}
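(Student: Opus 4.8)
The statement is an equality of cones in $N_1(X)\otimes\RR$, and the plan is to prove the two inclusions separately. The inclusion $\mathcal{C}(X,D)\subseteq\mathrm{NE}(X)$ is immediate: by Definition \ref{QXD} the cone $\mathcal{C}(X,D)$ is generated by classes of $\AA^1$-curves and of boundary curves, all of which are effective. The real content is the reverse inclusion $\mathrm{NE}(X)\subseteq\mathcal{C}(X,D)$, and since $\mathrm{NE}(X)$ is generated by classes of irreducible curves it suffices to show $[C]\in\mathcal{C}(X,D)$ for every irreducible curve $C\subset X$. If $C$ is a component of $D$ this holds by definition; otherwise $C\cdot D_i\ge 0$ for every component $D_i$ of $D$, so $-K_X\cdot C=D\cdot C=\sum_i C\cdot D_i\ge 0$.

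The first key step is to use genericity to exclude \emph{internal curves}, i.e.\ the case $C\cap D=\emptyset$. In that case $C\cdot D_i=0$ for all $i$, so $[C]\in D^{\perp}$, and since $C$ and $D$ are disjoint $\cO_X(C)|_D\cong\cO_D$, i.e.\ $\phi_X([C])=1$; as $[C]\neq 0$ this contradicts the genericity hypothesis of Definition \ref{Def: generic log CY}. This is the essential use of genericity, and it shows in particular that $X$ carries no curve disjoint from $D$; hence every irreducible $C\not\subseteq D$ satisfies $D\cdot C\ge 1$, equivalently $K_X\cdot C<0$.

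Next I would invoke the Cone Theorem for the smooth projective rational surface $X$. The closure $\overline{\mathrm{NE}}(X)$ is the sum of its $K_X$-nonnegative part and the rays $\RR_{\ge 0}[C_i]$ over a locally finite set of $K_X$-negative extremal rays, each $C_i$ a rational curve with $0<-K_X\cdot C_i\le 3$. Using the previous paragraph and a Zariski-decomposition argument, each $K_X$-nonnegative extremal ray is proportional either to a boundary class $[D_i]$ or to $-K_X=[D]=\sum_i[D_i]$ (e.g.\ when $X$ is rational elliptic), hence lies in $\mathcal{C}(X,D)$; so it remains to treat each $C_i$, which I would do by cases on $D\cdot C_i\in\{1,2,3\}$. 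If $D\cdot C_i=1$ then adjunction forces $C_i$ to be a $(-1)$-curve, and it meets $D$ transversally at a single smooth point (passing through a node of $D$ would force $D\cdot C_i\ge 2$), so $C_i$ is an $\AA^1$-curve. If $D\cdot C_i=2$ then $C_i^2=0$ and $|C_i|$ defines a conic bundle $X\to\PP^1$: if some fibre is reducible, $C_i\sim E_1+E_2$ with each $E_j$ a $(-1)$-curve which by the no-internal-curves step is either a boundary component or meets $D$ at one point, hence an $\AA^1$-curve; if instead $X\to\PP^1$ is a $\PP^1$-bundle, I would realise $[C_i]$ by an $\AA^1$-curve given by a fibre through a node of $D$, or tangent to $D$ when $D$ is irreducible. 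If $D\cdot C_i=3$ then $X\cong\PP^2$, $\mathrm{NE}(X)=\RR_{\ge 0}[L]$, and $[L]=[D_j]$ for any boundary component. Finally, every effective class is a finite nonnegative combination of boundary classes and such $[C_i]$ by Carath\'eodory's theorem for pointed cones, so it lies in $\mathcal{C}(X,D)$.

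I expect the main obstacle to be the conic-bundle case $D\cdot C_i=2$, and within it the sub-case of a genuine $\PP^1$-bundle $X\to\PP^1$ with no reducible fibre, where one must exhibit by hand an $\AA^1$-curve in the fibre class; the natural candidates — a fibre through a node of $D$, or, when $D$ is smooth, a fibre tangent to $D$ — must be shown to exist and to meet $D$ in a single point, which is exactly where the normal-crossing structure of $D$ (and, if necessary, a further use of genericity) is needed. A secondary point requiring care is that $\mathrm{NE}(X)$ need not be rational polyhedral when $K_X^2<0$, so one must check that the cone generated by \emph{all} effective curve classes — not merely its closure or its set of extremal rays — is what is recovered; this follows once one knows that every non-boundary irreducible curve is $K_X$-negative, together with the Zariski-decomposition description of the $K_X$-nonnegative face used above.
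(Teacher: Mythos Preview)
Your argument follows essentially the same route as the paper's. Both prove the easy inclusion $\mathcal C(X,D)\subseteq\mathrm{NE}(X)$ and then check that every irreducible curve class lies in $\mathcal C(X,D)$ via the trichotomy on the sign of $K_X\cdot C$: the genericity argument ruling out non-boundary curves with $K_X\cdot C=0$ is identical to the paper's, the case $K_X\cdot C>0$ forcing $C\subset D$ is identical, and for $K_X\cdot C<0$ both appeal to the Cone Theorem and the classification of extremal contractions on smooth surfaces.

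The only organizational difference is in this last case. The paper dispatches it in one line: if $X$ is not $\PP^2$ or a Hirzebruch surface (where the statement is ``easily verified''), every $K_X$-negative extremal ray is spanned by a $(-1)$-curve, which by adjunction meets $D$ in a single point and is therefore an $\AA^1$-curve. You instead split by the value $D\cdot C_i\in\{1,2,3\}$ and argue each separately; but your $D\cdot C_i=3$ case is exactly $X\cong\PP^2$ and your $D\cdot C_i=2$ $\PP^1$-bundle sub-case is exactly the Hirzebruch case, so you are recovering the same dichotomy with more words. Your worry about producing an explicit $\AA^1$-fiber on a Hirzebruch surface is unnecessary: the paper simply checks $\mathrm{NE}(\FF_n)=\mathcal C(\FF_n,D)$ by hand.

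Your extra layer involving Zariski decomposition, the $K_X\ge 0$ face of $\overline{\mathrm{NE}}(X)$, and Carath\'eodory does not appear in the paper and is not needed. The paper never passes to the closure $\overline{\mathrm{NE}}(X)$: once each irreducible curve class is placed in $\mathcal C(X,D)$ by the trichotomy, the inclusion $\mathrm{NE}(X)\subseteq\mathcal C(X,D)$ is immediate, since $\mathrm{NE}(X)$ is by definition the cone generated by such classes. Your invocation of Carath\'eodory is in any case slightly off, as that theorem concerns closed cones.
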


\begin{proof}
By definition as $\mathcal{C}(X,D)$ is generated by the union of boundary curves, together with $\AA^1$-curves, any element in $\mathcal{C}(X,D)$ is an element of the Mori cone $\mathrm{NE}(X,D)$. For the converse, first note that the statement can be easily verified when $X$ is $\PP^2$ or a Hirzebruch surface. In a more general situation, given any irreducible effective curve $C$ in a generic log Calabi--Yau pair $(X,D)$ of dimension two, we will show that it lies in $\mathcal{C}(X,D)$, by analysing the following three possible cases:
\begin{itemize}
 \item $C \cdot K_X = 0$: Consider the line bundle $\mathcal{O}_X(C)$, which has a canonical section that vanishes exactly on $C$. So, as  $\mathcal{O}_X(C)$ is trivial away from $C$, the image of it under the period map \eqref{Eq: period} is trivial, that is, $\phi_X(\mathcal{O}_X(C))=1$. Thus, in this case $X$ is not generic.
    \item $C \cdot K_X < 0$: By Mori's cone theorem, the part of the Mori cone with $K_X < 0$ is generated by extremal rays. Either these would be $-1$ curves or $X$ is a  Hirzebruch surface or $\PP^2$ -- see \cite[\S~5.4]{debarre2016introduction}. Hence, excluding the latter cases, the result follows since any $-1$ curve, by the adjunction formula is a rational curve intersecting $D$ at a single point, hence is in $\mathcal{C}(X,D)$.
      \item $C \cdot K_X > 0$: In this case we have $C \cdot D < 0$, and hence $C$ is enforced to be a boundary curve. 
\end{itemize}
Hence, the result follows.
\end{proof}

\begin{remark}
\label{Rem: cant go higher}
The natural generalization of Theorem \ref{Theorem relevant Mori} to higher dimensions does not hold. For instance, consider the log Calabi--Yau pair $(X,D)$, where $X$ is obtained by a blowing-up $4$ disjoint lines, each contained in one of the toric boundary components of $\PP^3$, and let $D$ be the strict transform of the toric boundary. Then, since there always exists at least one line passing through the $4$ lines that we blow up, there will always be at least one effective curve in the interior of $X$ obtained as the strict transform of such a line, which does not correspond to an element of $\mathcal{C}(X,D)$. In conclusion, generally the mirror family constructed in \cite{GSCanScat} is a base change from a family over the smaller base given by the formal completion of $\mathrm{Spec}\mathbf{k}[Q(X,D)]$ at the maximal ideal $Q(X,D) \setminus \{0\}$.
\end{remark}

We proceed with the description of the final ingredient needed to define the canonical wall structure, that is, a multi-valued piecewise-linear (MVPL) function.

\subsubsection{The MVPL function $\varphi : B\setminus \Delta \to Q(X,D)^{\gp}_\RR$}
\label{subsec:varphi}
Now we are ready to describe the MVPL function $\varphi : B\setminus \Delta \to Q(X,D)^{\gp}_\RR$, by specifying its kinks on codimension one cells of $(B,\P)$. There is a canonical choice of these kinks, defined as follows: for a codimension one cell $\rho\in\mathring{\P}$,
set the kink of $\varphi$ to be 
\begin{equation}
    \label{Eq: kink}
    \kappa_{\rho}=[D_{\rho}],
\end{equation}
the class of the boundary curve corresponding to $\rho$ (see \cite[\S2.1.2]{AG} for further details). Note that fixing the kinks, uniquely determines $\varphi$ up to a linear function. Before proceeding, we provide an example of a multi-valued PL function.
\begin{example}
Let $(B,\P)$ be the tropical space as in Example \ref{Ex: trop X}, associated to the log Calabi--Yau pair $(X,D)$, where $X$ is a non-toric blow up of $\PP^2$. To define a MVPL function on $B \setminus \{ 0 \}$ it suffices to define a piecewise linear function on the neighbourhoods given by the open stars of each of the three rays $\rho_i$, for $1 \leq i \leq 3$. These functions, up to linear functions are defined by specifying the kinks $[L]\in Q$, given by the class of a general line in $X$, along each ray $\rho_2$ and $\rho_3$, and $[L-E] \in Q$ along $\rho_1$ where $[E]$ stands for the class of the exceptional fiber. 
Note that in their domains of intersections these PL functions may take different values, as $\rho$ is indeed "multi-valued".
\end{example}

\subsubsection{The canonical wall-structure}
Now, we are ready to review definition of the canonical wall structure associated to $(X,D)$ following \cite[\S~$2.4$]{AG}, or \cite{GSCanScat}. We let $(X,D)$ be a log Calabi--Yau pair with tropicalization $(B,\P)$, and $Q(X,D)$ the relevant monoid associated to $(X,D)$ defined as in \eqref{eq: monoid for XD}. We also fix a MVPL function with kinks defined canonically as in \eqref{Eq: kink}. Finally, for every codimension one cell $\rho$ of $\P$, we consider the order zero functions $f_{\underline{\rho}}=1$. Note that $Q(X,D)^\star=\{0\}$, so $I_0=Q(X,D)\setminus \{0\}$, and so $\mathbf{k}[Q(X,D)]/I_0 \simeq \mathbf{k}$.

The \emph{canonical wall structure} associated to $(X,D)$ is a wall structure on $(B,\P)$ over $Q(X,D)$, where for each wall $\fod$, the attached wall crossing function is concretely given by \eqref{Eq: canonical wall}. Note that, for every ideal $I \subseteq Q(X,D)$ such that 
$\sqrt{I}=I_0$, considering the wall crossing functions modulo $I$, the canonical wall structure is a wall structure in the sense of Definition \ref{Def: walls}, that is, with finitely many walls. If we do not work modulo such ideal $I$, the canonical wall structure might contain infinitely many walls.
\begin{example}
The canonical wall structure associated to the blow-up of $\PP^2$ at a non-toric point is illustrated on the left hand side of Figure \ref{Fig:3}.
\end{example}

\subsection{Pulling singularities out from the canonical wall structure}
 \label{Pulling singularities}
In this section we review how to ``pull out'' the discriminant locus of the canonical wall structure associated to a log Calabi--Yau pair $(X,D)$, obtained from a toric log Calabi--Yau pair $(X_{\Sigma},D_{\Sigma})$ by a blow-up as in \eqref{Eq: blow up}. 
More precisely, we fix distinct rays 
$\rho_1,\dots,\rho_s$
of the fan $\Sigma$ of $X_\Sigma$, and a disjoint union of smooth hypersurfaces $H=H_1\cup\cdots\cup H_s$ in $D_\Sigma$, such that $H_i \subset D_{\rho_i}$ for all $1\leq i \leq s$, where $D_{\rho_i}$ is the toric divisor of $X_\Sigma$ corresponding to the ray $\rho_i$. Then, we take for $X$ the blow-up of $X_\Sigma$ along $H$, and for $D$ the strict transform of $D_\Sigma$. We assume further that the toric variety $X_\Sigma$ is smooth and projective, and that no cone of $\Sigma$ contains two rays $\rho_i$ and $\rho_j$ with $i \neq j$. These conditions are always satisfied after refining enough the fan $\Sigma$. 

We further write 
\begin{equation}
    \label{Eq: hypersurfaces H}
H_i=\bigcup_{j=1}^{s_i} H_{ij} \,,
\end{equation}
for the decomposition of $H_i$ into its connected components. The main result of \cite{AG} provides a combinatorial algorithm to construct the canonical wall structure $\foD_{(X,D)}$ associated to $(X,D)$ from a \emph{toric wall structure} $\foD_{(X_{\Sigma},H)}$ in $\RR^n$, obtained from the data of $X_{\Sigma}$ and $H$. We show that this toric wall structure, in rough terms, encodes all the data of the canonical wall structure with its singularities are pulled out. We first provide a precise description of the toric wall structure $\foD_{(X_{\Sigma},H)}$, and then explain how to obtain the canonical wall structure associated to $(X,D)$ from it in the remaining part of this section. For details, we refer to \cite{AG}.

\subsubsection{The toric wall structure}
Let
\begin{equation}
\nonumber
    P = M\oplus \bigoplus_{i=1}^s\NN^{s_i},
\end{equation}
where $M$ is the cocharacter lattice associated with $X_{\Sigma}$, so that the fan $\Sigma$ is contained in the real vector space $M_\RR:=M \otimes \RR$, and let $P^{\times}$ be the group of units of $P$. Consider the ideal $\fom_P=P\setminus P^{\times}$, and denote by $\widehat{\mathbf{k}[P]}$ the completion of $\mathbf{k}[P]$ with respect to
$\fom_P$. We denote the generators of $\NN^{s_i}$ by $e_{i1},\ldots,e_{is_i}$, and set 
\begin{equation}
    \label{Eq: tij}
    t_{ij}:=z^{e_{ij}}\in \widehat{\mathbf{k}[P]}.
\end{equation}

\begin{definition}
A \emph{wall structure on $M_\RR$} is a wall structure as in Definition \ref{Def: walls}, where $B=M_\RR$ with the integral affine structure induced by $M \subset M_\RR$, and $\P$ is the trivial polyhedral decomposition with the single cell $M_\RR$. In particular, there is no MVPL function $\varphi$ or order zero functions in the description of a wall structure in $M_\RR$. Note also that $M_\RR$ is an integral affine manifold without singularities and so the discriminant locus $\Delta$ is empty.
\end{definition}

We review below the definition of the wall structure $\foD_{(X_\Sigma,H)}$ on $M_\RR$ over $P$.
We first describe the initial wall structure $\foD_{(X_{\Sigma},H),\mathrm{in}}$ whose walls are codimension one subsets of $M_\RR$ called \emph{widgets}. We review the description of widgets below. For details, see \cite[\S$5.1.2$]{AG}. 

For every $1\leq i\leq s$, we denote by $m_i \in M$ the primitive generator of the ray $\rho_i$ of $\sigma$. The corresponding \emph{widget} $\foD_{m_i}$ is the wall-structure on $M_\RR$ over $P$ defined as follows:
\begin{equation}\label{eq_initial}
    \foD_{m_i} := \bigcup_{\rho} \left(\rho, \prod_{j=1}^{s_i}(1+t_{ij}z^{m_i})^{D_{\rho}\cdot H_{ij}}\right) \,,
\end{equation}
where the union is over the codimension one cones $\rho$ of $\Sigma$ containing the ray $\rho_i=\RR_{\geq 0} m_i$, and $D_{\rho}\cdot H_{ij}$ is the intersection number in $D_{\rho_i}$ between the hypersurface $H_{ij}$ and the toric curve $D_\rho$ corresponding to $\rho$.

Now, the initial wall structure
$\foD_{(X_{\Sigma},H),\mathrm{in}}$
is defined as the union of the widgets $\foD_{m_i}$:
\[  \foD_{(X_{\Sigma},H),\mathrm{in}} := \bigcup_{i=1}^s \foD_{m_i} \,. \]
We describe the \emph{consistent} wall structure $\foD_{(X_{\Sigma},H)}$ in $M_\RR$ obtained from  $\foD_{(X_{\Sigma},H),\mathrm{in}}$ in a moment, after reviewing the notion of consistency for a wall structure.

\subsubsection{The notion of consistency}
In this section we shortly review the notion of path-ordered products and consistency for a wall structure $\foD$ in $M_\RR$, after setting up a couple of necessary notations. 

Let $\gamma:[0,1]\rightarrow M_\RR$ be a piecewise smooth path
whose image is disjoint from $\Sing(\foD)$. Further, assume that
$\gamma$ is transversal to $\mathrm{Supp}(\foD)$, in the sense that
if $\gamma(t_0)\in\fod\in\foD$, then there is an $\epsilon>0$ such
that $\gamma((t_0-\epsilon,t_0))$ lies on one side of $\fod$
and $\gamma((t_0,t_0+\epsilon))$ lies on the other. Assuming that $\gamma(t_0)\in \fod$, we associate a \emph{wall-crossing}
homomorphism $\theta_{\gamma,\mathfrak{d}}$ as follows.
Let $n_\fod$ be a generator of $\Lambda_\fod^\perp
\subseteq\check\Lambda_x=\Hom(\Lambda_x,\ZZ)$ for some $x\in\mathrm{Int}\fod$, with
$n_\fod$ positive on $\gamma((t_0-\epsilon,t_0))$.
Then define 
\begin{equation}
\label{Eqn: theta_fop}
\theta_{\gamma,\mathfrak{d}}: \quad
z^m\longmapsto f_\fod^{\langle n_\fod,\ol m\rangle} z^m.
\end{equation}
We may now define the \emph{path-ordered product}
\begin{equation}
    \label{Eq: path ordered product}
    \nonumber
    \theta_{\gamma,\foD}:=\theta_{\gamma,\fod_s}\circ\cdots\circ
\theta_{\gamma,\fod_1},
\end{equation}
where $\fod_1,\ldots,\fod_s$ is a complete list of walls traversed
by $\gamma$, in the order traversed.
\begin{definition}
A \emph{joint} is a codimension two polyhedral
subset of $M_\RR$ contained in  $\Sing(\foD)$, and such that for $x\in\mathrm{Int}(j)$, 
the set of walls $\{\fod\in\foD\,|\,x\in \fod\}$ is independent of
$x$. Further, a joint must be a maximal subset with this property. A wall structure on $M_\RR$ is said to be \emph{consistent} if all path ordered products along any sufficiently small loop around a joint is identity. 
\end{definition}
In \cite[Theorem 5.6]{AG}, we prove the higher dimensional analogue of the Kontsevich--Soibelman Lemma \cite{KS}:
\newcommand {\Scatter} {\operatorname{{\mathsf{S}}}}
\begin{theorem}
There is a consistent wall structure
$\foD_{(X_{\Sigma},H)}$ on $M_\RR$ over $P$ containing $\foD_{(X_{\Sigma},H),\mathrm{in}}$ such that $\foD_{(X_{\Sigma},H)}\setminus
\foD_{(X_{\Sigma},H),\mathrm{in}}$ consists only of non-incoming walls. Further, this wall structure is unique up to equivalence.
\end{theorem}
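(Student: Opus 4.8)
The plan is to follow the strategy of the classical Kontsevich–Soibelman Lemma, working order-by-order in the $\fom_P$-adic filtration of $\widehat{\mathbf{k}[P]}$. First I would fix, for each $N\ge 1$, the finite quotient ring $R_N := \mathbf{k}[P]/\fom_P^N$, and observe that modulo $\fom_P^N$ each wall-crossing function $\prod_{j}(1+t_{ij}z^{m_i})^{D_\rho\cdot H_{ij}}$ is a polynomial, and that a wall structure over $R_N$ has only finitely many walls with nontrivial functions. The goal becomes: given the initial wall structure $\foD_{(X_\Sigma,H),\mathrm{in}}$ reduced mod $\fom_P^N$, produce a consistent wall structure $\foD^{(N)}$ over $R_N$ agreeing with it, and compatible under the projections $R_{N+1}\to R_N$; the inverse limit of the $\foD^{(N)}$ then gives $\foD_{(X_\Sigma,H)}$ over $\widehat{\mathbf{k}[P]}$.

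The inductive step is where the real content lies. Assume $\foD^{(N)}$ is consistent mod $\fom_P^N$. For each joint $j$ of $\foD^{(N)}$ and a small loop $\gamma_j$ around it, the path-ordered product $\theta_{\gamma_j,\foD^{(N)}}$ need no longer be the identity once we pass to $R_{N+1}$; it differs from the identity by a correction living in $\fom_P^N/\fom_P^{N+1}$. Because $Q^{\gp}$ is torsion-free and the affine structure on $M_\RR$ is non-singular ($\Delta=\emptyset$), the usual "derivation" argument applies: the log of this discrepancy, evaluated on $z^m$, is linear in $\ol m$ and supported on the lattice $\Lambda_j^\perp$, so it can be matched by adding finitely many new walls of codimension one emanating from $j$, each of order exactly $N$, with appropriately chosen wall-crossing functions. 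One has to check that these newly added walls can be taken to be \emph{non-incoming} (rays emanating from the joint rather than full lines) — this follows because the discrepancy around a joint is a commutator/higher-order term built from the functions on the walls already present, so it is "outgoing" in nature, exactly as in \cite{GPS,KS}. The local corrections at different joints are then glued: this requires noting that a wall added at one joint either terminates the induction-order count or creates new joints further out, all of order $\ge N$, so the process is locally finite at each order and the consistency conditions at all joints can be satisfied simultaneously by the standard "lowest-order-first" argument.

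The main obstacle — and the reason this needs \cite[Theorem 5.6]{AG} rather than a one-line citation of \cite{KS} — is the \emph{combinatorics of higher-codimensional joints}: in $\RR^n$ with $n>2$, joints are $(n-2)$-dimensional, walls can meet along loci of various dimensions, and one must verify that the local model around each joint reduces, after slicing by a transverse $2$-plane, to a two-dimensional scattering problem to which the Gross–Pandharipande–Siebert / Kontsevich–Soibelman analysis applies, and that these $2$-dimensional reductions are compatible as the joint varies. I would handle this by the perturbation-and-slicing technique: perturb the walls to be in general position, so that all joints are genuinely of codimension two and the transverse slice is well-defined and locally constant along the joint, run the $2$-dimensional argument in each slice, and then check that the resulting walls assemble into honest codimension-one polyhedra in $M_\RR$. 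Uniqueness up to equivalence follows from the same order-by-order argument: any two consistent completions agreeing with $\foD_{(X_\Sigma,H),\mathrm{in}}$ must, at the lowest order where they differ, have their difference concentrated on walls whose functions are forced by the consistency-around-joints conditions, hence they are equivalent in the sense of \cite{AG}.
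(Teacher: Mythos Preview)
The paper does not prove this theorem; it simply cites \cite[Theorem~5.6]{AG} (the author's joint paper with Gross) and states the result. There is no argument in the present paper to compare against.

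Your outline is the standard Kontsevich--Soibelman / Gross--Pandharipande--Siebert strategy, and it is essentially the approach carried out in \cite{AG}: order-by-order correction in the $\fom_P$-adic filtration, computing the discrepancy from consistency around each joint, and adding non-incoming walls to kill it. Your identification of the genuine higher-dimensional difficulty --- that joints are $(n-2)$-dimensional and one must reduce to a transverse $2$-plane --- is accurate; this is exactly what distinguishes \cite[Theorem~5.6]{AG} from the $2$-dimensional \cite{KS,GPS} versions. One point you gloss over is that the reduction-to-a-transverse-slice step requires showing that the scattering process in the slice is independent of the choice of slice along the joint and that the walls produced in each slice glue to honest polyhedral walls in $M_\RR$; in \cite{AG} this is handled carefully and is not entirely automatic. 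Your uniqueness argument is the standard one and is correct.
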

\subsubsection{From $\foD_{(X_{\Sigma},H)}$ to $\foD_{(X,D)}$}
\label{Sec: Degeneration}
To compare $\foD_{(X_{\Sigma},H)}$ with the canonical wall structure associated to $(X,D)$, first note that there is a natural piecewise-linear isomorphism
\[
\Upsilon:(M_{\RR},\Sigma)\rightarrow (B,\P).
\]
The existence of such a piecewise linear isomorphism follows from the definition of the tropicalization of
$(X,D)$ and we refer to \cite[\S6]{AG} for details. For every $1\leq i \leq s$ and $1\leq j \leq s_i$, let $E_i^j$ denote an exceptional
curve of the blow-up over the hypersurface $H_{ij}$. There is a natural splitting
\[N_1(X)=N_1(X_{\Sigma})\oplus\bigoplus_{ij} \ZZ E_i^j \,,\] in which
$N_1(X_{\Sigma})$ is identified with the set of curve classes
in $N_1(X)$ with intersection number zero with all exceptional
divisors. We will define $\Upsilon(\fod,f_{\fod})$, to describe a wall of $\foD_{(X,D)}$ on $B$. This definition
depends on whether $(\fod,f_{\fod})$ is incoming or not. 

If the wall is incoming, then by construction of $\foD_{(X_{\Sigma},H)}$
it is of the form
$(\fod,(1+t_{ij}z^{m_i})^{w_{ij}})$ for some positive integer
$w_{ij}$, see \eqref{eq_initial}. As $m_i$ is tangent to the cone of $\Sigma$ containing
$\fod$ and $\Upsilon$ is piecewise linear with respect to $\Sigma$,
$\Upsilon_*(m_i)$ makes sense as a tangent vector to $B$.
We then define
\[
\Upsilon(\fod,(1+t_{ij}z^{m_i})^{w_{ij}})
=(\Upsilon(\fod),(1+t^{E_i^j}z^{-\Upsilon_*(m_i)})^{w_{ij}}).
\]
If the wall $\fod$ is not incoming, then still the attached function $ f_{\fod}$ is necessarily a power-series in the expression $\prod_{i,j}(t_{ij}z^{m_i})^{a_{ij}}$, for some positive integers $a_{ij}$. We assume after refining the walls of $\foD_{(X_{\Sigma},H)}$ that $\fod\subseteq\sigma\in\Sigma$. Then
the data $\mathbf{A}=\{a_{ij}\}$ determines a curve class 
$\bar\beta_{\mathbf{A}}\in N_1(X_{\Sigma})$
as follows.
Up to a linear function, there exists a unique piecewise linear function
\[ \psi \colon M_\RR \rightarrow N_1(X_\Sigma) \otimes \RR\] with kink
along a codimension one cone $\rho$ being the class of the corresponding one-dimensional stratum $D_\rho \subset X$.
Then, we define
\[ \bar\beta_{\mathbf{A}} :=  \psi(-\sum_{i,j}a_{ij}m_i) 
+ \sum_{i,j}\psi(a_{ij}m_i)\,.
\]

Under the inclusion $N_1(X_{\Sigma}) \hookrightarrow N_1(X)$ 
given by the above mentioned splitting, we may view
$\bar\beta_{\mathbf{A},\sigma}$ as a curve class in $N_1(X)$, which we also denote by $\bar\beta_{\mathbf{A},\sigma}$. We then obtain a curve class
\[
\beta_{\mathbf{A}}=\bar\beta_{\mathbf{A}} - \sum_{ij} a_{ij} E_i^j.
\]
Further, as $m_{\out}:=-\sum_{ij} a_{ij}m_i$ is tangent to the
cone of $\Sigma$ containing $\fod$, as before $\Upsilon_*(m_{\out})$ makes sense
as a tangent vector to $B$. We may thus define the wall
\begin{equation}
\label{Eq: wall upsilon canonical}
\Upsilon(\fod,f_{\fod})=(\Upsilon(\fod),
f_{\fod}(t^{\beta_{\mathbf{A}}}z^{-\Upsilon_*(m_{\out})})).
\end{equation}
We then define 
\[
\Upsilon(\foD_{(X_{\Sigma},H)}) :=
\{\Upsilon(\fod,f_{\fod})\,|\, (\fod,f_{\fod})\in \foD_{(X_{\Sigma},H)}\}.
\]
A key result in \cite[Theorem 6.1]{AG}, then states:
\begin{theorem}
\label{Thm: HDTV}
$\Upsilon(\foD_{(X_{\Sigma},H)})$ is equivalent to $\foD_{(X,D)}$.
\end{theorem}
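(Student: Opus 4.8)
The plan is the standard two-part analysis of a scattering diagram — first pin down the wall supports and the curve-class data, then pin down the wall-crossing functions — organized around the uniqueness of consistent completions. By construction, $\Upsilon(\foD_{(X_\Sigma,H)})$ is a wall structure on $(B,\P)$ over $Q(X,D)$ with the canonical MVPL function $\varphi$, so the substance is to show (i) that it is \emph{consistent}, and (ii) that it has the same ``initial'' walls — those supported on the images $\Upsilon(\rho_i)$ of the rays $\rho_i$ and carrying the contributions of the exceptional curves — as $\foD_{(X,D)}$. Granting (i) and (ii), both $\Upsilon(\foD_{(X_\Sigma,H)})$ and $\foD_{(X,D)}$ are consistent wall structures on $(B,\P)$ over $Q(X,D)$ with the same MVPL function and the same initial data, hence equivalent by the uniqueness part of the Kontsevich--Soibelman lemma in its form for integral affine manifolds with singularities; it is this uniqueness (together with the identification of the initial walls) that makes ``$\foD_{(X,D)}=$ consistent completion of its exceptional-curve walls'' a meaningful characterization.

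For (i), the subtle point is that $\Upsilon$ is only piecewise linear with respect to $\Sigma$, so it does \emph{not} carry wall-crossing automorphisms to wall-crossing automorphisms; the discrepancy is exactly absorbed by the canonical kinks $\kappa_\rho=[D_\rho]$. Concretely I would verify that the substitution underlying \eqref{Eq: wall upsilon canonical} — sending $t_{ij}z^{m_i}$ to $t^{E_i^j}z^{-\Upsilon_*(m_i)}$, with the curve class $\bar\beta_{\mathbf A}=\psi(-\sum_{ij}a_{ij}m_i)+\sum_{ij}\psi(a_{ij}m_i)$ built from the PL function $\psi$ whose kinks are the boundary curve classes — is precisely what makes the path-ordered product of $\Upsilon(\foD_{(X_\Sigma,H)})$ around the $\Upsilon$-image of a joint, computed in the local affine chart of $B$ where crossing a codimension-one cell $\rho$ multiplies $z^m$ by $t^{\kappa_\rho}$, equal to the $\Upsilon$-transform of the path-ordered product of $\foD_{(X_\Sigma,H)}$ around that joint. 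Since the latter is the identity by the higher-dimensional Kontsevich--Soibelman lemma, so is the former, and $\Upsilon(\foD_{(X_\Sigma,H)})$ is consistent. Along the way one also records that every $\beta_{\mathbf A}$ is effective and lies in $Q(X,D)$ — this is where the interpretation of $\beta_{\mathbf A}$ as a class supported on $\AA^1$-curves of $(X,D)$ is used — so that the structure is genuinely defined over $Q(X,D)$ and not merely over a localization.

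For (ii), the initial walls of $\foD_{(X_\Sigma,H)}$ are the widgets $\foD_{m_i}$, so under $\Upsilon$ they carry $\prod_j(1+t^{E_i^j}z^{-\Upsilon_*(m_i)})^{w_{ij}}$ with $w_{ij}=D_\rho\cdot H_{ij}$, and the matching claim is that the initial wall of $\foD_{(X,D)}$ along $\Upsilon(\rho_i)$ carries the same function. Its wall-crossing function is, by \eqref{Eq: canonical wall}, $\exp\big(\sum_{k\ge1}\sum_j c_{k,ij}\,t^{kE_i^j}z^{-k\Upsilon_*(m_i)}\big)$ with the $c_{k,ij}$ controlled by the $\AA^1$-invariants of the multiple covers $kE_i^j$, and I would compute these by degenerating $(X,D)$ to the normal cone of $X_\Sigma$, obtaining $(\widetilde X,\widetilde D)$ whose special fibre is $X_\Sigma$ glued to blown-up $\PP^1$-bundles over the $D_{\rho_i}$, and applying the decomposition/degeneration formula for punctured log Gromov--Witten invariants \cite{ACGSI,ACGSII}: this reduces $N_{kE_i^j}$ to rubber invariants on the simple components, which are the classical multiple-cover numbers, and yields the identity $\exp\big(\sum_{k\ge1}c_k x^k\big)=(1+x)^{w_{ij}}$. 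This matches the two families of initial walls; combined with the consistency of the previous step and the uniqueness input, the theorem follows. Equivalently, one identifies $\foD_{(X,D)}$ directly with the localization $T_0\foD^1_{(\widetilde X,\widetilde D)}$ of the canonical wall structure of the degeneration and then with $\Upsilon(\foD_{(X_\Sigma,H)})$ after pulling the singularities of $\widetilde B_1$ out to the origin; the enumerative content is the same.

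The main obstacle is (ii) in its higher-dimensional form. In dimension two the required multiple-cover identity is the Gross--Pandharipande--Siebert computation \cite{GPS}, but for $n>2$ — once more than one hypersurface is blown up — the relevant $\AA^1$-curves acquire composite tropical types, the associated walls scatter into the focus--focus singularities of $B$, and one must show that the sum over tropical degenerations produced by the log Gromov--Witten degeneration formula reassembles, order by order in the ideal $I_0$, into the purely toric scattering $\foD_{(X_\Sigma,H)}$; that is, that ``pulling the singularities of $B$ out into $\Sigma$'' is compatible with the enumerative invariants. The curve-class bookkeeping of the consistency step — the interplay of $\psi$, the kinks, and the splitting $N_1(X)=N_1(X_\Sigma)\oplus\bigoplus_{ij}\ZZ E_i^j$ — is a second, more formal, source of difficulty; the uniqueness input and the widget computation itself are routine by comparison.
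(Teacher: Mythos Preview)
Your plan hinges on a uniqueness statement that is not available in the setting you need it. The Kontsevich--Soibelman lemma (as in the paper's Theorem on $\foD_{(X_\Sigma,H)}$) gives uniqueness of consistent completions for wall structures on $M_\RR$, an integral affine manifold \emph{without} singularities. You invoke ``its form for integral affine manifolds with singularities'' to conclude that two consistent wall structures on $(B,\P)$ with the same initial walls must be equivalent, but no such form exists in general: the discriminant locus $\Delta\subset B$ can emit walls (this is exactly what happens at a focus--focus point in dimension two), so ``initial data'' on $(B,\P)$ does not determine a consistent structure. Relatedly, $\foD_{(X,D)}$ is defined directly from punctured log Gromov--Witten invariants, not as a scattering completion of some specified incoming walls, so the phrase ``the initial wall of $\foD_{(X,D)}$ along $\Upsilon(\rho_i)$'' already presupposes what you are trying to prove.

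The paper's proof is organized precisely to circumvent this. Rather than comparing the two wall structures on $(B,\P)$, it passes to the degeneration $(\widetilde X,\widetilde D)$ and works on $\widetilde B_1$, where the singularities are pushed away from the origin. Localizing at $0\in\widetilde B_1$ yields a wall structure $T_0\foD^1_{(\widetilde X,\widetilde D)}$ on $M_\RR$, and \emph{there} the Kontsevich--Soibelman uniqueness is legitimately applied to identify it with $\nu(\foD_{(X_\Sigma,H)})$. The link back to $\foD_{(X,D)}$ is then made not by uniqueness but by two separate comparisons: the asymptotic wall structure $\foD^{1,\mathrm{as}}_{(\widetilde X,\widetilde D)}$ is identified with $\iota(\foD_{(X,D)})$ via deformation invariance of the log invariants, and the piecewise-linear map $\mu$ relates $T_0\foD^1_{(\widetilde X,\widetilde D)}$ to $\foD^{1,\mathrm{as}}_{(\widetilde X,\widetilde D)}$ using the radiant property of the walls. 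You mention this route in your final ``equivalently'', but it is not an alternative packaging of your argument --- it is the argument, and the degeneration is what replaces the missing uniqueness on singular $(B,\P)$. Your steps (i) and the curve-class bookkeeping are broadly correct in spirit, but they do not suffice without this structural input.
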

 Here, two wall structures are equivalent if they induce the same
wall-crossing automorphisms. In the remaining part of this section, we summarise the proof.

To prove Theorem \ref{Thm: HDTV}, we first consider a degeneration $(\widetilde X, \widetilde D)$ over $\AA^1=\mathrm{Spec}\mathbf{k}[t]$ obtained from a blow-up of the degeneration to the normal cone of $X_{\Sigma}$, with general fiber $(X,D)$, and central fiber given by
\begin{equation}
\label{Eq: central fiber of degeneration}
X_{\Sigma} \cup \bigcup_{i=1}^s \mathrm{Bl}_{H_i}(\PP(\mathcal{N}_{D_{\rho_i}|X_{\Sigma}} \oplus \mathcal{O}_{D_{\rho_i}}))   
\end{equation}
We then describe the canonical wall structure associated to the total space $(\widetilde X, \widetilde D)$. It is a wall structure on the tropicalization $(\widetilde B, \widetilde \P)$ of
of $(\widetilde X,\widetilde D)$ over the relevant monoid $Q(\widetilde X,\widetilde D)$.
The tropicalization of the degeneration map $\widetilde X \rightarrow \AA^1$ defines a projection map
\[ \widetilde p: \widetilde B \to \RR_{\geq 0}\,,  \]
and so we obtain a wall structure $\foD^1_{(\widetilde X, \widetilde D)}$ on $\widetilde B_1:= \widetilde p^{-1}(1)$ over $Q(\widetilde X, \widetilde D)$ by restriction to $p^{-1}(1)$, see \cite[\S 3.3]{AG} for details. The singularities of the integral affine manifold $\widetilde B_1$ are away from the origin: from $B=p^{-1}(0)$ to $\widetilde B_1=p^{-1}(1)$, the singularities are pushed away from the origin, see Figure \ref{Fig:3}. We use the notation $\P_1$ to denote the restriction of the polyhedral decomposition $\widetilde{\P}$ on $\widetilde{B}$ to $\widetilde{B}_1$. 
Localizing to the origin $0\in \widetilde B_1$ we obtain a wall structure 
\begin{equation}
T_0\foD^{1}_{(\widetilde X,\widetilde D)}:=\{(T_0\fod, f_{\fod})\,|\,
(\fod,f_{\fod})\in \foD^{1}_{(\widetilde X,\widetilde D)}, \quad 0\in \fod\} 
\end{equation}
on the tangent space  $T_0\widetilde B_1$ of $\widetilde B_1$ at the origin (see \cite[\S5]{AG}), and where $T_0 \fod$ is the tangent space at the origin of the wall $\fod$.
More precisely, as the origin is a smooth point of the integral affine structure on $\widetilde B_1$, we have a natural identification $M_{\RR} \to T_0\widetilde B_1$ such that the fan $\Sigma$ is $M_\RR$ is the restriction of $\P_1$ to $T_0\widetilde B_1$. Moreover, the MVPL function for $(\widetilde X, \widetilde D)$ restricts to the $PL$ function $\varphi_0$ for the toric pair $(X_\Sigma, D_\Sigma)$, that is, with kink $[D_\rho]$ across a codimension one cone $\rho$ of $\Sigma$, where $[D_\rho]$ is the corresponding toric curve class in $X_\Sigma$. Then, $T_0\foD^{1}_{(\widetilde X,\widetilde D)}$ is a wall structure on $(M_\RR,\Sigma)$ over $Q(\widetilde X,\widetilde D)$ as in Definition \ref{Def: walls}, where one uses the toric PL function $\varphi_0$, and where the order zero functions are $f_{\underline{\rho}}=1$ for every codimension one cone $\rho$ of $\Sigma$.

The main technical result of \cite{AG}, \cite[Theorem 6.2]{AG}, is a comparison between the wall structure $T_0\foD^{1}_{(\widetilde X,\widetilde D)}$ on $(M_\RR,\Sigma)$ over $Q(\widetilde X,\widetilde D)$ 
with the wall structure $\foD_{(X_{\Sigma},H)}$ on $M_\RR$ over $P$. There is a map 
\begin{align}\label{eq_nu}
\nu \colon \mathbf{k}[P]& \longrightarrow \mathbf{k}[\cP_0^+]\\ \nonumber t_{ij}z^{m_i} & \longmapsto z^{(m_i,\varphi_0(m_i)+F_i-E_i^j)}\,,
\end{align}
where $F_i$ denotes the class of a general $\PP^1$ fiber of $\mathrm{Bl}_{H_i}(\PP(\mathcal{N}_{D_{\rho_i}|X_{\Sigma}} \oplus \mathcal{O}_{D_{\rho_i}}))$.
Then, the walls of $T_0\foD^{1}_{(\widetilde X,\widetilde D)}$ are obtained from the walls $(\fod, f_{\fod})$ of $\foD_{(X_{\Sigma},H)}$ by applying $\nu$ to $f_\fod$:
\begin{equation} \label{eq_nu_comp}
T_0\foD^{1}_{(\widetilde X,\widetilde D)} \simeq \nu(\foD_{(X_{\Sigma},H)})\,.
\end{equation}


As a second step we consider the asymptotic wall structure $\foD_{(\widetilde X,\widetilde D)}^{1,\mathrm{as}}$, defined by 
\begin{equation}
\label{Eq: height one}
 \foD_{(\widetilde X,\widetilde D)}^{1,\mathrm{as}} :=   \{(\fod\cap \widetilde{B}_0, f_{\fod})\,|\, \hbox{$(\fod,f_{\fod})\in
\foD_{(\widetilde X,\widetilde D)}$ with $\dim\fod\cap\widetilde{B}_0=n-1$}\}.
\end{equation} 
We show that $\foD_{(\widetilde X,\widetilde D)}^{1,\mathrm{as}}$ is equivalent to $\iota(\foD_{(X,D)})$ -- here we view the canonical wall structure $\foD_{(X,D)}$ as a wall structure that is embedded into $\foD_{(\widetilde X, \widetilde D)}$, which we denote by $\iota(\foD_{(X,D)})$. Finally, we show that there is a natural piecewise-linear isomorphism $\mu: M_{\RR} \lra \widetilde B_0= \widetilde{p}^{-1}(0) \cong B$
which induces the equivalence of wall structures $\mu(T_0\foD^{1}_{(\widetilde X,\widetilde D)})$ and $\foD^{1,\mathrm{as}}_{(\widetilde X, \widetilde D)}$, and hence $\iota(\foD_{(X,D)})$:
\begin{equation}\label{eq_mu}
\mu(T_0\foD^{1}_{(\widetilde X,\widetilde D)}) \simeq \iota(\foD_{(X,D)}) \,.
\end{equation}
The map $\Upsilon$ is then given by the composition $\Upsilon= \mu \circ \nu$.



\begin{example}
\label{Ex0}
Let $X$ be the blow-up of a non-toric point in the interior of the boundary divisor $D_{\Sigma} \subset \PP^2$, and $D$ be the strict transform of $D_{\Sigma}$. The central fiber of the degeneration $\widetilde{X}$ of $X$ is then given as a union of $\PP^2$ and the Hirzebruch surface $F_1= \PP(\mathcal{O} \oplus \mathcal{O}(-1))$, as illustrated in Figure \ref{Fig:0}
\begin{figure}
\resizebox{.7\linewidth}{!}{\input{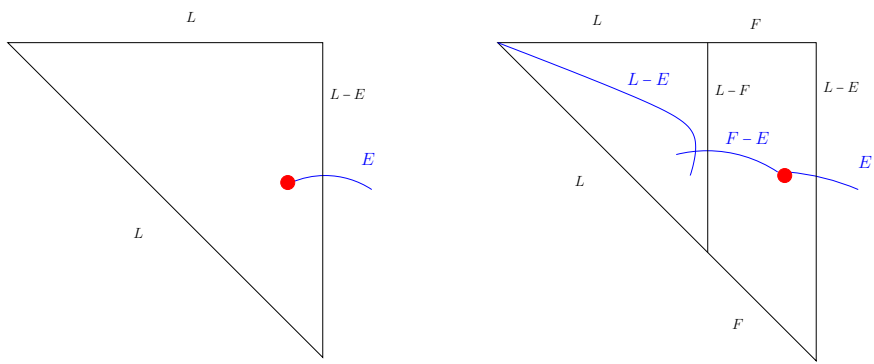}}
\caption{The momentum polytope picture associated to $X$, the blow-up of $\PP^2$ at a non-toric point, on the left and the central fiber of the degeneration of $\widetilde{X}$ of $X$ on the right. The curve classes corresponding to one dimensional cells are drawn in black. The exceptional curve $E$ illustrated on the left contributes to the canonical wall structure of $(X,D)$, while the curves illustrated on the right contribute to the canonical wall structure of the degeneration $(\widetilde X,\widetilde D)$.}
\label{Fig:0}
\end{figure}
A possible choice for the piecewise linear function 
$\psi$ is $\psi(x,y)=0$ on the cone $<(1,0),(0,1)>$,
$\psi(x,y)=-x L$ on the cone $<(0,1),(-1,-1)>$, 
$\psi(x,y)=-y L$ on the cone $<(0,1),(-1,-1)>$, 
where $L$ is the class of a line in $\PP^2$, generating 
$NE(\PP^2)$. We have $m_1=(1,0)$ and $m_2=(0,1)$.
So, when applying $\Upsilon$ to the function 
$1+t_1x=1+t_1 z^{m_1}$ attached to a non incoming wall, we obtain $1+xt^{\bar{\beta}_{\mathbf{A}}}$, where 
$\beta_{\mathbf{A}}=\bar{\beta}_{\mathbf{A}} -E$, and 
\[ \bar{\beta}_{\mathbf{A}}=\psi(-m_1)+\psi(m_1)=L+0=L\,.\]
In other words, the function 
$1+t_1 x$ attached to a non incoming wall becomes 
$1+xt^{L-E}$.  We illustrate the canonical wall structure associated to $(X,D)$, and the height one slice of the canonical wall structure associated to the degeneration $(\widetilde{X},\widetilde{D})$ in Figure \ref{Fig:3}.
\begin{figure}
\resizebox{.9\linewidth}{!}{\input{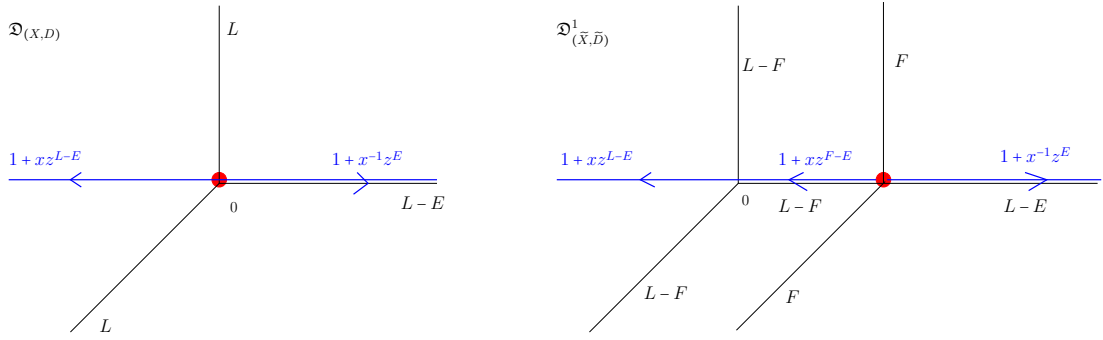}}
\caption{The canonical wall structure $\foD_{(X,D)}$ associated to the blow up of $\PP^2$ at a single non-toric point on the left, the height one slice of the canonical wall structure $\foD^1_{(\widetilde{X},\widetilde{D})}$ associated to the degeneration $(\widetilde{X},\widetilde{D})$ in the right. 
Here walls and attached functions are in blue and the one cells of the polyhedral decomposition as well as kinks of the PL functions on them are in black.}
\label{Fig:3}
\end{figure}
\end{example}

\subsection{Theta functions defined by broken lines}
\label{sec: theta functions defined by broken lines}
As shown in \cite{GHS}, the mirror to a log Calabi--Yau pair $(X,D)$ -- or rather the mirror to the complement $X \setminus D$ is a family $\mathrm{Spec}\mathcal{R}_{X^{\vee}}$ over the formal completion of $\mathrm{Spec}\mathbf{k}[Q(X,D)]$ at the maximl ideal $Q(X,D) \setminus \{0\}$,
where $\mathcal{R}_{X^{\vee}}$ denotes the \emph{ring of theta functions} associated to $(X,D)$. The generators of this ring, referred to as theta functions, are defined combinatorially via broken lines in the canonical wall structure $\foD_{(X,D)}$. Below we first review the definition of broken lines. In what follows, we show that the generators of the ring of theta functions for the mirror to a log Calabi--Yau pair $(X,D)$ as in \eqref{Eq: blow up} can actually be obtained by studying broken lines in the ``heart'' of the canonical wall structure. This will allow us to compute the theta functions concretely, and to obtain concrete equations for the mirrors.

To define broken lines on $(B,\P)$ we need some notations for the local rings defined by considering the monoids over the graphs of the MVPL function $\varphi$ on $B$ discussed in \S\ref{sec: wall structures}. As the restriction of such a function to maximal cells is linear, the monoid above the graph of such a cell takes a rather simple form. Indeed, for a maximal cell $\sigma\in\P^{\max}$ with $x\in\mathrm{Int}(\sigma)$, we set
\begin{equation}
\label{Eqn: R_fou}
R_\sigma:=\mathbf{k}[\shP^+_x]/I_x =  (\mathbf{k}[Q]/I)[\Lambda_\sigma]
\end{equation}
where we have a natural splitting $\shP^+_x = \Lambda_x \times Q$. On the other hand, for a codimension one cell $\rho$ of $\P$
not contained in the boundary of $B$, we set $R_{\rho}:= \mathbf{k}[\shP^+_x]/I_x$, for $x\in\mathrm{Int}(\rho)$. However, in this case the description of $\shP^+_x$ requires some more care, and involves the kinks of  -- see \cite[Equation ~(2.13)]{AG} for details. Now, we are ready to define broken lines.
\begin{definition}
\label{Def: broken line}
Let $Q$ be a toric monoid and $\foD$ a wall structure on $(B,\P)$ over $Q$. A
\emph{broken line} in $\foD$ is a piecewise linear continuous directed
path 
\begin{equation}
    \label{Eq:  broken line}
    \beta \colon (-\infty,0] \lra B \setminus \Sing(\foD)
\end{equation}
with $\beta(0)\not\in\mathrm{Supp}(\foD)$ and
whose image consists of finitely many line segments $L_1, L_2, \ldots , L_N$, such that 
$\dim L_i\cap\fod =0$ for any wall $\fod\in\foD$,
and each $L_i$ is compact except $L_1$. Further, we require that each $L_i\subseteq\sigma_i$
for some $\sigma_i\in\P^{\max}$. To each such segment we assign a monomial 
\[     m_i :=  \alpha_iz^{(v_i ,q_i)} \in \mathbf{k} [\Lambda_{L_i} \oplus Q^{\gp}].\]
Here $\Lambda_{L_i}$, as usual, denotes the group of integral tangent vectors
to $L_i$ and is hence a rank one free abelian group.
Each $v_i$ is non-zero and tangent to $L_i$, with $\beta'(t)=-v_i$
for $t\in (-\infty,0]$ mapping to $L_i$.
We require $\alpha_1=1$ and set $m_1 = z^{(v_1,0)}$. We refer to $v_1$ 
as the \emph{asymptotic direction} of the broken line, and to $\beta(0)$ as the \emph{end-point}. Given $L_i$ and its attached monomial $m_i$, we determine
$L_{i+1}$ and $m_{i+1}$ as follows. Let $L_i$ be the image under
$\beta$ of an interval $[t_{i-1},t_i]\subset (-\infty,0]$.
Let $I=[t_i-\epsilon,t_i+\epsilon]$ be an interval with $\epsilon$
chosen sufficiently small so that $\beta([t_i-\epsilon,t_i))$
and $\beta((t_i,t_i+\epsilon])$ are disjoint from $\mathrm{Supp}(\foD)$.
There are two cases:
\begin{itemize}
    \item $\beta(t_i)\in\mathrm{Int}(\sigma_i)$ for $\sigma\in\P^{\max}$.
Then we obtain a wall-crossing automorphism $\theta_{\beta|_I,\foD}:
R_{\sigma_i}\rightarrow R_{\sigma_i}$, and $m_i$ may be viewed as an element
of $R_{\sigma_i}$ via the inclusion $\Lambda_{L_i}\subseteq\Lambda_{\sigma_i}$. 
We expand $\theta_{\beta|_I,\foD}(m_i)$ as
a sum of monomials with distinct exponents, and require that
$m_{i+1}$ be one of the terms in this sum.
\item $\beta(t_i)\in\mathrm{Int}(\rho)$ for $\rho\in\P$ a codimension one
cell. If $y=\beta(t_i-\epsilon)$, $y'=\beta(t_i+\epsilon)$, $x=\beta(t_i)$,
we may view $(v_i,q_i)\in\shP^+_y$. By parallel transport to
$x$ along $\beta$, we may view $(v_i,q_i)\in \shP_x$. In fact,
$(v_i,q_i)\in \shP_x^+$ by the assumption that $\beta'(t_i-\epsilon)=-v_i$
and \cite[Proposition~2.7]{AG}. Thus, we may view
$m_i\in R_{\rho}$, and then $m_{i+1}$ is required to be a term
in $\theta_{\beta|_I,\foD}(m_i)$. A priori, $m_{i+1}\in R_{\rho}$, but
it may be viewed as a monomial in $R_{\sigma_{i+1}}$ by parallel
transport to $y'$. 
\end{itemize}
We call the monomial $a_Nz^{(v_N,q_N)}$, carried by the final segment $L_N$ of a broken line $\beta$ the \emph{final monomial} carried by $\beta$. If  $v_1=\ldots = v_N$ we say $\beta$ is \emph{never-bending}.
\end{definition}

Definition \ref{Def: broken line}, roughly put, says that a broken line $\beta$ with asymptotic direction $v$, starts its life coming from infinity with a monomial $z^{(v,0)}$ and ends at a fixed endpoint in $B$. Each time $\beta$ crosses a wall of $\foD$ it either goes straight, or bends in the direction of the wall. If it goes without bending it only may gets a contribution from the kink of the PL function, otherwise when it bends the monomial $z^{(v,0)}$ gets multiplied with the monomial term in the wall crossing function attached to the wall. 

Now we are ready to define theta functions from broken lines following \cite[\S~3.3]{GHS}. 
\begin{definition}
\label{Def: theta defined by broken lines}
Let $\foD$ be a wall structure on $(B,\P)$ over $Q$. Fix a general point $p$ in the interior of a cell $\sigma \in \P^{\max}$. Let $m \in B$ be an asymptotic direction for $\P$, that is, a direction of an unbounded ray of $\P$. Then, the \emph{theta function defined by broken lines in $\foD$ with asymptotic direction $m$ and end point $p$} is defined by 
 \begin{equation}
     \label{Eq: theta function defined by broken line}
     \vartheta_m(p) := \sum_{\beta} a_N z^{(v_N,q_N)} \in R_{\sigma}
 \end{equation}
where the sum runs over all broken lines $\beta$ with asymptotic direction $m$, and end-point $p$, and $a_N z^{(v_N,q_N)} $ are the corresponding final monomials, as in Definition \ref{Def: broken line}.
\end{definition}
Given a log Calabi--Yau pair $(X,D)$ with tropicalization $(B,\P)$, it is shown in \cite[\S~3.3]{GHS} that the theta functions defined by the broken lines in the canonical wall structure $\foD_{(X,D)}$, with end-point at a general fixed point and asymptotic directions given by asymptotic directions of $\P$, form the generators for the coordinate ring for the mirror to $(X,D)$. This is easy to verify for the case of a toric log Calabi--Yau pair $(X_{\Sigma},D_{\Sigma})$ -- in this situation we view the tropicalization $(\RR^n,\Sigma)$ endowed with the data of a PL function as discussed in \S\ref{Sec: mirrors to toric} a trivial wall structure, where the wall crossing functions on all walls given by codimension one cells of $\Sigma$, are identity.
\begin{example}
The theta functions generating the mirror to the toric log Calabi--Yau pair $(X_{\Sigma},D_{\Sigma})$ for $X_{\Sigma} = \PP^2$, defined by never-bending broken lines are illustrated in Figure \ref{Fig:brokenP2}. Note that they agree with the theta functions in \eqref{Eq: thetas for P2}, defined without using broken lines. 
\begin{figure}
\resizebox{.9\linewidth}{!}{\input{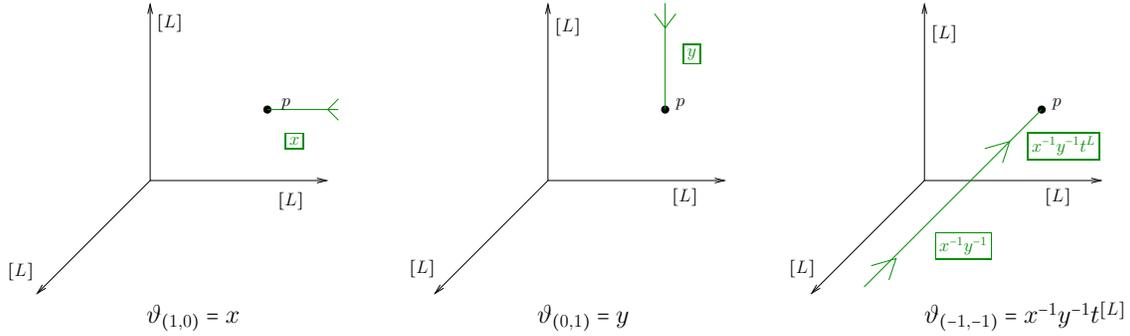}}
\caption{The theta functions generating the coordinate ring for the mirror to $(\PP^2,D_{\Sigma})$ are defined by never-bending broken lines. These broken lines are illustrated in green, and the monomials carried by each of the segments of these broken lines are drawn in boxes also in green. The $[L]$ along each ray is the kink of the PL function.}
\label{Fig:brokenP2}
\end{figure}
\end{example}
Generally, due to the existence of the discriminant locus in the tropicalization $(B,\P)$ of a non-toric log Calabi--Yau pair $(X,D)$, it is challenging keeping track of all broken lines defining theta functions. In the following section, we show that in the situation when $(X,D)$ arises as a blow-up as in \eqref{Eq: blow up}, the generators of the coordinate ring to the mirror of $(X,D)$ are given by broken lines in \emph{the heart of the canonical wall structure associated to $(X,D)$}, and these are easier to keep track of.

\section{The heart of the canonical wall structure}
\label{Sec: the heart of the canonical wall structure}
Let $(X,D)$ be a log Calabi--Yau pair obtained as a blow-up of a toric pair as in \eqref{Eq: blow up} and $(\widetilde{X},\widetilde{D})$ its degeneration described in \S \ref{Sec: Degeneration}. Recall that the corresponding wall structure $T_0\foD^{1}_{(\widetilde X,\widetilde D)}$ in \eqref{Eq T0 intro} is obtained by restricting the canonical wall structure of $(\widetilde X,\widetilde D)$ to height one, and localizing around the origin. In this section we define the \emph{heart of the canonical wall structure associated to $(X,D)$} using $T_0\foD^{1}_{(\widetilde X,\widetilde D)}$. For this, we first fix a monoid defined as follows.

\begin{definition}
\label{Def: M}

Let $(X,D)$ be the blow-up of a toric log Calabi--Yau pair $(X_\Sigma, D_\Sigma)$ as in \eqref{Eq: blow up} along a union of hypersurfaces $H_i \subset D_{\rho_i}$ in the toric boundary where $1\leq i \leq s$, and where $D$ is the strict transform of the toric boundary divisor $D_{\Sigma} \subset X_{\Sigma}$. Denote by $H_i=\bigcup_{j=1}^{s_i} H_{ij}$ the decomposition of $H_i$ into connected components, and $E_i^j$ an exceptional curve over $H_{ij}$. We define \emph{the relevant monoid to $(X,D)$ localized at $E_i^j$}, as the monoid obtained from the relevant monoid associated to $(X,D)$ in \eqref{eq: monoid for XD} by adding the opposite of each exceptional curve $E_i^j$, and denote it by 
  \begin{equation}
    \label{eq: heart monoid}
Q_{E}(X,D) := \langle [D_{\rho}], [ C ], -[ E_i^j ] \, \mathrm{where \,} \, 1\leq i \leq s \, \mathrm{and \,} 1\leq j \leq s_i \rangle_{\ZZ}\,,
\end{equation}
where $[D_{\rho}]$ and $[ C ]$ are as in \eqref{eq: monoid for XD}.
\end{definition}

Note that unlike $Q(X,D)$, the monoid $Q_E(X,D)$ has non-trivial invertible elements: we have \[ Q_E(X,D)^\star=\bigoplus_{i=1}^s \bigoplus_{j=1}^{s_i} \Z [E_i^j]\,.\]

\begin{definition} \label{def_wall_str}
A \emph{wall structure on $(M_\RR ,\Sigma)$ over $Q_{E}(X,D)$} is a wall structure as in Definition \ref{Def: walls}, where $B=M_\RR$ with the integral affine structure induced by $M \subset M_\RR$,  $\P=\Sigma$, the MVPL function is the toric PL function $\varphi_0$, and the order zero functions are given by 
\[f_{\underline{\rho}}=\prod_{j=1}^{s_i} (1+z^{(m_i, \varphi_0(m_i))-E_i^j)})^{D_\rho \cdot H_{ij}}\,,\] when $\rho$ is a codimension one cone of $\Sigma$ containing the ray $\rho_i$, and $f_{\underline{\rho}}=1$ if $\rho$ is a codimension one cone of $\Sigma$ containing none of the rays $\rho_i$.
\end{definition}

\begin{definition}
\label{def: heart}
Let $(X,D)$ be a log Calabi--Yau pair with tropicalization $(B,\P)$, obtained from a toric log Calabi--Yau pair $(X_{\Sigma},D_{\Sigma})$ by a blow-up as in \eqref{Eq: blow up}. The \emph{heart of the canonical wall structure} associated to $(X,D)$, denoted by $\foD_{(X,D)}^{\heart}$, is the wall structure on $(M_\RR,\Sigma)$ over $Q_{E}(X,D)$, obtained from the wall structure $T_0\foD^{1}_{(\widetilde X,\widetilde D)}$ in \eqref{Eq T0 intro} by setting all classes $F_i=0$, where $F_i$ denotes the class of a general $\PP^1$ fiber of $\mathrm{Bl}_{H_i}(\PP(\mathcal{N}_{D_{\rho_i}|X_{\Sigma}} \oplus \mathcal{O}_{D_{\rho_i}}))$, and $H_i$ is as in Definition \ref{Def: M}.
\end{definition}

Note that by the construction of the degeneration $(\widetilde X,\widetilde D)$, elements of the monoid $Q(\widetilde{X},\widetilde{D})$ are contained in the monoid generated by the union of $Q(X,D)$ and the fiber classes $\pm F_i$'s. As there are no relations between the fiber classes $F_i's$ and the classes in $Q(X,D)$, we have indeed a well defined morphism of monoids $Q(\widetilde{X},\widetilde{D}) \to Q(X,D)$ given by setting $ \pm F_i=0$.

Moreover, one can check that $\foD_{(X,D)}^{\heart}$ is indeed a wall structure on $(M_\RR,\Sigma)$ over $Q_E(X,D)$. If $(\fod, f_\fod)$ is a non incoming wall of $T_0\foD^{1}_{(\widetilde X,\widetilde D)}$, then it follows from \eqref{eq_mu} that $(\fod, f_\fod)$ can be viewed as a wall of $\foD_{(X,D)}$, and so the curve classes appearing in $f_\fod$, which are a priori in $Q(\widetilde X,\widetilde D)$, are actually contained in $Q(X,D)$. In particular, setting $F_i=0$ has no effect on the non incoming walls $(\fod, f_\fod)$, and we have $f_\fod \equiv 1$ modulo $I_0=Q_E(X,D) \setminus Q_E(X,D)^\star$. On the other hand,  it follows from the comparison with $\foD_{(X_\Sigma,H)}$ given in \eqref{eq_nu}-\eqref{eq_nu_comp} and from the description of incoming walls of $\foD_{(X_\Sigma,H)}$ in \eqref{eq_initial} that the initial walls of $T_0\foD^{1}_{(\widetilde X,\widetilde D)}$ are 
\[ (\rho, \prod_{j=1}^{s_i}(1+z^{(m_i,\varphi_0(m_i)+F_i-E_i^j})^{D_{\rho}\cdot H_{ij}})\,,\]
for codimension one cones $\rho$ of $\Sigma$ containing a ray $\rho_i$.
Setting $F_i=0$, we obtain that the initial walls of $\foD_{(X,D)}^{\heart}$ are 
$(\rho, f_{\underline{\rho}})$, where $f_{\underline{\rho}}$ is as in Definition \ref{def_wall_str}. Finally, note that $\foD_{(X,D)}^{\heart}$ is a consistent wall-structure because the wall-structure $T_0\foD^{1}_{(\widetilde X,\widetilde D)}$ is consistent.


In the remaining part of this section we prove our main result, Theorem \ref{thm: heart} which shows that the mirror to a log Calabi--Yau pair can be effectively constructed using the heart of the associated canonical wall structure. For this, we first review the analysis we carried in \cite[Lemma 4.20]{AG} to determine the monodromy around each of the standard pieces of the discriminant locus in the tropicalization $(\widetilde B, \widetilde{\P})$ of $(\widetilde X, \widetilde D)$.
We fix some notations to do this. 
\begin{notation}
\label{notations needed}
Let $\rho_i$ be a ray in the toric fan $\Sigma$ with primitive generator $m_i$, corresponding to a divisor $D_{\rho_i} \subset X_{\Sigma}$. We define the toric fan corresponding to $D_{\rho_i}$ by
\begin{equation}
    \label{Eq: sigma rho}
\Sigma(\rho_i)=\{(\sigma+\RR\rho_i)/\RR\rho_i\,|\,\sigma\in\Sigma,
\rho_i\subseteq\sigma\}.
\end{equation}
We use the notation $\ul \rho$ for a codimension one cell of $\Sigma(\rho_i)$ satisfying $\ul{\rho}=(\rho+\RR\rho_i)/\RR\rho_i$. We denote by $\bar\rho_i:=\rho_i\times\RR_{\ge 0}\in \Sigma \times \RR_{\geq 0}$. Note that $\bar\rho_i$ is generated by $(m_i,0)$ and $(0,1)$. Recall that a standard piece of the discriminant locus in $\overline{B}_1$ is located at $(m_i,1)$, and the ray connecting it to the origin it splits the $\bar\rho_i$ into a of two cones: one of them generated by $(m_i,1)$ and $(0,1)$ denoted by $\tilde \rho$ and the other generated by $(m_i,1)$ and $(0,1)$ which we denote by  $\tilde \rho'$. We use the notation $\rho_0$ and $ \rho_\infty$ for the intersections of $\tilde \rho$ and $\tilde \rho_\infty$ with $\widetilde{B}_1$ respectively. Moreover we denote the maximal cells adjacent to $\rho_0$ and $\rho_\infty$ respectively by $\sigma'^{\pm}$ and $\sigma^{\pm}$  as illustrated in Figure \ref{Fig:note}. 
\end{notation}
To describe the monodromy around the singular locus $\Delta \subset \widetilde B_1$, we need the data of a PL function, which is different than the PL function $\varphi$ we fix through \S\ref{sec: wall structures} to describe wall structures (see \cite[Eqn3.14]{AG} for details). We review the description this function in a moment. Denote by $D_{\rho_i}\subset X_{\Sigma}$ the divisor corresponding to a ray $\rho_i$ with direction $m_i$ and let $H_i \subset D_{\rho_i}$ be a hypersurface as in \eqref{Eq: hypersurfaces H}. Denote by $\Sigma(\rho_i)$ the toric fan corresponding to $D_{\rho_i}$ defined as in \eqref{Eq: sigma rho}. Then, there is a piecewise linear function on $\Sigma(\rho_i)$, given by
\begin{equation}
    \label{Eq: varphi-i}
    \varphi_i:M_{\RR}/\RR\rho_i \rightarrow \RR
\end{equation}
corresponding to the divisor $H_i$, defined as follows: if $H_i$
is linearly equivalent to a sum $\sum a_\tau D_{\tau}$ of boundary
divisors, where $\tau$ ranges over rays in $\Sigma(\rho_i)$,
then $\varphi_i(m_{\tau})=a_{\tau}$ for $m_{\tau}$ a primitive generator of $\tau$. 
\begin{proposition}
\label{Prop: monodormy}
The monodromy around a loop $\gamma$ in $\widetilde{B}_1$ around a piece of the discriminant locus on a wall with direction $m_i$ is given by the formula  
 \begin{equation}  
 \label{Eq: afffine monodromy}
T_{\gamma}(m)= m + \kappa_{\ul \rho}^i\cdot \delta(m) \cdot m_i.
\end{equation}
where $\kappa_{\ul \rho}^i$ is the kink of the PL function $\varphi_i$ defined in \eqref{Eq: varphi-i} along a codimension one cone $\ul{\rho}\in\Sigma(\rho_i)$ given by $\ul{\rho}=(\rho+\RR\rho_i)/\RR\rho_i$ for some codimension one cone
$\rho\in\Sigma$ containing $\rho_i$, and $\delta:M/\ZZ m_i\to\ZZ$ is the quotient by
$\Lambda_{\ul\rho}$ (see \cite[Eqn~$3.28$]{AG}).
\end{proposition}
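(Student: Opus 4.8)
The proof is essentially a translation of the affine-geometry analysis of \cite[Lemma~4.20]{AG} into the present notation, the substantive point being to identify the datum generating the monodromy with the kink of the piecewise linear function $\varphi_i$ of \eqref{Eq: varphi-i}. I would argue as follows. Near the standard discriminant piece through $v_i:=(m_i,1)$, the integral affine structure on $\widetilde{B}_1$ is built, as in \cite[Eqn~3.14]{AG}, by gluing standard charts on the open stars of the maximal cells $\sigma^{\pm},\sigma'^{\pm}$ of Notation~\ref{notations needed}; the only non-toric contribution comes from the component $\mathrm{Bl}_{H_i}(\PP(\mathcal{N}_{D_{\rho_i}|X_{\Sigma}}\oplus\mathcal{O}_{D_{\rho_i}}))$ of the central fiber \eqref{Eq: central fiber of degeneration}, and this contribution depends only on the divisor class of $H_i$ inside $D_{\rho_i}$, i.e.\ on $\varphi_i$. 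Consequently each chart transition is an integral affine transformation that is the identity plus a shear in the direction $m_i$ whose size is the relevant change of slope of $\varphi_i$.

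Given this, I would take a small loop $\gamma$ linking the discriminant inside the $2$-plane transverse to it and crossing, in order, the codimension-one cells $\rho_0$, $\rho_\infty$ together with the remaining codimension-one cells of $\P_1$ bounding $\sigma^{\pm}$ and $\sigma'^{\pm}$. Choosing charts so that all transitions along $\gamma$ except the one across $\underline{\rho}=(\rho+\RR\rho_i)/\RR\rho_i$ are the identity --- which is possible because only that chamber boundary ``sees'' the blow-up $\mathrm{Bl}_{H_i}$ --- the remaining transition is the shear determined by $\varphi_i$. By definition of the kink (compare \eqref{Eqn: kink}), this shear moves a tangent vector $m$ by $\kappa^i_{\underline{\rho}}\cdot\delta(m)\cdot m_i$, where $\delta\colon M/\ZZ m_i\to\ZZ$ records the component of $m$ transverse to $\underline{\rho}$ and the sign is fixed by requiring $\delta$ to be non-negative on vectors pointing out of $\underline{\rho}$ on the prescribed side. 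Composing the transitions around $\gamma$ therefore yields $T_{\gamma}(m)=m+\kappa^i_{\underline{\rho}}\,\delta(m)\,m_i$, which is \eqref{Eq: afffine monodromy}.

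Finally I would record that the formula is independent of the auxiliary choices: changing the loop or the base chamber conjugates $T_\gamma$ by a transformation fixing $m_i$, which does not affect \eqref{Eq: afffine monodromy}; and in the toy case $n=2$ with $H_i$ a single reduced point of $D_{\rho_i}$ one has $\kappa^i_{\underline{\rho}}=1$ and recovers the focus--focus monodromy of Example~\ref{Ex: trop X}. The main obstacle is the bookkeeping in the middle step --- correctly identifying which chamber-boundary transition carries the shear, matching its magnitude with $\kappa^i_{\underline{\rho}}$, and pinning down all orientation conventions so that the shear appears with the stated sign and direction. Once the charts from \cite[Lemma~4.20]{AG} are written out explicitly this is a short linear-algebra verification, but the orientation bookkeeping is the delicate part.
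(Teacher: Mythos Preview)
The paper does not actually prove this proposition: its entire proof is the single line ``See \cite[Lemma~$3.6$]{AG}.'' Your proposal therefore supplies considerably more than the paper does, and your sketch of how the monodromy arises from a single shear transition governed by the kink of $\varphi_i$ is a reasonable outline of the argument one would extract from \cite{AG}.

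One point worth flagging: you anchor your argument on \cite[Lemma~4.20]{AG}, presumably because the paragraph introducing the proposition in this paper mentions that lemma. However, the paper's own proof cites \cite[Lemma~3.6]{AG} instead. In \cite{AG}, Lemma~3.6 is where the affine structure on $\widetilde{B}$ and the monodromy formula are actually established, whereas Lemma~4.20 is a later result comparing wall functions $f_y$ and $f_{y'}$ across the discriminant (and is what this paper uses in the proof of Theorem~\ref{thm: heart}). Your sketch is not wrong in spirit, but if you want to match the paper's citation you should point to Lemma~3.6 rather than 4.20; the chart construction you describe via \cite[Eqn~3.14]{AG} is indeed the relevant input there.
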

\begin{proof}
See \cite[Lemma~$3.6$]{AG}.
\end{proof}
Using the description of monodromy in \eqref{Eq: afffine monodromy}, we define the parallel transport map as follows. With the notation of Proposition \ref{Prop: monodormy}, it follows from \eqref{Eq: afffine monodromy} that there is a natural parallel transport map on $\widetilde{B}_1$, given by
\begin{align}
   \label{eq:parallel transport}
   \wp:\mathbf{k}[\Lambda][Q(X,D)] & \longrightarrow 
\mathbf{k}[\Lambda][Q(X,D)] \\
  \nonumber
   t^qz^m & \longmapsto t^qz^{m + \langle m,n \rangle m_i} 
\end{align}
where $n \in N = \mathrm{Hom}(M,\ZZ)$ is the normal vector to $\rho_i$ pointing away from $\widetilde{\sigma}^+ \cup \widetilde{\sigma}'^+$. Note that by the definition of lift $\delta: M/\ZZ m_i\to\ZZ$ in Proposition \ref{Prop: monodormy} it naturally lifts to $n: M \to\ZZ $, so that $\delta(m)=n$.
\begin{figure}
\resizebox{.9\linewidth}{!}{\input{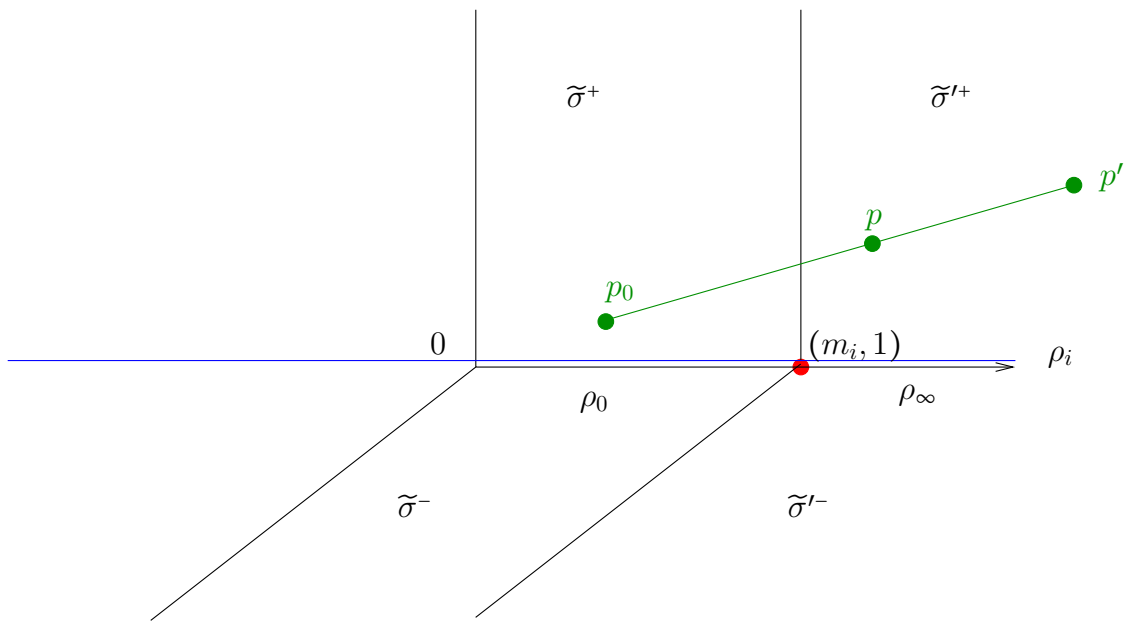}}
\caption{The points $p,p'$ and $p_0$}
\label{Fig:note}
\end{figure}
In \cite[\S4]{AG} we carried a rigorous analysis using the parallel transport map around the pieces of the discriminant locus of the wall structure $\foD^1_{}(\widetilde X, \widetilde D)$. A key point in that analysis
is a proof that this wall structure is \emph{radiant} \cite[Definition~$4.7$]{AG}. We will need the following result of \cite[Theorem~$4.22$]{AG} as a consequence of this property of the wall structure $\foD^1_{}(\widetilde X, \widetilde D)$:
\begin{theorem}
\label{Thm rays}
Let $S=(M_{\RR}\setminus\{0\})/\RR_{> 0}$ be the
sphere parameterizing rays from the origin in $\widetilde{B}$. Choose a general point $s\in S$, such that the corresponding ray $\rho_s$ does not intersect the discriminant locus. For any point $x\in \rho_s$, denote by $f_x$ be the product of all the wall crossing functions attached to walls containing $x$. Then, there are two possibilities: either $f_x$ is independent of $x$ for any $x$, or $\rho_s$ is contained in the union of two maximal cells such that if $y$ and $y'$ are two points contained in these cells, $f_y$ and $f_{y'}$ are related by the parallel transport map in \eqref{eq:parallel transport}.
\end{theorem}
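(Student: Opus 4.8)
The plan is to derive the statement from the \emph{radiance} of $\foD^1_{(\widetilde X,\widetilde D)}$ just invoked (\cite[Definition~$4.7$]{AG}) together with the monodromy formula of Proposition \ref{Prop: monodormy}. Concretely, radiance says that, after passing to the tropicalisation $\widetilde B$ of the whole degeneration $(\widetilde X,\widetilde D)$, the support of the wall structure, the discriminant locus and the polyhedral decomposition are all unions of polyhedral cones with apex at the cone point $0$ of $\widetilde B$; this in turn is inherited from the fact that the tropical $\AA^1$-curves contributing to the canonical wall structure of $(\widetilde X,\widetilde D)$ occur in scaling-invariant families, via the fibrewise $\mathbb{G}_m$-action on $\widetilde X\to\AA^1$. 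I would take radiance as an input from \cite[\S4]{AG} rather than reprove it.

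Given this, I would analyse a generic ray $\rho_s$ from the origin $0\in\widetilde B_1$ by lifting it to the two-dimensional cone $\Pi_s\subset\widetilde B$ spanned by $\rho_s$ and the cone point. Since all walls of $\foD_{(\widetilde X,\widetilde D)}$ are cones with apex $0$, restricting to $\Pi_s$ yields a two-dimensional wall structure whose walls are rays from $0$ and whose consistency is the two-dimensional Kontsevich--Soibelman statement; for a generic choice of $s$ the cone $\Pi_s$ contains no wall in its interior and meets no discriminant stratum, so the product $f_x$ is literally independent of $x\in\rho_s\setminus\{0\}$. The second alternative in the statement then occurs exactly when $\rho_s$ lies in the relative interior of a single codimension one cell $\rho$ of $\P_1$ whose two adjacent affine charts are glued by the shear $m\mapsto m+\kappa^i_{\ul\rho}\,\delta(m)\,m_i$ of \eqref{Eq: afffine monodromy}: then $f_x$ is constant within each of the two maximal cells adjacent to $\rho$, but the two constant values are read in the two charts and hence differ by precisely the parallel transport map $\wp$ of \eqref{eq:parallel transport}, which by construction \emph{is} that shear.

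The step I expect to be the main obstacle is the bookkeeping that excludes all other possibilities --- namely showing that for a generic ray disjoint from $\Delta$ the only place $f_x$ can jump is across a codimension one cell of $\P_1$ with nontrivial affine monodromy, and that across such a cell the jump is exactly one application of $\wp$ in a single direction $m_i$, with no accumulation of several distinct shears and no genuine wall-crossing contribution. For this I would combine radiance (which forces the cells of $\P_1$ to refine $\Sigma$ only along the rays $\rho_i$, where $\Delta$ sits) with Proposition \ref{Prop: monodormy} (which identifies the monodromy around each such stratum) and with consistency of $\foD^1_{(\widetilde X,\widetilde D)}$ around the codimension two joints adjacent to $\rho$. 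Finally, tracking how the attached functions --- which take values in the monoid algebra over $Q(X,D)$ --- transform under $\wp$, and checking this against the comparison maps $\nu$ and $\mu$ of \S\ref{Sec: Degeneration}, is where the remaining work lies; this is the content of \cite[Theorem~$4.22$]{AG}, and I would organise the write-up along these lines.
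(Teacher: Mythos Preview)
The paper does not prove this theorem. It is stated as a quotation of \cite[Theorem~4.22]{AG}, introduced by ``We will need the following result of \cite[Theorem~4.22]{AG} as a consequence of this property of the wall structure $\foD^1_{(\widetilde X,\widetilde D)}$,'' with radiance named as the relevant property. So there is no proof in the paper to compare your sketch against; you are effectively proposing a proof outline for a result the paper imports.

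That said, your outline is in the right spirit and matches what the paper says about the result: radiance is indeed the structural input, and the monodromy formula of Proposition~\ref{Prop: monodormy} is what identifies the jump with $\wp$. Two points of geometry are off, however. First, in the second alternative the ray $\rho_s$ does not ``lie in the relative interior of a single codimension one cell $\rho$ of $\P_1$''; it passes through the interiors of two adjacent maximal cells (as in Figure~\ref{Fig:note}: $\widetilde\sigma^{\pm}$ on one side of the discriminant, $\widetilde\sigma'^{\pm}$ on the other), crossing a codimension-one face between them. The two values $f_y,f_{y'}$ are the products of wall functions along the portions of $\rho_s$ in each maximal cell. Second, your ``lift to the cone $\Pi_s\subset\widetilde B$'' is not quite the right picture: the origin $0\in\widetilde B_1$ is the point at height $1$ over the cone point of $\widetilde B$, not the cone point itself, so the relevant radiance statement is the one intrinsic to $\widetilde B_1$ (walls, discriminant and cells in $\widetilde B_1$ are invariant under scaling from $0\in\widetilde B_1$), not a two-dimensional reduction inside $\widetilde B$. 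Once you phrase radiance this way, the dichotomy follows directly: scaling invariance forces $f_x$ to be constant within each maximal cell met by $\rho_s$, and a generic ray meets at most two such cells, with the change across their common face governed by the affine monodromy \eqref{Eq: afffine monodromy}, i.e.\ by $\wp$.
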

Now we are ready to state our main result:
\begin{theorem}
\label{thm: heart}
The ring of theta functions defined by broken lines in $\foD_{(X,D)}^{\heart}$ is isomorphic to the coordinate ring of the mirror to $(X,D)$.
\end{theorem}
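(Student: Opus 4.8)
The plan is to produce a ring isomorphism identifying, for each asymptotic direction $m$ of $\Sigma$, the theta function $\vartheta_m$ of the mirror to $(X,D)$ (defined by broken lines in $\foD_{(X,D)}$ on $(B,\P)$) with the theta function $\vartheta_m^{\heart}$ defined by broken lines in $\foD_{(X,D)}^{\heart}$. Since by \cite{GHS,GSCanScat}, reviewed in \S\ref{sec: theta functions defined by broken lines}, the coordinate ring of the mirror is exactly the ring generated by the $\vartheta_m$ over the completion of $\mathbf{k}[Q(X,D)]$ with multiplication governed by broken-line counts, it suffices to identify the generators compatibly with these structure constants. I would factor this identification through the degeneration $(\widetilde X,\widetilde D)$: first transport broken lines in $\foD_{(X,D)}$ on $(B,\P)$ to broken lines in $T_0\foD^1_{(\widetilde X,\widetilde D)}$ on $(M_\RR,\Sigma)$ via the piecewise linear isomorphism $\mu$, then apply the monoid homomorphism $p\colon Q(\widetilde X,\widetilde D)\to Q_E(X,D)$ setting $F_i\mapsto 0$.

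\textbf{Step 1: piecewise linear invariance.} Broken lines, hence theta functions and their structure constants, are preserved under piecewise linear isomorphisms of wall structures compatible with the MVPL data: a PL isomorphism $\phi\colon (B_1,\P_1)\to (B_2,\P_2)$ with $\phi(\foD_1)\simeq\foD_2$ carries a broken line with asymptotic direction $m$ to one with asymptotic direction $d\phi(m)$, the attached monomials being related by the induced map on the sheaves $\shP^+$. Applying this to $\mu$ and invoking \eqref{eq_mu} — which gives $\mu(T_0\foD^1_{(\widetilde X,\widetilde D)})\simeq\foD^{1,\mathrm{as}}_{(\widetilde X,\widetilde D)}\simeq\iota(\foD_{(X,D)})$ — together with the observation that broken lines of $\foD^{1,\mathrm{as}}_{(\widetilde X,\widetilde D)}\simeq\iota(\foD_{(X,D)})$ with asymptotic directions in the height-zero slice coincide with broken lines of $\foD_{(X,D)}$ (the embedding $\iota$ and the passage to the height-zero slice not affecting them), one identifies $\mathcal{R}_{X^\vee}$ with the ring of theta functions defined by broken lines in $T_0\foD^1_{(\widetilde X,\widetilde D)}$ on $(M_\RR,\Sigma)$.

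\textbf{Step 2: setting $F_i=0$, and the main obstacle.} By Definition~\ref{def: heart} the heart is $\foD^{\heart}_{(X,D)}=p\big(T_0\foD^1_{(\widetilde X,\widetilde D)}\big)$, with $p$ well defined because the fiber classes are independent of $Q(X,D)$ in $N_1$; equivalently, in the description \eqref{eq_nu}–\eqref{eq_nu_comp} of $T_0\foD^1_{(\widetilde X,\widetilde D)}$ via the map $\nu$, composing $\nu$ with $p$ simply erases $F_i$ from the monomials attached to the widgets \eqref{eq_initial}. Thus $p$ intertwines the wall-crossing automorphisms, hence the bending rules, of $T_0\foD^1_{(\widetilde X,\widetilde D)}$ and $\foD^{\heart}_{(X,D)}$: applying $p$ to the final monomial of each broken line in $T_0\foD^1_{(\widetilde X,\widetilde D)}$ yields a final monomial of a broken line in $\foD^{\heart}_{(X,D)}$, and every broken line of $\foD^{\heart}_{(X,D)}$ lifts. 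The point that requires real work — the same phenomenon that makes the infinite wall-crossing products in \S\ref{Sec: examples} convergent — is that a priori $T_0\foD^1_{(\widetilde X,\widetilde D)}$ has infinitely many interacting walls, so $p$ could collapse infinitely many contributions onto one monomial and destroy convergence. This is controlled by the fact, established earlier in this section, that $\foD^{\heart}_{(X,D)}$ is a consistent wall structure over $Q_E(X,D)$, i.e. that broken lines are finitely graded by the image of their final monomial in $Q_E(X,D)$ modulo units (the $N_1(X_\Sigma)$-component): radiance of $\foD^1_{(\widetilde X,\widetilde D)}$ (Theorem~\ref{Thm rays}) and the monodromy formula of Proposition~\ref{Prop: monodormy} bound the number of incoming-wall crossings, hence the total $F_i$-degree, of a broken line in terms of its endpoint and its final $Q_E(X,D)$-class, so that $\vartheta_m\mapsto\vartheta_m^{\heart}$ is a well-defined convergent substitution. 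Since $p$ likewise intertwines the structure constants (the same broken-line counts with the $F_i$-bookkeeping erased), the induced map on theta rings is a ring isomorphism, which together with Step 1 proves the theorem.
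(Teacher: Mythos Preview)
Your Step~1 contains a genuine gap that is precisely the non-trivial content of the theorem. You assert that ``broken lines, hence theta functions \dots\ are preserved under piecewise linear isomorphisms of wall structures compatible with the MVPL data'' and then apply this to $\mu$. But $\mu\colon (M_\RR,\Sigma,\varphi_0)\to (B,\P,\varphi)$ is \emph{not} compatible with the MVPL data in the required sense: the kinks of $\varphi_0$ are the toric curve classes $[D_\rho^{\mathrm{toric}}]\in N_1(X_\Sigma)$, whereas the kinks of $\varphi$ on $B$ are the strict-transform classes $[D_\rho]\in N_1(X)$, and these differ by combinations of the $E_i^j$. Moreover the integral affine structures differ --- $M_\RR$ is smooth while $B$ is singular at the origin --- so $\mu$ is only piecewise linear, not affine. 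Thus the transport of broken lines across the codimension-one cells is not automatic: the change in kink, the change in affine structure, and the change in wall function under $\mu$ all interact, and showing that they conspire to match up broken-line monomials is exactly what needs to be proved, not assumed. The equivalence \eqref{eq_mu} is a statement about wall-crossing automorphisms, not about broken lines.

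The paper's proof avoids this issue entirely by a different mechanism. Rather than comparing two different tropical spaces via $\mu$, it works inside the single affine manifold $\widetilde B_1$ with the consistent wall structure $\foD^{1,F_i=0}_{(\widetilde X,\widetilde D)}$, whose localization at $0$ is the heart and whose asymptotic part is $\foD_{(X,D)}$. For a general point $p$, consistency guarantees that $\vartheta_{u_j}(sp)$ for varying $s\in\RR_{>0}$ are all related by parallel transport. The argument then studies how the \emph{broken lines} contributing to $\vartheta_{u_j}(sp)$ deform as $s$ varies: by radiance (Theorem~\ref{Thm rays}) they simply rescale away from the discriminant locus, and an explicit local computation using Proposition~\ref{Prop: monodormy} and \eqref{Eq: relating fs} shows that when a broken line meets the discriminant locus (at radius $1$), the parallel transport $\wp$ gives a one-to-one correspondence between the possible continuations on either side. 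Hence for $s\ll 1$ all bends occur inside radius $1$ (so only the heart is seen), and for $s\gg 1$ all bends occur outside (so only $\foD_{(X,D)}$ is seen). This scaling/deformation argument is the substance you are missing; your invocation of radiance and monodromy in Step~2 to control convergence is in the right spirit but is deployed at the wrong place and does not substitute for it.
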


\begin{proof}
Let $\foD^{1,F_i=0}_{(\widetilde{X},\widetilde{D})}$ be the wall structure obtained from $\foD^1_{(\widetilde{X},\widetilde{D})}$ in \eqref{Eq T0 intro} by setting all fiber classes $F_i$ as in Definition \ref{def: heart} to zero. Thus, the localization of $\foD^{1,F_i=0}_{(\widetilde{X},\widetilde{D})}$ around the origin is the heart of the canonical wall structure $\foD^{\heart}_{(X,D)}$. Moreover, it follows from \cite[Proposition $3.13$]{AG}, that asymptotically $\foD^{1,F_i=0}_{(\widetilde{X},\widetilde{D})}$ is still equivalent to the canonical wall structure associate to $(X,D)$.

Let $\mathcal{R}_{X^{\vee}}$ denote the coordinate ring for the mirror to $(X,D)$.
Let $u_j$ for $1\leq j \leq \ell$ be the primitive integral directions of the rays of the polyhedral direction $\P$ of $B$. Then, the corresponding theta functions $\theta_{u_j}$ generate $\mathcal{R}_{X^{\vee}}$ as an algebra.
Indeed, the mirror of $(X,D)$ is constructed as a smoothing of a union of affine toric varieties corresponding to the cones of the tropical space associated to $(X,D)$ \cite{GHK,GSCanScat}, and the coordinate ring for the mirror $\mathcal{R}_{X^{\vee}}$ restricted to each such affine piece is generated the monomials corresponding to the primitive integral directions $u_j$ of the rays of the corresponding cone in $(B,\P)$.  

Let $p$ be a general point of $\widetilde{B}_1$, so that the ray $\RR_{\geq 0} p$ does not intersect the discriminant locus in $\widetilde{B}_1$. For every $s \in \RR_{>0}$, we denote by $\theta_{u_j}(s p)$ the theta function $\theta_{u_j}$ computed at the $s$-rescaled point $s p$ by the scattering diagram $\foD^{1,F_i=0}_{(\widetilde{X},\widetilde{D})}$. Note that for every $s, s' \in \RR_{>0}$, $\theta_{u_j}(s p)$ and $\theta_{u_j}(s' p)$ are related by parallel transport from $s$ to $s'$ by consistency of the scattering diagram $\foD^{1,F_i=0}_{(\widetilde{X},\widetilde{D})}$. 
Our goal is to show that for $0<s<<1$, the theta functions $\theta_{u_j}(s p)$ coincide with the theta functions $\theta_{u_j}$ computed by the heart of the canonical scattering diagram $\foD_{(X,D)}^{\heart}$, and that for $s>>1$, the theta functions $\theta_{u_j}(s p)$ coincide with the theta functions $\theta_{u_j}$
computed by the canonical scattering diagram $\foD_{(X,D)}$. This will imply Theorem \ref{thm: heart}.

It is enough to show that for $0<s<<1$, all broken lines contributing to $\theta_{u_j}(sp)$ only intersect walls of $\foD_{(X,D)}^{\heart}$, and that for $s>>1$, all broken lines contributing to $\theta_{u_j}(sp)$ only intersect walls of the asymptotic scattering diagram of $\foD^{1,F_i=0}_{(\widetilde{X},\widetilde{D})}$.
For that, it is enough to study how the broken lines contributing to $\theta_{u_j}(s p)$ change as a function of $s$. As long as broken lines do not pass through the discriminant locus, it follows from the radiant property of $\foD^{1,F_i=0}_{(\widetilde{X},\widetilde{D})}$ reviewed in Theorem \ref{Thm rays} that the broken lines move continuously as a function of $s$ by $s$-rescaling of the intersection points with the walls. In particular, one obtain a one-to-one correspondence between broken lines at different values of $s$ as long as no broken line passes through the discriminant locus.

It remains to study how the broken lines change when passing through the discriminant locus. This can be done by an explicit local analysis as follows. Assume that there exists a broken line $\beta$ crossing a wall in $\rho_\infty$ for some value of $s$, going to the discriminant locus for $s$ approaching a critical value $s_{\mathrm{crit}}$. Let $a_\beta z^{m_\beta}$ be the monomial attached to the linearity domain of $\beta$ just before crossing the wall. Let $\beta_{\mathrm{in}}$ be the part of the broken line $\beta$ consisting of the linearity domains before crossing the walls.
Then, using the notation of Theorem \ref{Thm rays} and Figure \ref{Fig:note}, the possible ways to continue $\beta_{\mathrm{in}}$ in a broken line after crossing the wall in $\rho_\infty$ are in one-to-one correspondence with the monomials
in 
\begin{equation}
    \label{Eq gamma}
     a_\beta z^{m_\beta} f_{y'}^{\langle m_\beta,n \rangle}   t^{\kappa_\infty \langle m_\beta,n \rangle}\,,
\end{equation}
where $f_{y'}$ denotes the wall crossing function attached to $\rho_\infty$, and by $\kappa_0$ we denote the kink of the MVPL function $\varphi_i$ defined in \eqref{Eq: varphi-i} along $\rho_{\infty}$.
Similarly, for $s<s_{\mathrm{in}}$, the possible ways to continue the $s$-rescaling of $\beta_{\mathrm{in}}$ in a broken line after crossing the wall in $\rho_0$ are in one-to-one correspondence 
with the monomials in
\begin{equation}
    \label{Eq gamma'}
 a_\beta z^{m_\beta} f_{y}^{\langle m,n \rangle}   t^{\kappa_0 \langle m_\beta,n \rangle} 
\end{equation}
where $f_{y}$ denotes the wall crossing function attached to $\rho_0$, and by $\kappa_0$ we denote the kink of the MVPL function $\varphi_i$ defined in \eqref{Eq: varphi-i} along $\rho_0$.

 It follows from \cite[Lemma~4.20]{AG} (see the final equation in the proof of \cite[Lemma~4.20]{AG} by inserting $F_i = 0$ for the fiber classes), that the functions $f_y$ and $f_{y'}$ are related by the equation
\begin{equation}
\label{Eq: relating fs}
    f_{y}=t^{-\sum_j E^j_{\ul{\rho}}}z^{\kappa^i_{\ul{\rho}} m_i}\wp(f_{y'}),
\end{equation}
where $E^j_{\ul{\rho}}$, for
$1\le j\le \kappa^i_{\ul{\rho}}$ denotes the classes
of the exceptional curves of the blow-up $D_{\rho_i}$ along $H_{ij}$, and $\wp$ is the parallel transport map defined in \eqref{eq:parallel transport}. By substituting the formula for $f_y$ given in \eqref{Eq: relating fs} to \eqref{Eq gamma'}, one can rewrite \eqref{Eq gamma'} as
\begin{equation}
    \label{Eq gamma' 2}
      a_\beta z^{m_\beta + \kappa^i_{\ul{\rho}} m_i \langle m_\beta,n \rangle} 
   t^{\kappa_0 \langle m_\beta,n \rangle -\sum_jE^j_{\ul{\rho}}\langle m_\beta,n \rangle} \wp(f_{y'})
\end{equation}
Note that kinks $\kappa_0$ and $\kappa_\infty$ are related by the formula
\begin{equation}
    \label{eq kinks change}
    \kappa_{\infty} - \kappa_0 = -\sum_{j=1}^{\kappa^i_{\ul{\rho}}} E^j_{\ul{\rho}}.
\end{equation}
by \cite[Eqn~$3.37$]{AG}. Thus, $\kappa_0 \langle m,n \rangle -\sum_jE^j_{\ul{\rho}}\langle m,n \rangle = \wp(\kappa_{\infty})$. Hence, it follows that \eqref{Eq gamma'} is obtained by applying the parallel transport $\wp$ to 
\eqref{Eq gamma}. More precisely, the parallel transport $\wp$ induces a one-to-one correspondence between the monomials in \eqref{Eq gamma} and the monomials in  \eqref{Eq gamma'}, and so we have a well-defined continuous way to deform the broken lines across the discriminant locus from $s > s_{\mathrm{crit}}$ to $s <s_{\mathrm{crit}}$, see Figure \ref{Fig: thetaschange} for an example.

Therefore, rescaling by $s\in \R_{>0}$ the intersection points with the walls, along with the above local parallel transport around the discriminant locus,  
is a well-defined way to continuously deform the broken lines contributing to $\theta_{u_j}(sp)$ as a function of $s$. As there are finitely many $1\leq j\leq l$, and finitely many broken lines contributing to 
$\theta_{u_j}(p)$, with finitely many bendings (recall that we work modulo the ideal $I$), we deduce that for $s>0$ small enough, the distance to the origin of all the intersection points of all broken lines contributing to $\theta_{u_j}(sp)$ are all strictly smaller than one for all $1\leq j\leq l$. Hence, these broken lines only intersect walls of $\foD_{(X,D)}^{\heart}$. 
Similarly, for all $s>0$ large enough, the distance to the origin of all the intersection points of all broken lines contributing to $\theta_{u_j}(sp)$ are all strictly bigger than one for all $1\leq j\leq l$.
Hence, these broken lines only intersect walls of the asymptotic scattering diagram of $\foD^{1,F_i=0}_{(\widetilde{X},\widetilde{D})}$.
\end{proof}

\begin{remark}
Note that a particular consequence of \ref{thm: heart} is that though the heart of the canonical wall structure $\foD^{\heart}(X,D)$ associated to a log Calabi--Yau pair is defined over the localization $Q_E(X,D)$ of the relevant monoid $Q(X,D)$ at exceptional curve classes, as in \eqref{eq: heart monoid}, the mirror to $(X,D)$ is nonetheless obtained as a family over $Q(X,D)$. This is indeed natural, since in the wall structure $\foD^{\heart}(X,D)$ the only occurrence of the negative powers of exceptional curve classes are on the finitely many incoming walls -- this is a particular corollary of the main result of \cite{AG} that asymptotically the wall structure $\widetilde{\foD}^1_{(\widetilde X, \widetilde D)}$ is equivalent to the canonical wall structure, in which the coefficients of the wall functions correspond to honest effective curve classes. The occurrence of negative powers of the exceptional curve classes in the finitely many incoming walls does not change the fact that the resulting ring of theta functions obtained by broken lines in $\foD^{\heart}(X,D)$ defines a family over $Q(X,D)$.
\end{remark}

\begin{example}
\label{Ex: mirrors are same}
Let $X$ be the blow-up of a non-toric point in $\PP^2$ as in Example \ref{Ex0}, for which the the height one slice of the canonical wall structure associated to the degeneration $(\widetilde{X},\widetilde{D})$ is illustrated in Figure \ref{Fig:3}. The broken lines defining theta functions with end point at a general point $p_0$ on $\foD^{\heart}_{(X,D)}$ are given by
\begin{equation}
    \nonumber
   \vartheta_{(1,0)} = x, \,\ \,\  \vartheta_{(0,1)} = y, \,\ \,\ \mathrm{and} \,\ \,\ \vartheta_{(-1,-1)} = x^{-1}y^{-1}(1+xz^{-[E]})z^{[L]} \,.
\end{equation}
On the other hand, the broken lines defining theta functions with end point at a general point $p'$ on $\foD^{\mathrm{as}}_{(\widetilde X, \widetilde D)} \cong  \foD_{(X,D)}$ are given by
\begin{equation}
\nonumber
   \vartheta'_{(1,0)} = x, \,\ \,\  \vartheta'_{(0,1)} = xy, \,\ \,\ \mathrm{and} \,\ \,\ \vartheta'_{(-1,-1)} = x^{-1}y^{-1}(1+x^{-1}z^{[E]})z^{[L-E]} \,.
\end{equation}
Observe that in this example the theta functions are related by a parallel transport map defined in \eqref{eq:parallel transport}, which is along a path on the upper half plane mapping $p_0$ to $p'$ by
\begin{align}
    \nonumber
       \wp:\mathbf{k}[\Lambda][Q(X,D)] & \longrightarrow 
\mathbf{k}[\Lambda][Q(X,D)] \\
  \nonumber
   t^qz^{(1,0)} & \longmapsto t^qz^{(1,0)} \\
   \nonumber
    t^qz^{(0,1)} & \longmapsto t^qz^{(1,1)} 
\end{align}
The mirror family to $(X,D)$ in this case, is given by
\begin{align}
   \nonumber
\mathrm{Spec}\mathbf{k}[Q(X,D)][ \vartheta_{(1,0)} , \vartheta_{(0,1)}  , \vartheta_{(-1,-1)} / \big( \vartheta_{(1,0)}  \vartheta_{(0,1)}  \vartheta_{(-1,-1)}  = z^{[L]} + \vartheta_{(1,0)} z^{[L-E]} \big) \,,
\nonumber
\end{align}
or equivalently, by
\begin{align}
   \nonumber
\mathrm{Spec} \mathbf{k}[Q(X,D)][ \vartheta'_{(1,0)} , \vartheta'_{(0,1)}  , \vartheta'_{(-1,-1)} ] / \big( \vartheta'_{(1,0)}  \vartheta'_{(0,1)}  \vartheta'_{(-1,-1)}  = z^{[L]} + \vartheta'_{(1,0)} z^{[L-E]} \big) \,.
\end{align}


\begin{figure}[h!]
\begin{center}
\resizebox{.9\linewidth}{!}{\input{thetaschange.pspdftex}}
\caption{The broken lines defining theta functions on $\foD^{\heart}_{(X,D)}$ on the left and on $\foD_{(X,D)}$ on the right}
\label{Fig: thetaschange}
\end{center}
\end{figure}

\end{example}

\section{Explicit equations for mirrors to log Calabi--Yau pairs in dimension three}
\label{Sec: examples}

In this section we first illustrate how
to obtain the concrete equation of the mirror, in the simple situation when we blow-up a single hypersurface in a three dimensional log Calabi--Yau pair. We then study the situation when several hypersurfaces are blown-up.

\begin{example}
Let $\Sigma$ be the toric fan of $X_{\Sigma}=\PP^3$, with rays generated by $e_1,e_2,e_3$ and $e_1-e_2-e_3$. Consider the blow-up of $X_{\Sigma}$ with center a general degree $d$ hypersurface $H\subset D_1$ contained in a component $D_1$ in the toric boundary corresponding to the ray generated by $e_1$. The initial walls of the heart of the associated wall structure, $\foD^{\heart}_{(\PP^3,H)}$ are displayed in Table \ref{Table: walls of P3 with one line}. 
\begin{table}[]
    \centering
    \begin{tabular}{ll} \hline
 $\fod$ & $f_{\fod}$ \\ \hline
  $\langle e_1,e_2 \rangle,\langle e_1,e_3 \rangle,\langle e_1,-e_1-e_2-e_3 \rangle$ & ~~ $(1+t^{-E}x)^d$  \\ 
 $\langle -e_1,e_2 \rangle,\langle -e_1,e_3 \rangle,\langle -e_1,-e_1-e_2-e_3 \rangle$ & ~~$(1+t^{L-E}x)^d$ \\ 
\hline
\vspace{0.0001 cm}
  \end{tabular}
    \caption{Initial walls of $\foD^{\small\heart}_{(\PP^3,H)} $, where $L$ denotes class of the strict transform of a general line in $\PP^3$ and $E$ denotes the class of a fiber of the exceptional divisor. By $\langle e_i, e_j \rangle$ we denote the cone spanned by $e_i$ and $e_j$.}
    \label{Table: walls of P3 with one line}
\end{table}
To obtain a consistent wall structure we extend each of the initial walls as illustrated in Figure \ref{Fig: P3 one line}. Let $\varphi$ be the PL function as in \S\ref{subsec:varphi}, which vanishes on the positive
octant and whose kinks on each of the two dimensional cells of $\Sigma$ are the class $[L]$ of the strict transform of a general line in $\PP^3$. We fix a general point $p$ in the positive octant.
The theta functions with endpoint $p$, and asymptotic directions given by the asymptotic directions of the rays of $\Sigma$, are given by
\begin{eqnarray}
\label{Eq:mirror one line}
     \vartheta_{e_1} & = & z^{(1,0,0)} = x, \\
     \nonumber
    \vartheta_{e_2} & = & z^{(0,1,0)} = y,   \\
    \nonumber
    \vartheta_{e_3} & = & z^{(0,0,1)} = z, \\
    \nonumber
    \vartheta_{e_4} & = & z^{(-1,-1,-1)} \big( 1+ t^{-[E]}x  \big)^d t^{[L]} =\frac{1}{\vartheta_1 \vartheta_2 \vartheta_3}\big( 1+ t^{-[E] }\vartheta_1 \big)^d t^{[L]} \,,
\end{eqnarray}
where the factor $t^{[L]}$ is the contribution from the kink of $\varphi$.
It follows from \eqref{Eq:mirror one line} that the mirror to $(X,D)$ is given by
\begin{equation}
    \label{Eq mirror for one H}
\mathrm{Spec}\mathbf{k}[Q(X,D)][ \vartheta_{e_1},\vartheta_{e_2},\vartheta_{e_3},\vartheta_{e_4}]/\big( \vartheta_{e_1}\vartheta_{e_2}\vartheta_{e_3}\vartheta_{e_4}= ( 1+ t^{-[E] }\vartheta_{e_1} )^d   t^{[L]}  \big),
\end{equation}
where $[L]$ is the class of a general line and $[E]$ is the class of an exceptional fiber over $H$, and $Q(X,D)$ is the relevant monoid associated to $(X,D)$ defined as in \eqref{eq: monoid for XD}.

\begin{figure}
\resizebox{.7\linewidth}{!}{\input{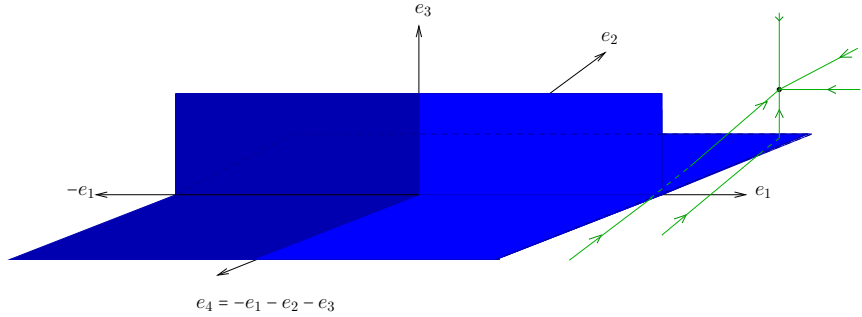}}
\caption{The initial walls of $\foD^{\heart}_{\PP^3,H}$ formed by the widget corresponding to a hypersurface of degree $d$ in the toric boundary in light blue. The consistent wall structure $\foD^{\heart}_{\PP^3,H}$ is obtained by inserting the three walls in dark blue. The broken lines defining the theta functions with endpoint in the positive octant are illustrated in green.}
\label{Fig: P3 one line}
\end{figure}
\end{example}

We next consider the situation when we blow-up a disjoint union of two hypersurfaces contained in toric boundary components of $\PP^3$. In this case, the walls formed by widgets of the tropicalizations of these hypersurfaces interact. This creates a pretty sophisticated wall structure, even in the simplest case when the center of blow up is a disjoint union of two lines, and requires to do first the combinatorial construction of the toric wall structure $\foD_{(X_{\Sigma},H)}$ for purposes of book keeping, and then passing to the heart of the canonical wall structure. Before proceeding with the more general situation, we first analyse in detail the case with two lines, in which we a priori obtain infinitely many walls in the heart of the canonical wall structure. 

\begin{example}
\label{Ex: two lines}
Let $X$ be the blow-up of $\mathbb{P}^3$ with center two disjoint lines $\ell_1,\ell_2$ contained in two different components $D_1,D_2$ in the toric boundary divisor $D_{\Sigma} \subset \PP^3$, and $D$ be the strict transform of $D_\Sigma$. 
The set of ray generators of the toric fan $\Sigma$ of $\PP^3$ is given by $\{ e_1,e_2,e_3,e_4=-e_1-e_2-e_3   \}$, where $\{ e_i ~ | ~ 1 \leq i \leq 3 \}$ is the standard basis in $\RR^3$. We further set \[z^{e_1}=x, \, \, z^{e_2}=y, \, \, z^{e_3}=z, \, \, \mathrm{and} \,\, z^{-e_1-e_2-e_3}=1/xyz.\] 
The walls of the initial wall structure $\foD_{(\PP^3,\ell_1 \cup \ell_2),\mathrm{in}}$ are formed by the two widgets, 
illustrated in Figure \ref{Fig: two widgets}. 
\begin{figure}
\resizebox{0.8\textwidth}{!}{
\input{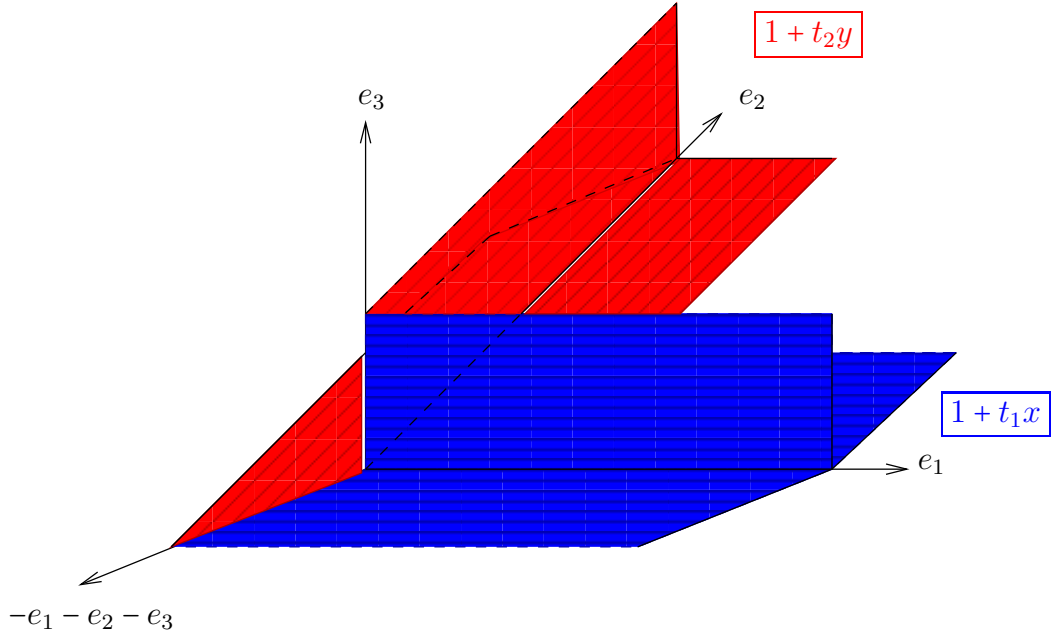}
}
\caption{The walls of $\foD_{(\PP^3,\ell_1 \cup \ell_2),\mathrm{in}}$ formed by two widgets obtained by deformations of the two tropical lines corresponding to $\ell_1$ and $\ell_2$.}
\label{Fig: two widgets}
\end{figure}
We list the set of walls of  $\foD_{(\PP^3,\ell_1 \cup \ell_2),\mathrm{in}}$ in Table \ref{Table: initial walls of P3 with two lines}.
\begin{table}[]
    \centering
    \begin{tabular}{ll} \hline
 $\fod$ & $f_{\fod}$ \\ \hline
  $\langle e_1,e_2 \rangle,\langle e_1,e_3 \rangle,\langle e_1,-e_1-e_2-e_3 \rangle$ & ~~ $1+t_1x$  \\ 
 $\langle e_2,e_1 \rangle,\langle e_2,e_3 \rangle,\langle e_2,-e_1-e_2-e_3 \rangle$ & ~~ $1+t_2y$  \\ 
\hline
\vspace{0.0001 cm}
  \end{tabular}
    \caption{Walls of $\foD_{(\PP^3,\ell),\mathrm{in}} $ formed by the two widgets in Figure \ref{Fig: two widgets}.}
    \label{Table: initial walls of P3 with two lines}
\end{table}
The set of ray generators for the initial joints in $\foD_{(\PP^3,\ell_1 \cup \ell_2),\mathrm{in}} $ is then given by
\[ \{ (-1, -1, -1) ,  
     (0, 0, 1) , 
   (1, 0, 0) , 
   (0, 1, 0)   \} \]
We first need to check for consistency up to order $1$ around all these initial joints, and then repeat it consecutively for higher orders, analysing also the new joints formed at each step. We describe how to do this in detail below.
\begin{center}
\textbf{Order $1$ Corrections}
\end{center}
Let us denote $\foD_1:=\foD_{(\PP^3,D_{\Sigma}),in}$.
First, we check consistency in $\foD_1$ around the joint generated by $ (1, 0, 0) $: The projections of the walls of $\foD_1$ adjacent to $\langle 1,0,0 \rangle$, along $\langle 1,0,0 \rangle$, are illustrated in Figure \ref{First order correction around x}. Note that to remember the normal directions of the walls adjacent to a joint, we label them on each of the rays obtained after projecting them along the joint.
\begin{figure}
\resizebox{0.8\textwidth}{!}{
\input{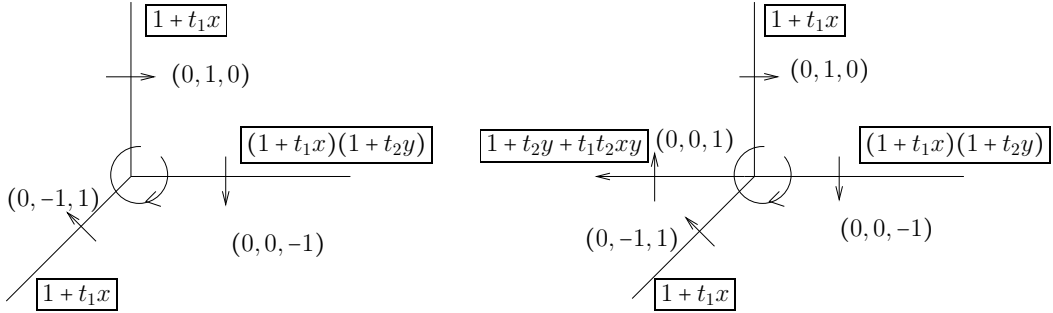}
}
\caption{On the left is the projection of the walls of $\foD_1:=\foD_{(\PP^3,D_{\Sigma}),in}$ adjacent to the joint $\langle (1,0,0) \rangle$, along $\langle (1,0,0) \rangle$. On the right is the projection of the walls of $\foD_{(\PP^3,D_{\Sigma})}$, adjacent to $\langle (1,0,0) \rangle$. We write the attached function to each wall inside the nearby box.}
\label{First order correction around x}
\end{figure}
By the formula \eqref{Eqn: theta_fop}, the wall crossing functions attached to the walls of $\foD_1$, transfer the monomials $x,y$ and $z$ as follows: $x$ remains invariant since in \eqref{Eqn: theta_fop} the power of the wall crossing function vanishes. For $y$, consecutively applying the wall crossing transformations, going counterclockwise around the joint with a loop illustrated as in Figure \ref{First order correction around x}, we obtain
\begin{align*} 
y   \mapsto y 
 \mapsto y(1+t_1x)^{-1}
  \mapsto y(1+t_1x)^{-1}(1+t_1x) = y
\end{align*} 
Hence, $y$ remains invariant as well. However for $z$, at order $1$ (i.e. up to higher order terms of degree at least $2$) we obtain, 
\begin{align*} 
z  \mapsto & z(1+t_1x)^{-1}(1+t_2y)^{-1} \\
  \mapsto  &  z(1+t_1x)(1+t_1x)^{-1}(1+t_2y(1+t_1x)^{-1})^{-1}= z(1+t_2y(1+t_1x)^{-1})^{-1} \\
  \mapsto & z(1+t_2y(1+t_1x)(1+t_1x)^{-1})^{-1}  = z(1-t_2y)
\end{align*} 
Hence at first order, $z$ is not invariant. To correct the discrepancy for $z$ to be invariant at first order, following the recipe explained in \cite[Theorem~5.6]{AG}, we set
\[\foD_2 := \foD_1 \cup (\langle (1,0,0),(0,-1,0) \rangle ,1+t_2y)\]
to be the wall structure obtained from $\foD_1$ by inserting the wall $ (\langle (1,0,0),(0,-1,0) \rangle ,1+t_2y) $. 
Next we check consistency around the joint generated by $ (0, 1, 0) $: The walls of $\foD_2$ which are adjacent to $ \langle (0, 1, 0) \rangle$ are illustrated after projecting along $ \langle (0, 1, 0) \rangle$ in Figure \ref{First order correction around y}.
\begin{figure}
\resizebox{0.8\textwidth}{!}{
\input{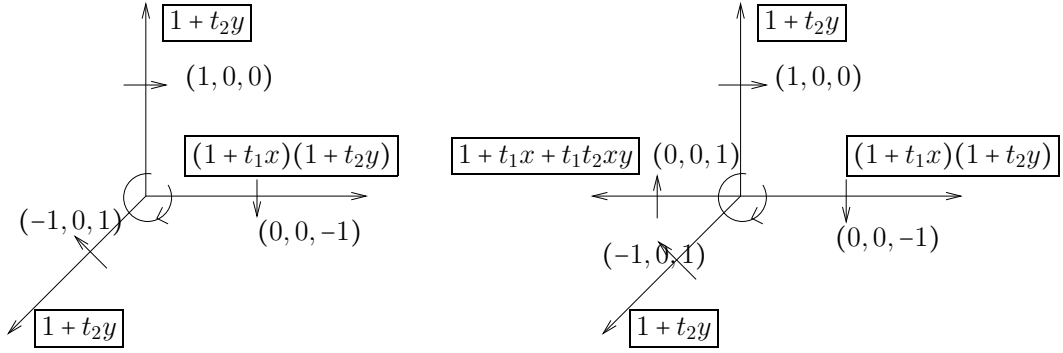}
}
\caption{On the left is the projection of the walls of $\foD_1:=\foD_{(\PP^3,D_{\Sigma}),in}$ adjacent to $\langle (0,1,0) \rangle$, along $\langle (0,1,0) \rangle$. On the right is the projection of the walls of  $\foD_{(\PP^3,D_{\Sigma})}$, adjacent to $\langle (0,1,0) \rangle$. We write the attached function to each wall inside the nearby box.}
\label{First order correction around y}
\end{figure}
Applying the wall-crossing automorphisms to $x,y,z$, at degree $1$, we obtain
\[ x\mapsto x, ~ y \mapsto y, ~ z \mapsto z(1-t_1x)  \]
Hence, to do the first order correction, we set
\[\foD_3 := \foD_2 \cup (\langle (0,1,0),(-1,0,0) \rangle ,1+t_1x)\]
Next checking consistency around the joint generated by $ (0, 0, 1) $
and proceeding analogously, we define
\[\foD_4 := \foD_3 \cup (\langle (0,0,1),(-1,0,0) \rangle ,1+t_1x) + (\langle (0,0,1),(0,-1,0) \rangle ,1+t_2y)\]
For consistency around the joint generated by $ (-1, -1, -1) $, applying the wall-crossing automorphisms to the monomials $x,y,z$, we obtain 
\[ x\mapsto x(1+t_2y), ~   y\mapsto y(1-t_1x), z \mapsto z(1+t_1x-t_2y)\]
Proceeding analogously, we set
\[ \foD_5 := \foD_4 \cup ( \langle (-1,-1,-1),(-1,0,0) \rangle ,1+t_1x) + (\langle (-1,-1,-1),(0,-1,0) \rangle , 1+t_2y\
)\]
The set of all joints of $\foD_5$ is given by
\[ \{ (0, -1,  0) ,
    (-1,  0,  0) ,
       (-1, -1, -1) ,
     (0, 0, 1) ,
       (1, 0, 0) ,
      (0, 1, 0)  \}. \]
Now, it is easy to verify that $\foD_5$ is consistent to order $1$ around all these joints. So, we can continue with consistency at order $2$.\\
\begin{center}
\textbf{Order $2$ Corrections}
\end{center}
Consistency around the joint generated by $   (-1, -1, -1) $: $\foD_5$ is not consistent to order $2$ around this joint. Indeed, the wall crossing functions transform $x,y,z$ by
\[x \mapsto x(1+t_1t_2xy), y \mapsto y(1-t_1t_2xy), z \mapsto z \]
To correct this, we define
\[ \foD_6 := \foD_5 \cup (\langle (1,0,0),(-1,-1,0) \rangle ,1+t_1t_2xy).\]
Consistency around the joint generated by $   (0, 1, 0) $: $\foD_6$ is not consistent to order $2$. The wall crossing functions transform $x,y,z$ by
\[ x \mapsto  x, y \mapsto y,  z\mapsto z(1-t_1t_2xy)   \]
To correct this, we define
\[\foD_7 := \foD_6 \cup (\langle (0,1,0),(-1,-1,0) \rangle ,1+t_1t_2xy)\]
Consistency around the joint generated by $   (0, 0, 1) $: $\foD_7$ is not consistent to order $2$.  The wall crossing functions transform $x,y,z$ by
\[ x \mapsto x(1+t_1t_2xy), y \mapsto y(1-t_1t_2xy), z \mapsto  z \]
So, we define 
\[ \foD_8 := \foD_7 \cup (\langle (0,0,1),(-1,-1,0) \rangle,1+t_1t_2xy).\]
Consistency around the joint generated by $   (-1, -1, -1) $: $\foD_8$ is not consistent to order $2$. The wall crossing functions transform $x,y,z$ by
\[ x \mapsto x(1+t_1t_2xy), y \mapsto y(1-t_1t_2xy),
 z \mapsto    z \]
To correct this, we define
\[ \foD_9 := \foD_8 \cup (\langle (-1,-1,-1),(-1,-1,0) \rangle,1+t_1t_2xy).\]
Now, we are done with order $2$. Note that the set of joints of $\foD_9$ is given by
\[ \{
     (0, -1,  0) ,
      (-1,  0,  0)  ,
      (-1, -1,  0)  ,
        (-1, -1, -1)  ,
        (0, 0, 1)  ,
     (1, 0, 0)  ,
       (0, 1, 0)  
\} \]
and $\foD_9$ is consistent around all these joints to order $2$. Moreover, it is easy to verify that in $\foD_9$  around the joints $(0,0,1)$ and $(-1,-1,-1)$ we already have consistency to all orders, hence no new wall which are adjacent to either of $z$ or $1/xyz$ will be inserted at the next steps. Although the process of inserting new walls will never terminate in this example, all the remaining walls will have support on the plane spanned by $e_1$ and $e_2$. Using magma computer algebra, and continuing to do higher order corrections around the other joints consecutively we deduce that achieving consistency around the joint $ \langle (1, 0, 0) \rangle $ requires the insertion of infinitely many new walls to $\foD_9$, given by
\[ (\langle (1,0,0),(-1,-1,0) \rangle ,1+t_1t_2xy) \cup \bigcup_{\substack{(a,b)\in \ZZ^2\\b<a<0}} (\langle (1,0,0),(a,b,0) \rangle ,f_{(a,b,0)})\,.\]
To write the equations of mirror families, we do not need to provide closed formulas for $f_{(a,b,0)}$'s, as we will see in a moment. We nonetheless note that such a closed formula would provide one the data of of counts of $\AA^1$-curves -- as explained in \cite[\S~7]{AG}, such counts correspond to coefficients of $\log f_{(a,b,0)}$. It is a challenging task beyond the scope of this paper to write such closed formulas. 
Proceeding similarly, achieving consistency around the other joints of $\foD_9$,
requires the insertion of infinitely many new walls to $\foD_9$ with support on the plane spanned by $e_1$ and $e_2$. We compute the limits of all the products of the corresponding wall crossing functions and obtain the following:
\begin{proposition}
\label{x-y}
The walls of $\foD_{(\PP^3,\ell_1 \cup \ell_2)}$, up to equivalence, are displayed in
Table \ref{Table: walls of final HDTV P3}. 
\end{proposition}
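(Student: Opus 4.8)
By the higher-dimensional Kontsevich--Soibelman lemma \cite[Theorem~5.6]{AG} recalled above, there is a consistent wall structure extending the initial structure $\foD_{(\PP^3,\ell_1\cup\ell_2),\mathrm{in}}$, it is unique up to equivalence, and it is produced by the order-by-order correction procedure already begun in the construction of $\foD_1,\dots,\foD_9$. So the plan is to run this procedure to its limit, show that the limit is well defined, and match it with Table~\ref{Table: walls of final HDTV P3}; equivalently, one checks directly that the wall structure recorded in the Table is consistent, contains the initial walls, and differs from them only by non-incoming walls, whereupon the uniqueness clause finishes the proof.

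The first step is a \emph{localization} statement: in $\foD_9$ the joints $\RR_{\ge 0}e_3$ and $\RR_{\ge 0}(-e_1-e_2-e_3)$ are already consistent to all orders, so that no further wall meeting these two rays is ever inserted. I would prove this by inspecting the walls adjacent to those joints in $\foD_9$ -- they are precisely the two widgets together with the walls carrying $1+t_1t_2xy$ -- and observing that, modulo a ray parallel to the joint, the initial data around each of them is the rank-two ``focus--focus'' scattering of $1+t_1 z^{e_1}$ against $1+t_2 z^{e_2}$, whose unique consistent completion adds a single wall (carrying $1+t_1t_2xy$) already present in $\foD_9$; hence the path-ordered product around a small loop around either joint is already the identity. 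Granting this, every wall inserted from the third step of the algorithm onwards is a two-dimensional cone contained in the plane $P:=\RR e_1+\RR e_2$, with wall-crossing function $\equiv 1$ modulo $\fom_P$ and involving only the monomials $z^{ae_1+be_2}$ with $a,b\ge 0$. Consequently all remaining consistency checks take place around rays of $P$, and reduce to a family of interacting two-dimensional scattering problems -- one around $\RR_{\ge 0}e_1$, one around $\RR_{\ge 0}e_2$, and one around each ray $\RR_{\ge 0}(a,b,0)$ created along the way -- in which $1+t_1x$ and $1+t_2y$ occur as ``central'' wall functions attached to several rays at once; this is exactly what forces the infinitely many new walls.

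It remains to show that the procedure converges and to evaluate the limit. For convergence: every wall inserted at the $k$-th step of the algorithm carries a function $\equiv 1$ modulo $\fom_P^{\,k}$, so on each ray of $P$ the a priori infinite product of the attached wall-crossing functions converges $\fom_P$-adically in $\widehat{\mathbf{k}[P]}$, and the limiting wall structure -- being a limit of wall structures consistent to ever higher order -- is again consistent. For the evaluation: one iterates the correction algorithm around the joints of $P$, with the computer-algebra assistance used for this paper, and reads off the stabilized ray directions together with the closed forms of the limiting products, in particular of the functions $f_{(a,b,0)}$; these are the entries of Table~\ref{Table: walls of final HDTV P3}. (Alternatively one may identify these limits with the tropical-vertex functions of the Kronecker-type scattering of $1+t_1x$ and $1+t_2y$ via \cite{GPS}, which also attaches to them an enumerative meaning in terms of counts of $\AA^1$-curves, as in \cite[\S 7]{AG}.) The \textbf{main obstacle} is this last point: showing that the iteration stabilizes to the stated closed expressions even though infinitely many walls interact along each ray of $P$. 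A secondary difficulty is the localization lemma of the previous paragraph, namely checking that the inductive step of the algorithm never re-introduces a monomial in $z^{e_3}$ or $z^{-e_1-e_2-e_3}$, which is precisely what keeps the entire tail of the computation confined to the plane $P$.
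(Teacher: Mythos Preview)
Your sketch is correct and, in passing, even states the approach the paper actually takes: you write that ``equivalently, one checks directly that the wall structure recorded in the Table is consistent, contains the initial walls, and differs from them only by non-incoming walls, whereupon the uniqueness clause finishes the proof.'' That is precisely the paper's proof. The paper does not run the iteration to its limit, does not argue convergence, and does not attempt to evaluate the infinite products $\prod f_{(a,b,0)}$. Instead it simply posits the wall structure of Table~\ref{Table: walls of final HDTV P3} and verifies by an explicit path-ordered-product computation around each joint (the joint $\langle(1,0,0)\rangle$ is worked out in full, the others are declared analogous) that it is consistent to all orders; uniqueness then gives the result.

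This matters because the shortcut completely dissolves what you flag as the ``main obstacle.'' The infinitely many walls produced by the order-by-order algorithm live on the plane $P=\RR e_1+\RR e_2$, but up to equivalence one may replace any collection of walls with common support by a single wall carrying the product of the functions; the closed forms in Table~\ref{Table: walls of final HDTV P3} (e.g.\ $1+t_2y+t_1t_2xy$ on $\langle e_1,-e_2\rangle$) are guesses for those products, and the point is that a guess can be \emph{verified} without ever proving that the algorithm converges to it. Your localization paragraph (that $\foD_9$ is already consistent around $\langle e_3\rangle$ and $\langle -e_1-e_2-e_3\rangle$, and hence all later corrections are confined to $P$) is true and appears in the paper's narrative \emph{leading up to} the proposition, but is not part of the proof itself. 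So your proposal is sound, but you spend your effort on the harder of the two routes you name; the paper takes the easier one.
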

\begin{proof}
Since up to equivalence there is a unique consistent will structure, is 
suffices to check that the wall structure with the final walls listed in Table \ref{Table: walls of final HDTV P3} is consistent. For this, we check consistency around each of the joints. Tracing around the joint $\langle (1,0,0) \rangle $, with a loop illustrated on the right hand side of Figure \ref{First order correction around x}, we immediately obtain $x \mapsto x$, $y\mapsto y$ as computed above while doing the first order corrections on $\foD_1$. Moreover, now for the monomial $z$, we obtain 
\begin{align*} 
z  \mapsto & z(1+t_1x)^{-1}(1+t_2y)^{-1} \\
  \mapsto  & z(1+t_1x)(1+t_1x)^{-1}(1+t_2y(1+t_1x))^{-1} = z(1+t_2y(1+t_1x))^{-1} \\
  \mapsto & z(1+t_2y+t_1t_2xy)(1+t_2y(1+t_1x))^{-1} = z
  \mapsto z
\end{align*}
Hence, we get consistency to all orders around $\langle (1,0,0) \rangle $.
Consistency around  $\langle (0,1,0) \rangle $ follows analogously by replacing $x$ by $y$ in the above computation. The consistency around the other joints is an analogous straight forward computation.
\end{proof}
We illustrate the walls of $\foD_{(\PP^3,\ell_1\cup \ell_2)}$ in Figure \ref{Walls lines}.
\begin{figure}
\resizebox{0.7\textwidth}{!}{
\input{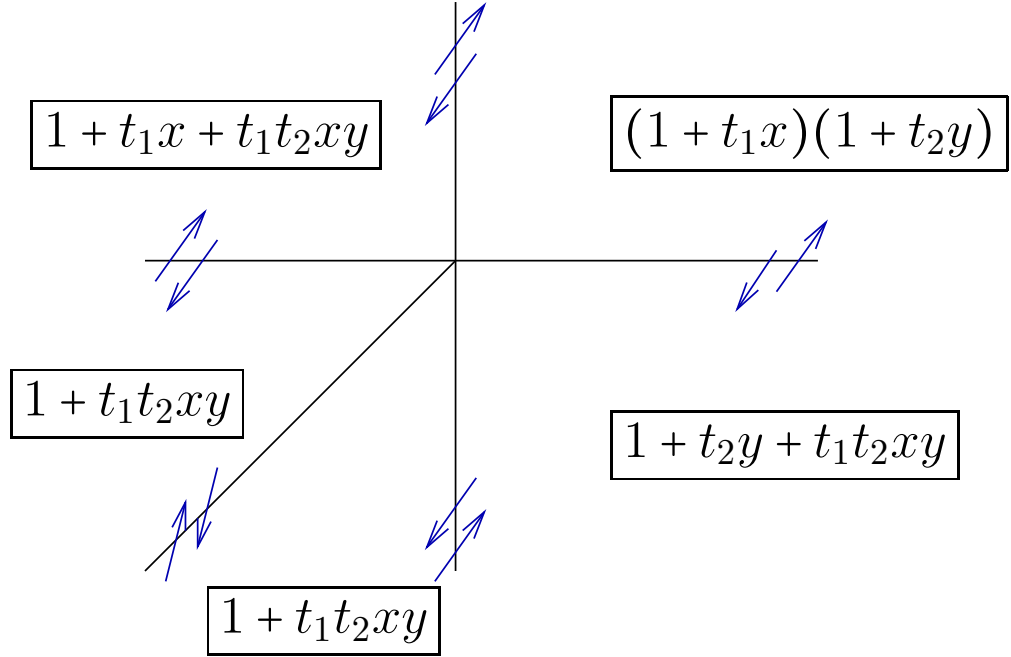}
}
\caption{Walls of the consistent wall structure $\foD_{(\PP^3,\ell_1\cup \ell_2)}$ which lie on the $\langle e_1,e_2 \rangle$ plane. Each upward pointing arrow on a joint indicates that there is a wall spanned by it and $\langle (1,0,0) \rangle$. Each downward pointing arrow on a joint indicates that there is a wall spanned by it and $\langle (-1,-1,-1) \rangle$. }
\label{Walls lines}
\end{figure}
We pass from the $t_i$-variables on the walls of $\foD_{(\PP^3,\ell_1 \cup \ell_2)}$ to the
curve classes variables, as explained in 
Section \ref{Sec: Degeneration}. Further, we insert all fiber classes $F=0$, as explained in \S\ref{Sec: the heart of the canonical wall structure}, to obtain the walls of the heart $\foD^{\small\heart}_{(X,D)}$ of the canonical wall structure, which are displayed in Table
\ref{Table: final walls canonical P3}.
\begin{table}[]
    \centering
    \begin{tabular}{ll} \hline
 $\fod$ & $f_{\fod}$ \\ \hline
  $\langle e_1,e_2 \rangle,\langle e_1,e_3 \rangle,\langle e_1,e_4 \rangle$ & ~~ $1+t_1x$  \\ 
$\langle e_2,e_1 \rangle,\langle e_2,e_3 \rangle,\langle e_2,e_4 \rangle$ & ~~$1+t_2y$ \\ 
  $\langle e_3,-e_1 \rangle,\langle e_4,-e_1 \rangle$ & ~~ $1+t_1x$  \\  
  $\langle e_3,-e_2 \rangle,\langle e_4,-e_2 \rangle$ & ~~ $1+t_2y$  \\ 
 $\langle -e_2,-e_1-e_2 \rangle,\langle -e_1,-e_1-e_2 \rangle,\langle e_3,-e_1-e_2 \rangle,\langle e_4,-e_1-e_2 \rangle$ & ~~$1+t_1t_2xy$ \\
 $\langle e_1,-e_2 \rangle$ & ~~ $1+t_2y+t_1t_2xy$  \\ 
  $\langle e_2,-e_1 \rangle$ & ~~ $1+t_1x+t_1t_2xy$  \\ 
\hline
  \end{tabular}
   \caption{Walls of $\foD_{(\PP^3,\ell_1\cup \ell_2)} $, where $e_4 = -e_1-e_2-e_3$. Here the first two rows correspond to initial walls.}
    \label{Table: walls of final HDTV P3}
\end{table}
\begin{table}[ht]
 \begin{tabular}{ll} \hline
 $\fod$ & $f_{\fod}$ \\ \hline
 $\langle e_1,e_2 \rangle,\langle e_1,e_3 \rangle, \langle e_1,e_4 \rangle$ & ~~ $1+t^{-E_1}x$  \\ 
$\langle e_2,e_1 \rangle,\langle e_2,e_3 \rangle, \langle e_2,e_4 \rangle$ & ~~$1+t^{-E_2}y$ \\
 $\langle e_3,-e_1 \rangle, \langle e_4,-e_1 \rangle$ & ~~ $1+t^{L-E_1}x$  \\
 $\langle e_3,-e_2 \rangle,\langle e_4,-e_2 \rangle,$ & ~~ $1+t^{L-E_2}y$  \\
  $\langle -e_1,-e_1-e_2 \rangle,\langle -e_2,-e_1-e_2 \rangle,  \langle e_3,-e_1-e_2 \rangle,  \langle e_4,-e_1-e_2 \rangle$ & ~~$1+t^{L-E_1-E_2}xy$ \\
 $\langle e_1,-e_2 \rangle$ & ~~ $1+t^{L-E_2}y+t^{L-E_1-E_2}xy$  \\ 
  $\langle e_2,-e_1 \rangle$ & ~~ $1+t^{L-E_1}x+t^{L-E_1-E_2}xy$  \\ 
\hline
  \end{tabular}
  \caption{Walls of $\foD^{\small\heart}_{(\mathrm{Bl}_{\ell_1\cup \ell_2}(\PP^3),D)}$}
    \label{Table: final walls canonical P3}
  \end{table}
 Observe that, by picking a general point $p$ in the positive octant spanned by $e_1,e_2$, and $e_3$ we ensure that the only broken lines that are not never-bending are the ones with asymptotic direction $-e_1-e_2-e_3$, which cross the wall $( \langle (1,0,0),(0,1,0) \rangle, (1+t^{[-E_1]}x)(1+t^{[-E_2]}y))$. The theta functions defined by these broken lines are given by
 \begin{equation}
  \label{Eq: theta for line}
     \vartheta_{e_1}  = x, ~~   ~~   \vartheta_{e_2}  = y, ~~  ~~ \vartheta_{e_3}  = z, ~~ ~~ \mathrm{and} ~~  ~~ \vartheta_{e_4}  =  x^{-1}y^{-1}z^{-1}(1+t^{[-E_1]}x)(1+t^{[-E_2]}y)t^{[L]}. 
 \end{equation}
In this case, the mirror to $(X,D)$ is given by
\[ \mathrm{Spec}\mathbf{k}[Q(X,D)][ \vartheta_{e_1},\vartheta_{e_2}, \vartheta_{e_3},\vartheta_{e_4}]/(\vartheta_{e_1}\vartheta_{e_2} \vartheta_{e_3}\vartheta_{e_4}=(1+t^{[-E_1]} \vartheta_{e_1})(1+t^{[-E_2]} \vartheta_{e_2})t^{[L]}), \]
where $-[E_1]$ denotes the class of an exceptional curve over $\ell_1$, and $-[E_2]$ is the class of an exceptional curve over $\ell_2$. 
 
  \end{example}

 \begin{remark}
 \label{Rem CC}
  Note that $X$ in Example \ref{Ex: two lines} 
  is a Fano variety with Mori-Mukai name MM $3-25$ 
  -- see \cite{coates2016quantum}. Moreover, the associated superpotential to $X$ given in \cite[Table~1]{coates2016quantum}, agrees with the sum of the theta functions we compute in \eqref{Eq: theta for line}, defining the tropical superpotential as in \cite{CPS}, which conjecturally agrees with the superpotential of \cite{coates2016quantum}.
So, we verify that the expectation that the mirror construction of \cite{GSCanScat} is compatible with the manifestations of Landau--Ginzburg mirror symmetry.
  \end{remark}

\begin{example}
\label{Ex: general case}
Let $X$ be the blow-up of $\mathbb{P}^3$ with center a disjoint union of a degree $d_1$ hypersurface $H_1$ and a degree $d_2$ hypersurface $H_2$ contained in two different components, say $D_1$ and $D_2$ respectively, in the toric boundary divisor $D_{\Sigma} \subset \PP^3$, and $D$ be the strict transform of $D_\Sigma$. We list the set of walls of  $\foD_{(\PP^3,H_1 \cup H_2),\mathrm{in}}$, along with the attached functions in this case in Table \ref{Table: initial walls of P3 with line and conic}.
\begin{table}[]
    \centering
    \begin{tabular}{ll} \hline
 $\fod$ & $f_{\fod}$ \\ \hline
  $\langle e_1,e_2 \rangle,\langle e_1,e_3 \rangle,\langle e_1,-e_1-e_2-e_3 \rangle$ & ~~ $(1+t_1x)^{d_1}$  \\ 
 $\langle e_2,e_1 \rangle,\langle e_2,e_3 \rangle,\langle e_2,-e_1-e_2-e_3 \rangle$ & ~~ $(1+t_2y)^{d_2}$  \\ 
\hline
\vspace{0.0001 cm}
  \end{tabular}
    \caption{Walls of $\foD_{(\PP^3,H_1 \cup H_2),\mathrm{in}} $}
    \label{Table: initial walls of P3 with line and conic}
\end{table}
Doing order by order consistency check around all joints, analogously as in \S\ref{Ex: two lines} we obtain infinitely many walls on the plane spanned by $e_1$ and $e_2$. By the aid of magma computer algebra \cite{Magma}, we deduce that the final consistent wall structure again is formed by these infinitely many walls supported on the $\langle e_1,e_2 \rangle$ plane, together with walls whose supports are on the cones $\langle j ,e_3 \rangle $ and $\langle j ,e_1-e_2-e_3 \rangle $, for any joint $j$ on the $e_1-e_2$ plane displayed in Figure \ref{Walls general}.
\begin{figure}
\resizebox{0.7\textwidth}{!}{
\input{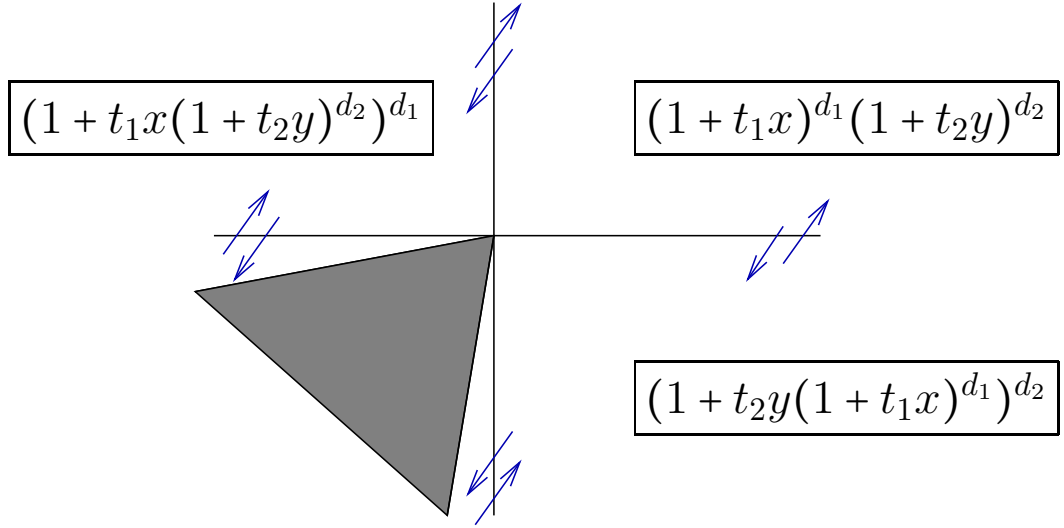}
}
\caption{Walls of the consistent wall structure $\foD_{(\PP^3,H_1\cup H_2)}$ which lie on the $\langle e_1,e_2 \rangle$ plane. Each upward pointing arrow on a joint indicates that there is a wall spanned by it and $\langle (1,0,0) \rangle$. Each downward pointing arrow on a joint indicates that there is a wall spanned by it and $\langle (-1,-1,-1) \rangle$. }
\label{Walls general}
\end{figure}
Again by picking a generic point $p$ in the positive octant spanned by $e_1,e_2,e_3$, similarly as in \S\ref{Ex: two lines}, we the obtain the theta functions defined by broken lines with endpoint $p$, given by 
\begin{equation}
    \label{Eq:line and conic}
     \vartheta_{e_1}  = x, ~~   ~~   \vartheta_{e_2}  = y, ~~  ~~ \vartheta_{e_3}  = z, ~~ ~~ \mathrm{and} ~~  ~~ \vartheta_{e_4}  = x^{-1}y^{-1}z^{-1}(1+t^{[-E_1]}x)^{d_1}(1+t^{[-E_2]}y)^{d_2}t^{[L]}.
\end{equation}
Therefore, the mirror to $(X,D)$ is given by
\begin{equation}
\label{Eq: mirror to general}
    \mathrm{Spec}\mathbf{k}[Q(X,D)][ \vartheta_{e_1},\vartheta_{e_2}, \vartheta_{e_3},\vartheta_{e_4}]/(\vartheta_{e_1}\vartheta_{e_2} \vartheta_{e_3}\vartheta_{e_4}=(1+t^{[-E_1]} \vartheta_{e_1})^{d_1}(1+t^{[-E_2]} \vartheta_{e_2})^{d_2}t^{[L]}), 
\end{equation}
where $Q(X,D)$ is the relevant monoid associated to $(X,D)$ defined as in \eqref{eq: monoid for XD}, $[L]$ is the class of a general line, $[E_1]$ is the class of a fiber over $H_1$ and  $[E_2]$ is the class of a fiber over $H_2$.
\end{example}

\section{Comparison with the work of Abouzaid--Auroux--Katzarkov}

In this section we first overview the mirror construction of Abouzaid--Auroux--Katzarkov for blow-ups of toric varieties along a smooth hypersurface \cite{AAK} using symplectic geometric techniques and then compare it with our construction \cite{AG} following the algebro-geometric framework of Gross--Siebert. The main result in this section shows that these two constructions agree.

Let $V$ be a smooth projective toric variety and $H\subset V$ a smooth hypersurface. Denote by $X$ the blow-up of $\PP^1 \times V$ at $\{ 0 \} \times H $, and let $D$ be the strict transform of the toric boundary divisor of $\PP^1 \times V$. The mirror to the log Calabi--Yau pair $(X,D)$, by which we mean the mirror to the open Calabi--Yau manifold $X^0=X \setminus D$, from the SYZ point of view \cite{SYZ} is constructed by Abouzaid--Auroux--Katzarkov using symplectic geometric techniques \cite{AAK}.\footnote{In \cite{AAK} one starts with $\CC \times V$ rather than $\PP^1 \times V$. However, the complement $X\setminus D$ in either case if the same. Thus, for convenience in this section we adopt \cite{AAK} to the situation when we start with the compact toric variety $\PP^1 \times V$ apriori, to be able to compare it with the construction of \cite{AG}.} Note that $X^0$ is apriori described on \cite[pg 5]{AAK} as a conic bundle. However, it follows that it actually agrees with $X\setminus D$ -- see \cite[pg 16]{AAK}.

The explicit mirror construction we outline in this paper, following our work with Mark Gross using algebro-geometric tools coming from the Gross--Siebert program \cite{AG}, is in some sense both more general and in other both more special: it is more general that we can consider more than a single hypersurface, and construct mirrors to blow-ups of toric varieties along unions of many hypersurfaces. However, it is also more special as we fix the tropical types of hypersurfaces, so that the tropicalizations of the hypersurfaces we consider correspond to widgets as defined in \eqref{eq_initial}. On the other hand, in \cite{AAK}, it is allowed to consider any generic tropical type of hypersurfaces, and in particular, the mirror constructed in \cite{AAK} depends on a chosen tropical type of the hypersurface, while in \cite{AG} we apriori fix the type. In what follows, we consider the special case of the \cite{AAK} mirror where the tropical type is fixed as in \cite{AG}.

\begin{remark}
\label{Remark constant family}
Allowing the tropical type of the hypersurface to vary as in \cite{AAK}, amounts to considering a $1$-parameter of hypersurfaces $H_t$ inside $V$, which define a $1$-parameter family of complex structures on $X\setminus D$. As manifested by mirror symmetry, the complex moduli space corresponds to the K\"ahler moduli space of the mirror. Hence, the mirror of \cite{AAK} encodes the data of a choise of K\"ahler parameter, while in the construction of \cite{AG} this parameter is fixed. A particular consequence of fixing such a parameter is that the mirrors to the blow ups of toric varieties in \cite{AG}, which we explicitly write equations for in this paper, are typically singular. However, in \cite{AAK}, by varying the K\"ahler parameter, which amounts to a birational modification of the mirror, they construct a smooth mirror. Nonetheless, their construction can be carried in the situation when one considers a constant family of hypersurfaces, and in this special case we show it agrees with our construction. We expect that one can construct a $1$-parameter family of the (heart of the) canonical wall structure as in \cite{GSCanScat}, which would allow one to vary the type of hypersurfaces and produce a resolution of the mirror in the situation where one works with blow ups of toric varieties along several hypersurfaces. 
\begin{flushright}
$\Box$
\end{flushright}
\end{remark}

To define the mirror to $X^0$, denoted by $Y^0$ in \cite[Thm 1.7]{AAK}, we will first describe a toric variety $Y$ by defining its momentum polytope as the upper convex hull of a piecewise-linear (PL) function. 

Assume that dim$V=n$, and let $\Sigma_V$ denote the fan of $V$ in $\RR^n$. Let 
\begin{equation}
\label{Eq: PL fnc}
    \varphi_H: \RR^n \lra \RR
\end{equation}
be a PL function with kink $H \cdot C_{\tau}$, the intersection number of $H$ with $C_{\tau}$, across a codimension one cone $\tau$ of $\Sigma_V$ where $C_{\tau}$ is the curve in $V$ corresponding to $\tau$. As discussed in \S \ref{Sec: mirrors to toric} knowing the kinks along codimension one cones, determines a PL function only up to a linear function. To get a unique PL function, without loss of generality in what follows we assume that $\varphi_H$ is zero on a given maximal dimensional cone $\sigma_0$ of $\Sigma_V$.

The PL function $\varphi_H$ is one of the main ingredients to construct  the mirror family to blow-ups of toric varieties along hypersurfaces following our work with Mark Gross -- note that $\varphi_H$ is denoted by $\varphi_i$ in \cite[Equation $3.14$]{AG}, as in that context when we consider more than one hypersurface we keep track of them by indexing with $i$. Though apriori in \cite{AG} we use an alternative description for this function, it is shown in the proof of \cite[Theorem $3.4$]{AG} that it follows from standard toric geometry that the kinks of $\varphi_i$ agree with the kinks of $\varphi_i$ described as above, given by the intersection number of $H$ with $C_{\tau}$, across a codimension one cone $\tau$ of $\Sigma_V$.

The following proposition shows that the PL function $\varphi_H$ furthermore agrees with the PL function used in the work of Abouzaid--Aroux--Katzarkov defined in \cite[Eqgn. $3.2$]{AAK}, in the particular situation when one considers a constant family of hypersurfaces, as discussed in Remark \ref{Remark constant family} (in this case the $\rho(\alpha)$ in \cite[Eqgn. $3.2$]{AAK} are all zero). 

\begin{proposition}
Let $A$ be the set of vertices of the momentum polytope image, $\Delta_V$, of the toric variety $V$, defined using the polarization defined by the hypersurface $H\subset V$. Let $\varphi: \mathrm{Support}(\Sigma_V) = \RR^n \to \RR$ be a PL function defined by
\begin{equation}
    \varphi(\xi) = \mathrm{max} \{ \langle a, \xi \rangle ~~ | ~~ a\in A \},
\end{equation}
Then, $\varphi$ agrees with $\varphi_H$ up to a linear function. 
\end{proposition}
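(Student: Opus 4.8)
The plan is to show that the two PL functions $\varphi$ and $\varphi_H$ have the same kinks along every codimension one cone of $\Sigma_V$, and hence differ at most by a globally linear function. Recall that the statement preceding the proposition already establishes that the kinks of $\varphi_H$ along a codimension one cone $\tau$ of $\Sigma_V$ are given by the intersection number $H \cdot C_\tau$, where $C_\tau$ is the toric curve dual to $\tau$. So it suffices to compute the kinks of $\varphi$ and see that they coincide with these intersection numbers.

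First I would recall the standard dictionary between toric divisors, piecewise linear functions and polytopes. Giving the ample divisor $H$ on $V$ (which we use as polarization) is the same as giving the momentum polytope $\Delta_V \subset M_\RR$, whose normal fan is $\Sigma_V$, together with the support function $\varphi_H^{\mathrm{supp}}(\xi) = \max\{\langle a,\xi\rangle \mid a \in \Delta_V\}$, and since the maximum of a linear functional over a polytope is attained at a vertex, this equals $\max\{\langle a,\xi\rangle \mid a \in A\}$, i.e.\ exactly the function $\varphi$ in the statement. Thus $\varphi$ is the support function of the polarizing divisor $H$. The next step is the classical fact (see e.g.\ Cox--Little--Schenck, or \cite[\S2.1.2]{AG} for the conventions here) that for the support function $\varphi$ of a toric divisor $D_H = \sum_\rho a_\rho D_\rho$, where $a_\rho = -\varphi(m_\rho)$ for $m_\rho$ the primitive generator of the ray $\rho$, the kink of $\varphi$ across a codimension one cone $\tau = \rho \cap \rho'$ (with maximal cones $\sigma,\sigma'$ adjacent to $\tau$) equals the degree of $D_H$ on the toric curve $C_\tau$, that is $D_H \cdot C_\tau = H \cdot C_\tau$. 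Concretely, writing $n, n' \in \check\Lambda \otimes \RR$ for the slopes of $\varphi|_\sigma$ and $\varphi|_{\sigma'}$, one has $n' - n = (H\cdot C_\tau)\cdot \kappa$ where $\kappa$ is the primitive normal covector to $\tau$ oriented appropriately, which is precisely the defining equation \eqref{Eqn: kink} for the kink. I would spell this out using the wall-crossing/balancing relation among the primitive generators $m_\rho, m_{\rho'}$ and the generators of $\tau$.

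Having matched the kinks, I would conclude: two PL functions on the complete fan $\Sigma_V$ with identical kinks along all codimension one cones differ by an element of $\check\Lambda_\RR = \Hom(M_\RR,\RR)$, i.e.\ by a globally linear function — this is the uniqueness statement already recalled in \S\ref{Sec: mirrors to toric} and in \cite[Def.\ 1.6, Prop.\ 1.9]{GHS}. Since both $\varphi$ and $\varphi_H$ have kinks $H\cdot C_\tau$ along each $\tau$, they agree up to a linear function, which is exactly the assertion. (If one additionally normalizes both to vanish on the chosen maximal cone $\sigma_0$, they are literally equal, but the proposition only claims equality up to linear functions, matching the normalization freedom noted for $\varphi_H$.)

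I do not expect a serious obstacle here; the proposition is essentially a bookkeeping/translation statement between three equivalent encodings of an ample toric line bundle (polytope, support function, divisor with multiplicities). The only point requiring a little care is keeping the sign conventions consistent between the "upper convex hull" normalization used for momentum polytopes in \cite{AAK} and the conventions for kinks of MVPL functions used in \cite{AG} and recalled in \S\ref{sec_data}; I would address this by fixing signs once via equation \eqref{Eqn: kink} and checking on the running example of $\PP^2$ (or $\PP^3$) from Section \ref{Sec: mirrors to toric} that the kink of the support function of $\mathcal{O}(1)$ along each ray is indeed $[L]\cdot [\text{toric line}] = 1$, matching $\overline{\varphi}$ in \eqref{Eq: PL vanishing at positive octant P2}.
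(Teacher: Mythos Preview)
Your proposal is correct and follows essentially the same approach as the paper: both reduce to showing that the kinks of $\varphi$ along each codimension one cone $\tau$ equal $H\cdot C_\tau$, and both invoke standard toric geometry for this (the paper phrases it via the integral length of the corresponding edge of $\Delta_V$, which is exactly the support-function computation you outline).
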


\begin{proof}
It suffices to show that $ \varphi$ and $ \varphi_H$ have the same kinks along codimension one cones of $\Sigma_V$. Let $\tau$ be such a cone, adjacent to maximal cones $\sigma_1,\sigma_2$ of $\Sigma_V$ and let $C_{\tau}$ be the corresponding curve in $V$. It follows directly from the definition of the dual fan $\Sigma_V$ associated to $\Delta_V$, that the restriction of the PL function $\varphi$ to the maximal cones are given by the linear functions defined by
\[    \varphi_{|\sigma_1}  = \langle  \cdot , a_1  \rangle  \,\ \,\   ~~~~~~~~~~ \mathrm{and}  ~~~~~~~~~~ \,\ \,\ \varphi_{|\sigma_2}  = \langle  \cdot , a_2  \rangle  \]
where $a_1$ and $a_2$ are vertices of $\Delta_V$ corresponding to the maximal cones $\sigma_1$ and $\sigma_2$ of $\Sigma_V$ respectively. The kink of the PL function $\varphi$ along $\tau$, which by definition is the difference of the slopes of $ \varphi_{|\sigma_1}$ and $ \varphi_{|\sigma_2}$, equals the integral length of the edge with vertices $\sigma_1$ and $\sigma_2$ in $\Delta_V$. However, by standard toric geometry this integral length equals the intersection number $H \cdot C_{\tau}$. Hence, the result follows.
\end{proof}

Without loss of generality we can assume that $\varphi_H$ is zero on the maximal cone $\sigma_0$ of $\Sigma_V$, as we had assumed for $\varphi_H$. Hence, we identify the two PL functions $\varphi$ and $\varphi_H$ in the remaining part of this section.

Now, to define the mirror of $X^0$ following \cite{AAK}, one first defines the $(n+1)$-dimensional toric variety $Y$ with momentum polyope
\[ \Delta_Y = \{(\eta,\xi)\in \R \oplus \R^n\,|\, \eta \geq \varphi_H(\xi) \} \subset \R \oplus \R^n \,.\]
Let $m_1,\dots, m_r$ be primitive generators of the rays of 
$\Sigma_V$. For every $1 \leq i\leq r$, the point 
$(\varphi_H(m_i),m_i)\in \R \oplus \R^n$ belongs to 
$\Delta_Y$, and so the monomial 
$z^{(\varphi_H(m_i),m_i)}$ defines a global function $v_i$ on $Y$. 
Similarly, as $(1,0)\in \Delta_Y$, the monomial $z^{(1,0)}$ defines a global function $v_0$ on $Y$. The ring 
$\C[Y]$ of regular functions on $Y$ is generated by $v_0, v_1,\dots, v_r$,
because the vectors $(\varphi_H(m_i),m_i)$ and $(1,0)$ span the cone $\Delta_Y$.
The AAK mirror is the variety 
\begin{equation}
    \label{Eq AAK mirror}
 Y^{\circ} := Y \times \mathrm{Spec}\CC[t^{\pm E}] \setminus w_0^{-1}(0) 
\end{equation}
obtained from $Y$ by removing the hypersurface defined by the vanishing of the function $w_0:  Y \times \mathrm{Spec}\CC[t^{\pm E}]  \to \CC$ given by
\[ w_0 := -t^E+t^E v_0\,\] 
where $E$ is the class of exceptional 
$\PP^1$-fibers over $H$.

Note that, we view the mirror to the log Calabi--Yau $(X,D)$ as a family, where the complex structure can vary, hence we call the family $Y^0$ over $\CC$ defined above the mirror to $Y$, although it is natural to call a general fiber of $Y^0$ the mirror (in \cite{AAK}, $Y^0$ stands for a general fiber of the total space $Y^0 \to \CC^*$ we define above). The following main result of this sectin shows that the mirror of \cite{AAK} agrees with our mirror in \cite{AG}, when we consider the specific situation of blow ups of toric varieties along a single hypersurface.

\begin{theorem}
\label{Thm AAK agrees AG}
The restriction of the mirror family $Y\to \mathrm{Spec}\mathbf{k}[Q(X,D)]$, constructed following \cite{AG}, to the locus $\CC^* = \mathrm{Spec}\CC[t^{\pm E}] \subset \mathrm{Spec}\mathbf{k}[Q(X,D)]$ is isomorphic to the AAK mirror $Y^0$.
\end{theorem}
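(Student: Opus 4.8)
The plan is to compare the two explicit presentations of the mirror directly, since by Theorem~\ref{thm: heart} (and the single-hypersurface computation carried out in \S\ref{Sec: examples}) we already have a concrete generators-and-relations description of the restricted Gross--Siebert mirror, and the AAK side is described equally explicitly as a toric variety $Y$ with one function $w_0$ removed. First I would set up the dictionary of coordinates: on the $(X,D)$ side, take $X$ to be the blow-up of $\PP^1\times V$ along $\{0\}\times H$, identify its toric fan with the one for $\PP^1\times V$ (with primitive ray generators $\pm e_0$ for the $\PP^1$ factor and $m_1,\dots,m_r$ for $\Sigma_V$), and write down the theta functions generating $\mathcal R_{X^\vee}$ exactly as in \eqref{Eq:mirror one line}: $\vartheta_{m_i}=z^{(m_i,\overline\varphi(m_i))}=:v_i$ for the rays of $\Sigma_V$, $\vartheta_{e_0}=z^{(e_0,\cdot)}=:v_0$ for the ``upward'' $\PP^1$-ray, and $\vartheta_{-e_0}$ for the opposite ray, which by the broken-line computation in the heart picks up the factor $(1+t^{-E}\,v_0)$ from the single widget plus the kink contribution $t^{[L]}$. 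The relation generating the mirror is then $\vartheta_{e_0}\vartheta_{-e_0}\prod_i v_i^{(\cdots)} = (1+t^{-E}v_0)\,t^{[L]}\cdot(\text{monomial})$, i.e.\ the codimension-two stratum relation of the toric fan deformed by the single widget exactly as in \eqref{Eq mirror for one H}.

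Next I would restrict the base: replace $Q(X,D)$ by its quotient/localization keeping only the exceptional class, i.e.\ base change along $\CC^*=\Spec\CC[t^{\pm E}]\hookrightarrow\Spec\mathbf k[Q(X,D)]$. Concretely this sets $t^{[L]}=1$ and all other curve-class parameters to $1$ except $t^{\pm E}$; after this the relation becomes the equation of a hypersurface in a torus-times-$\CC^*$, and the only surviving nontrivial monomial in the wall-crossing factor is $t^{-E}v_0$. I would then show that the resulting affine variety is exactly $Y^0$ as in \eqref{Eq AAK mirror}. The key computation is to match the $(n+1)$-dimensional toric variety: $Y$ has momentum polytope $\{(\eta,\xi)\,|\,\eta\ge\varphi_H(\xi)\}$, and its coordinate ring is generated by $v_0=z^{(1,0)}$ and $v_i=z^{(\varphi_H(m_i),m_i)}$, with relations precisely the toric relations among the lattice points $(1,0),(\varphi_H(m_i),m_i)$ spanning the cone over $\Delta_Y$. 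These relations are literally the monomial relations encoded by the PL function $\varphi_H=\overline\varphi|_{\text{slice}}$, so the subring of $\mathcal R_{X^\vee}$ generated by $v_0,v_1,\dots,v_r$ (after the base change) is $\CC[Y]\otimes\CC[t^{\pm E}]$; and the ``remaining'' theta function $\vartheta_{-e_0}$ together with the single defining relation is exactly the statement that we have localized at the function $1+t^{-E}v_0$, i.e.\ removed the hypersurface $w_0^{-1}(0)$ where $w_0=-t^E+t^Ev_0=t^E(v_0-1)$... — here I need to be slightly careful about whether the relevant locus removed is $v_0=1$ or $1+t^{-E}v_0=0$, and reconcile the sign/shift conventions between \cite{AAK} and \eqref{Eq mirror for one H} (the ``$-1$'' versus ``$+1$'' and the $t^E$ versus $t^{-E}$ normalization). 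I would pin this down by tracking the identification of the exceptional class $E$ used in \eqref{eq_nu} with the class of an exceptional $\PP^1$-fiber over $H$ used in \cite{AAK}, and by comparing which component of $D$ the extra $\PP^1$-factor contributes.

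The main obstacle I expect is exactly this last bookkeeping step: showing that the single defining relation of the Gross--Siebert mirror, namely $\prod(\text{theta functions})=(1+t^{-E}v_0)t^{[L]}(\cdots)$, is equivalent — after the base change to $\Spec\CC[t^{\pm E}]$ and after eliminating $\vartheta_{-e_0}$ — to the assertion ``$Y\times\Spec\CC[t^{\pm E}]$ with $w_0^{-1}(0)$ removed''. In other words, one must verify that $\vartheta_{-e_0}$ is precisely the inverse of the function being inverted (up to a unit $t^{\pm E}$), so that $\Spec$ of the theta-function ring is the complement of $\{w_0=0\}$ in $Y\times\CC^*$ rather than some other birational model or a different open locus. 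This requires (i) identifying the maximal cell of $(B,\P)$ where the PL function is normalized to vanish with the cone $\sigma_0$ where $\varphi_H\equiv 0$ in \cite{AAK} — which the preceding proposition already arranges — and (ii) checking that the broken-line computation in $\foD^{\heart}_{(X,D)}$ for the asymptotic direction $-e_0$ really produces the factor $(1+t^{-E}v_0)$ with no further bending (this follows because the relevant broken lines only cross the single incoming widget wall, since the heart's non-incoming walls are trivial modulo $I_0$ by the discussion after Definition~\ref{def: heart}). Once the normalization of $E$ and the vanishing-cell are matched, the isomorphism is a direct comparison of finitely-presented $\CC[t^{\pm E}]$-algebras, and I would conclude by exhibiting the explicit ring isomorphism sending $v_i\mapsto v_i$, $v_0\mapsto v_0$ and checking it is well-defined and invertible on both sides.
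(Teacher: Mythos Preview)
Your computation of the theta functions is incorrect, and this causes the rest of the plan to break down. In the heart $\foD^{\heart}_{(X,D)}$ for $X=\mathrm{Bl}_{\{0\}\times H}(\PP^1\times V)$, the widget has direction $m_i=(\pm 1,0)$ (the $\PP^1$--axis, where $H$ sits), so after completion the walls are the cones $\R\times\tau$ for $\tau$ a codimension-one cone of $\Sigma_V$. A broken line with asymptotic direction $\pm e_0=(\pm 1,0)$ is \emph{parallel} to the $\R$--factor and hence crosses no wall: after restricting to $\Spec\CC[t^{\pm E}]$ (so all toric kinks vanish) one gets $\vartheta_{e_0}=z^{(1,0)}$ and $\vartheta_{-e_0}=z^{-(1,0)}$, both pure monomials. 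It is the broken lines with asymptotic direction $(0,m_i)$ (for $m_i$ \emph{not} a ray of the maximal cone $\sigma_0$ containing the endpoint) that must traverse the cones $\tau_j$ of $\Sigma_V$ and hence cross the walls $\R\times\tau_j$; these pick up factors and give
\[
\vartheta_{(0,m_i)}=z^{(0,m_i)}\prod_j\bigl(1+t^{-E}z^{-(1,0)}\bigr)^{\langle n_{\tau_j},m_i\rangle\,\kappa_{\tau_j}}.
\]
Your analogy with \eqref{Eq:mirror one line} is misleading: in $\PP^3$ the ray $e_4=-e_1-e_2-e_3$ crosses the widget wall because its direction has a nontrivial component normal to it, whereas $-e_0$ in $\PP^1\times V$ does not.

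Because of this, your proposed direct identification $v_i\leftrightarrow\vartheta_{(0,m_i)}$, $v_0\leftrightarrow\vartheta_{e_0}$ cannot work: the AAK generators are $v_i=z^{(0,m_i)}\prod_j(z^{(1,0)})^{\langle n_{\tau_j},m_i\rangle\kappa_{\tau_j}}$, which differ from the $\vartheta_{(0,m_i)}$ by replacing $z^{(1,0)}$ with $1+t^{-E}z^{-(1,0)}$. The paper closes exactly this gap by the automorphism of $\CC(\Z\oplus\Z^n)$ sending $z^{(1,0)}\mapsto 1+t^{-E}z^{-(1,0)}$ and fixing $z^{(0,m)}$; under it $v_i\mapsto\vartheta_{(0,m_i)}$, $v_0\mapsto 1+t^{-E}\vartheta_{-e_0}$, and $w_0^{-1}\mapsto\vartheta_{e_0}$ (up to a unit). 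So the localization at $w_0$ matches the presence of the monomial generator $\vartheta_{e_0}$, not a wall-crossing factor on $\vartheta_{-e_0}$ as you expected. Your concluding ``$v_i\mapsto v_i$, $v_0\mapsto v_0$'' map is not well-defined between the two rings.
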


\begin{proof}
We first describe $\C[Y^\circ]$ as a subalgebra of the field $\C(\Z \oplus \Z^n)$ of rational functions in the monomials $z^m$ with 
$m \in \Z \oplus \Z^n$.
Let $\varphi$ be the PL function as in \eqref{Eq: PL fnc} and let $p$ be a general point in the maximal cone 
$\sigma_0$ of $\Sigma_V$ where $\varphi_H=0$. 
Fix $1 \leq i\leq r$. The line $p+\R_{\geq 0}m_i$ intersects some number (possibly zero) of codimension one cones 
$\tau_j$ of $\Sigma_V$, with normal vectors $n_{\tau_j}$. As $\varphi_H$ has a kink $\kappa_{\tau_j}$
across each of these cones, we have 
\[ \varphi_H(m_i)=\sum_j (n_{\tau_j},m_i) \kappa_{\tau_j}\]
and so 
\begin{equation} 
\label{eq:AAK} 
v_i=z^{(\varphi_H(m_i),m_i)}=z^{(0,m_i)} \prod_j (z^{(1,0)})^{(n_{\tau_j},m_i) \kappa_{\tau_j}} \,.\end{equation}
In other words, the algebra $\C[Y^{\circ}]$ is the subalgebra of 
$\C(\Z \oplus \Z^n)$ generated by
\[ v_0=z^{(1,0)}\,,\]
\[ v_i=z^{(0,m_i)} \prod_j (z^{(1,0)})^{(n_{\tau_j},m_i) \kappa_{\tau_j}} \]
for $1 \leq i \leq r$, and 
\[ w_0' = (-t^E+t^E z^{(1,0)})^{-1} \,.\]

Next, we describe the mirror following our work \cite{AG}, using the heart of the canonical wall structure we introduced in \S\ref{Sec: the heart of the canonical wall structure}, and compute its restriction to $\CC^*=\mathrm{Spec}\CC[t^{\pm E}]$ setting all non-exceptional curve classes to zero.
In particular, all the kinks of the heart of the canonical wall structure are trivial because they are all pullback of toric curve classes.
The heart of the canonical wall structure of
$(X,D)$ lives in $\R \oplus \R^n$. For every codimension-one cone 
$\tau$ of $\Sigma_V$, we have a wall $\rho_\tau :=\R \oplus \tau$ in 
$\R \oplus \R^n$, with attached function 
\[ f_{\rho_\tau}:= (1+t^{-E}z^{-(1,0)})^{\kappa_\tau}\]
where $\kappa_\tau$ is the kink of $\varphi_H$ across $\tau$.
The ring $\mathcal{R}_{X,D}$ of regular functions on the GS/HDTV mirror of $(X,D)$
is spanned by theta functions $\vartheta_0, \vartheta_1, \dots, \vartheta_r, \vartheta_{0}'$
corresponding respectively to the rays of the fan of $\PP^1 \times V$ in 
$\R \oplus \R^n$ spanned by $(-1,0)$, $(0,m_1), \dots, (0,m_r)$, $(1,0)$.

We compute the theta function at a point 
$(\epsilon, p)\in \R \oplus \R^n$ with $\epsilon \neq 0$.
First of all, we have $\vartheta_0=z^{(-1,0)}$ and 
$\vartheta_0'=z^{(1,0)}$.
For every $1 \leq i \leq r$,
moving along the line 
$(\epsilon,p) + \R_{\geq 0}(0,m_i)$, we encounter the walls $\R \oplus \tau_j$, where the cones 
$\tau_j$ are as above. In particular, we have 
\[ \vartheta_i = z^{(0,m_i)} \prod_j (1+t^{-E} z^{-(1,0)})^{(n_{\tau_j},m_i) \kappa_{\tau_j}} \,.\]
In other words, the algebra $\mathcal{R}_{(X,D)}$
is the subalgebra of 
$\C(\Z \oplus \Z^n)$ generated by 
\[ \vartheta_0=z^{-(1,0)}\,,\]
\[ \vartheta_i = z^{(0,m_i)} \prod_j (1+t^{-E} z^{-(1,0)})^{(n_{\tau_j},m_i) \kappa_{\tau_j}} \]
for $1 \leq i \leq r$, and 
\[ \vartheta_0'=z^{(1,0)}\,.\]
Comparing the embeddings of $\C[Y^{\circ}]$ and 
$\mathcal{R}_{X,D}$ in $\C(\Z \oplus \Z^n)$, we obtain that the automorphism of 
$\C(\Z \oplus \Z^n)$ defined by 
$z^{(1,0)} \mapsto 1+t^{-E}z^{-(1,0)}$ and 
$z^{(0,m)} \mapsto z^{(0,m)}$ for every 
$m \in \Z^n$ restricts to an algebra isomorphism
\[ \Psi \colon \C[Y^0] \longrightarrow \mathcal{R}_{(X,D)}
\,,\]
such that $\Psi(v_0):= 1+t^{-E} \vartheta_0$,  $\Psi(v_i)=\vartheta_i$
for $1 \leq i\leq r$, and $\Psi(w_0')=\vartheta_0'$.
\end{proof}

\bibliographystyle{plain}
\bibliography{bibliography}
\end{document}